\newtheorem{theorem}{Theorem}
\newtheorem{proposition}{Proposition}
\newtheorem{remark}{Remark}
\newtheorem{definition}{Definition}
\newcommand{\R}{\mathbb{R}}
\newcommand{\N}{\mathbb{N}}
\newcommand{\EE}[1]{\mathbb{E}\left[{#1}\right]}
\newcommand{\EEst}[2]{\mathbb{E}\left[{#1}\  \middle| \ {#2}\right]}
\newcommand{\Ep}[2]{\mathbb{E}_{{#1}}\left[{#2}\right]}
\newcommand{\PP}[1]{\mathbb{P}\left\{{#1}\right\}}
\newcommand{\PPst}[2]{\mathbb{P}\left\{{#1}\  \middle| \ {#2}\right\}}
\newcommand{\Pp}[2]{\mathbb{P}_{{#1}}\left\{{#2}\right\}}
\newcommand{\One}[1]{{\mathbbm{1}}\left\{{#1}\right\}}
\newcommand{\one}[1]{{\mathbbm{1}}_{{#1}}}
\newcommand{\iidsim}{\stackrel{\textnormal{iid}}{\sim}}
\newcommand\independent{\protect\mathpalette{\protect\independenT}{\perp}}
\def\independenT#1#2{\mathrel{\rlap{$#1#2$}\mkern2mu{#1#2}}}
\newcommand{\eqd}{\stackrel{\textnormal{d}}{=}}
\newcommand{\Ch}{\widehat{C}}
\newcommand{\muhat}{\widehat{\mu}}
\newcommand{\mutil}{\tilde{\mu}}
\newcommand{\test}{\textnormal{test}}
\newcommand*\samethanks[1][\value{footnote}]{\footnotemark[#1]}
\title{Distribution-free inference with hierarchical data}
\author{Yonghoon Lee\thanks{Department of Statistics, University of Pennsylvania}, Rina Foygel Barber\thanks{Department of Statistics, University of Chicago}, and Rebecca Willett\samethanks\,\,\thanks{Department of Computer Science, University of Chicago}}
\date{}
\begin{document}
\maketitle

\begin{abstract}
This paper studies distribution-free inference in settings where the data set has a hierarchical structure---for example, groups of observations,
or repeated measurements. In such settings, standard notions of exchangeability may not hold. To address this challenge, a hierarchical form of exchangeability is derived, facilitating extensions of distribution-free methods, including conformal prediction and jackknife+.  While the standard theoretical guarantee obtained by 
the conformal prediction framework is a marginal predictive coverage guarantee, in the special case of independent repeated measurements, it is possible to  achieve a stronger form of 
coverage---the ``second-moment coverage'' property---to provide better control of conditional miscoverage rates, and distribution-free prediction sets that achieve this property are constructed. Simulations illustrate that this guarantee indeed leads to uniformly small conditional miscoverage rates. Empirically, this stronger guarantee comes at the cost of a larger width of the prediction set in scenarios where the fitted model is poorly calibrated, but this cost is very mild in cases where the fitted model is accurate.
\end{abstract}

\section{Introduction}
Consider a prediction problem where we have training data $\{(X_i,Y_i)\}_{i=1,2,\dots,n}$, and a new observation $X_{\test}$ for
which we need to predict its response, $Y_{\test}$. The task of \emph{distribution-free prediction} is to construct
a prediction band $\Ch$ satisfying
\begin{equation}\label{eqn:marginal_coverage_kuarantee}
\PP{Y_{\test} \in \Ch(X_{\test})} \geq 1-\alpha
\end{equation}
with respect to data $(X_1,Y_1),\dots,(X_n,Y_n),(X_{\test},Y_{\test})\iidsim P$, for \emph{any} distribution $P$. Methods such as conformal prediction \citep{vovk2005algorithmic} provide an answer to this problem whose validity does not depend on any assumptions on $P$---and indeed, validity holds more generally for any exchangeable data distribution (with i.i.d.\ data
points being an important special case).

Though the marginal coverage guarantee~\eqref{eqn:marginal_coverage_kuarantee} guarantees an overall quality of the prediction set, it is often more desired to have a useful guarantee with conditional coverage
\[\PPst{Y_{\test} \in \Ch(X_{\test})}{X_{\test}},\]
to ensure that the prediction is accurate conditional on the specific feature values of the test point. However, achieving distribution-free conditional coverage is a much more challenging problem. In case of nonatomic features (i.e., where the distribution of $X_i$
has no point masses, such as a continuous distribution),  \citet{vovk2005algorithmic,lei2014distribution,barber2021limits} prove fundamental limits on the possibility of constructing this stronger type of prediction interval---indeed, in the case of a real-valued response $Y$, any such $\Ch$ cannot have finite width.

In this work, we study a different setting where the data sampling mechanism is hierarchical, rather than i.i.d.---this builds on the work
of \citet{dunn2022distribution} (detailed below), who also studied conformal prediction in the hierarchical setting. 
We  develop an extension of conformal prediction that works for such data structure, which we denote as hierarchical conformal prediction (HCP),
to provide distribution-free marginal predictive coverage in this non-i.i.d.\ setting. As detailed later, our proposed method includes some of the methods discussed in \citet{dunn2022distribution} as special cases, while also providing a theoretically tight coverage guarantee.
Furthermore, we also explore an important special case, where our data set
contains multiple observations (multiple draws of $Y$) for each individual (for each draw of $X$); in
this setting, we propose a variant of our method, HCP$^2$,
that moves towards conditional coverage via a stronger ``second-moment coverage'' guarantee.

\subsection{Problem setting: hierarchical sampling}\label{sec:intro_setting}

In an i.i.d.\ setting, we would work with data points that are sampled as $Z_i = (X_i,Y_i)\iidsim P$ for some distribution $P$.
For example, each data point $i$ might correspond to a randomly sampled individual; we would like to ask 
how features $X_i$
can be used to predict a response $Y_i$---for instance, if we are studying academic performance for a sample of children,
$X_i$ might reflect features such as age, school quality rating, parents' income level, etc, for the $i$th child in the sample,
 and $Y_i$ is the child's test score.

In a hierarchical setting, we instead assume that the data are drawn via a hierarchical sampling procedure.
For example, if our procedure for recruiting subjects in our study involves sampling classrooms within a city,
and then sampling multiple children within each classroom, then we would 
expect some amount of dependence between subjects that are in the same classroom.
Can we still use this training data set to build a model for predicting $Y$ from $X$, for a new child
that comes from a new (previously unsampled) classroom?
In this setting, statistical procedures that rely on an i.i.d.\ sample may no longer be valid. Parametric approaches 
might address this issue by, e.g., adding a random effects component to the parametric model.
In this work, we instead seek to adapt to hierarchically sampled data within the framework of distribution-free inference.

\paragraph{Hierarchical i.i.d.\ sampling.}
To generalize the example above, suppose
 the training sample consists of $K\geq 1$ many \emph{groups} of observations, where group $k$ 
contains $N_k\geq 1$ many data points $Z_{k,1},\dots,Z_{k,N_k}$. 
We can consider a hierarchical version of the i.i.d.\ sampling assumption: first we sample the distributions and sample sizes 
that characterize each group,
\[\Pi_k \sim P_\Pi, \ N_k\mid \Pi_k \sim P_{N\mid\Pi}, \textnormal{ independently for each $k=1,\dots,K$,}\]
where $P_\Pi$ is a distribution over a measurable  space $\mathcal{P}(\mathcal{Z})$ of probability measures on $\mathcal{Z}$.
$P_{N\mid \Pi}$ is a conditional distribution on the within-group sample size $N\in \mathbb{N}=\{1,2,\dots\}$.
Then conditional on this draw, we sample observations within each group,
\[Z_{k,1},\dots,Z_{k,N_k}\mid (\Pi_k,N_k)\, \iidsim \Pi_k, \textnormal{ independently for each $k=1,\dots,K$}.\]
In this setting, we aim to provide a prediction interval\footnote{In general, predictive inference can provide a \emph{prediction set}
that may or may not be an interval---and indeed, if the response $Y$ is not real-valued, it may not be meaningful to ask for an interval
of values. However, we  continue to refer to the ``prediction interval'' for simplicity.} for the unobserved test outcome $Y_\test$ given a new feature input $X_\test$ from a new group, where the test point $Z_{\test} = (X_{\test},Y_{\test})$ is sampled as
\[\Pi_{K+1} \sim P_\Pi,\qquad \ Z_{\test} \mid \Pi_{K+1} \sim \Pi_{K+1},\]
independently of the training data.

A key observation is that this prediction task can equivalently be characterized as follows.
Imagine that we instead sample a collection of data points from $K+1$, rather than $K$,
many groups:
\begin{equation}\label{eqn:hier_iid}
\Pi_k \sim P_\Pi, \ N_k\mid \Pi_k \sim P_{N\mid \Pi}, \ Z_{k,1},\dots,Z_{k,N_k}\mid (\Pi_k,N_k)\iidsim \Pi_k,\end{equation}
independently for each $k=1,\dots,K+1$. Here groups $k=1,\dots,K$ correspond to training data.
Then our prediction task can equivalently be characterized
as the task of predictive inference for $Z_{K+1,1}$ (that is, writing $Z_{K+1,1} = (X_{K+1,1},Y_{K+1,1})$,
then based on features $X_{K+1,1}$ we need to construct a prediction interval for $Y_{K+1,1}$)---the remaining test points $Z_{K+1,2}, \dots, Z_{K+1,N_{K+1}}$ can be regarded as unobserved.

\paragraph{Defining hierarchical exchangeability.}
We can generalize the above construction to a hierarchical notion of exchangeability, which we formalize as follows.
First, let $\mathcal{Z} = \mathcal{X}\times \mathcal{Y}$ (where $\mathcal{X}$ and $\mathcal{Y}$ denote the spaces
in which the features $X$ and responses $Y$ lie, respectively), and define
\[\tilde{\mathcal{Z}} = \mathcal{Z}\cup \mathcal{Z}^2 \cup\mathcal{Z}^3\cup \dots,\]
the set of \emph{all sequences of any finite length} with entries lying in $\mathcal{Z}$. Here, $\mathcal{Z}^j$ denotes the set of length-$j$ vectors whose elements belong to $\mathcal{Z}$, for $j = 1, 2, \ldots$.
With this definition in place, we can define $\tilde{Z}_k = (Z_{k,1},\dots,Z_{k,N_k}) \in\tilde{\mathcal{Z}}$,
the $k$th group of observations within our sample. For any $\tilde{z}\in\tilde{\mathcal{Z}}$, we also define
$\textnormal{length}(\tilde{z})\in\mathbb{N}$ as the length of the finite sequence $\tilde{z}$.

Recall that the standard definition of exchangeability for a data set $(Z_1,\dots,Z_{n+1})$, is given by
\begin{equation}\label{eqn:define_exch}(Z_1,\dots,Z_{n+1}) \eqd (Z_{\sigma(1)},\dots,Z_{\sigma(n+1)})\textnormal{ for all $\sigma\in\mathcal{S}_{n+1}$}.\end{equation}
Hierarchical exchangeability extends this condition as follows.

\begin{definition}[Hierarchical exchangeability]\label{def:hier_exch}
Let $\tilde{Z}_1,\dots,\tilde{Z}_{K+1} \in \tilde{\mathcal{Z}}$ be a sequence of random variables with a joint distribution.
We say that this sequence (or equivalently, its distribution) satisfies \emph{hierarchical exchangeability}
if, first, 
\[(\tilde{Z}_1,\dots,\tilde{Z}_{K+1}) \eqd (\tilde{Z}_{\sigma(1)},\dots,\tilde{Z}_{\sigma(K+1)})\]
 for all $\sigma\in\mathcal{S}_{K+1}$,
and, second, 
\[(\tilde{Z}_1,\dots,\tilde{Z}_k,\dots,\tilde{Z}_{K+1}) \eqd (\tilde{Z}_1,\dots,(\tilde{Z}_{k,\sigma(1)},\dots,\tilde{Z}_{k,\sigma(m)}),\dots,\tilde{Z}_{K+1}) \mid \textnormal{length}(\tilde{Z}_k)=m\]
 for all $k=1,\dots,K+1$, all  $m\geq 1$ for which $\PP{\textnormal{length}(\tilde{Z}_k)=m}>0$, and
all $\sigma\in\mathcal{S}_m$.
\end{definition}
\noindent (Here $\mathcal{S}_r$ denotes the set of permutations on $\{1,\dots,r\}$, for any $r\in\mathbb{N}$.) In other words, hierarchical exchangeability requires that, first, the groups are exchangeable with each other, and second,
the observations within each group are exchangeable as well. The hierarchical exchangeability condition generalizes the hierarchical i.i.d. assumption and allows for application to broader settings. For example, in a medical context, each data point may correspond to a patient, while groups represent different hospitals. The hierarchical exchangeability condition accounts for scenarios where patients within the same hospital may share similarities or have dependent data---for instance, due to shared environmental factors from staying in the same facility---and also accommodates situations where observations from different hospitals are not independent, such as when there are shared influences like the spread of a disease or the development of treatments.

\paragraph{Special case: equal group sizes.} If the group sizes are all equal, $N_1 = \dots = N_{K+1}=: N$, then the definition of hierarchical exchangeability can be interpreted in another way: the distribution of the data points $(Z_{1,1},\dots,Z_{1,N},\dots,Z_{K+1,1},\dots,Z_{K+1,N})$ is invariant to all permutations $\sigma\in\mathcal{S}_{(K+1)N}$ that preserve groups (that is, any two data values belonging to the same group, should again belong to the same group after permutation). This type of exchangeability is an instance of the methodology developed by work such as \citet{chernozhukov2018exact}, which consider predictive inference for distributions that are invariant to certain subgroups of permutations. In general, however, when the vectors of observations from each group may be of different lengths (as in Definition~\ref{def:hier_exch}), hierarchical exchangeability cannot be represented via a subgroup of permutations.

\paragraph{Special case: repeated measurements.} Another special case of interest is the setting where we have data with repeated measurements. In this setting, given an unknown distribution $P$ on $\mathcal{Z}=\mathcal{X}\times\mathcal{Y}$, writing $P_X$ and $P_{Y|X}$ as the corresponding marginal and conditional distribution,
if for example we assume each batch of measurements has a common fixed size $N$, we draw the training data as
\begin{equation}\label{eqn:intro_repeated}\begin{cases} X_k\sim P_X, \\ Y_{k,1},\dots,Y_{k,N}\mid X_k\iidsim P_{Y|X},\end{cases}\end{equation}
independently for each $k=1,\dots,K$. The test point is then given by an independent draw $(X_{\test},Y_{\test})\sim P$.
Equivalently, we can formulate the problem as sampling $K+1$, rather than $K$, many groups from the distribution~\eqref{eqn:intro_repeated},
with groups $k=1,\dots,K$ comprising the training data, and with $(X_{K+1}, Y_{K+1,1})$ determining the test point.
By defining $Z_{k,i} = (X_k,Y_{k,i})$, we can see that this construction satisfies the hierarchical exchangeability property given in Definition~\ref{def:hier_exch} (and indeed, can be viewed as a special case of the hierarchical-i.i.d.\ construction
given in~\eqref{eqn:hier_iid}, by taking $\Pi_k$ to be the distribution of $(X,Y)$ conditional on $X=X_k$).

\paragraph{Contributions.}  The main contributions of the present work can be summarized as follows. First, 
for hierarchical data $\tilde{Z}_1,\dots,\tilde{Z}_{K+1}$ satisfying the hierarchical exchangeability 
condition in Definition~\ref{def:hier_exch}, 
with $\tilde{Z}_1,\dots,\tilde{Z}_K$ comprising the training data while $\tilde{Z}_{K+1,1}$ is the target test point,
we provide a predictive inference method (HCP) that
offers a marginal coverage guarantee. Moreover,
for the special case of repeated measurements,
we provide a predictive inference method (HCP$^2$)
that offers a stronger notion of distribution-free validity (moving towards conditional, rather than marginal, coverage).

\subsection{Related work}
Distribution-free inference has received much attention recently in the statistics and machine learning literature,
with many methodological advances but also many results examining
the limits of the distribution-free framework.

In the standard setting with exchangeable (e.g., i.i.d.) data, conformal prediction (see \citet{vovk2005algorithmic, lei2018distribution} for background) provides a universal framework for distribution-free prediction. Split conformal prediction \citep{vovk2005algorithmic, papadopoulos2002inductive} offers a variant of conformal prediction that relies on data splitting to reduce the computational cost, but with some loss of statistical accuracy. \citet{barber2021predictive} introduces the jackknife+, which provides a valid distribution-free prediction set without data splitting while having a more moderate computational cost than full conformal prediction.

Applications of the conformal prediction framework to settings beyond the framework of exchangeability have also been studied in the recent
 literature. For example, \citet{tibshirani2019conformal} and \citet{podkopaev2021distribution} introduce extensions of conformal prediction that deal with covariate shift and label shift, respectively, while \citet{barber2022conformal} allow for unknown deviations from exchangeability.  \citet{xu2021conformal} considers applications to time series data, and \citet{lei2015conformal} considers functional data.

For the goal of having a useful bound for conditional coverage in the distribution-free setting, \citet{vovk2012conditional} and \citet{barber2021limits} show impossibility results for coverage conditional on $X_{\test}$ in the case that the feature distribution is nonatomic. \citet{izbicki2019flexible} and~\citet{chernozhukov2021distributional} propose methods that provide  conditional coverage guarantees under strong additional assumptions. Conformalized quantile regression, introduced by~\citet{romano2019conformalized}, can provide improved control of conditional miscoverage rates empirically, by accounting for heterogeneous spread of $Y$ given $X$.

Problems other than prediction have been studied as well in the distribution-free framework. \citet{barber2020distribution} and \citet{medarametla2021distribution} discuss limits on having a useful  inference on conditional mean and median, respectively. \citet{lee2021distribution} prove that for discrete feature distributions, we can make use of repeated feature observations to attain meaningful inference for the conditional mean.

Distribution-free inference for the hierarchical data setting was previously studied by \citep{dunn2022distribution}, proposing a range of different methods
for the hierarchical setting. We  describe their methods and results in detail in Section~\ref{sec:Dunn}.
See also~\citet{duchi2024predictive} for extensions of predictive inference methods in hierarchical settings.
The setting of repeated measurements is  explored by \citet{cheng2022many}, who contrast empirical risk minimization when the repeated measurements are aggregated prior to estimation and when they are kept separate. Our contributions complement their work with a framework for conformal prediction. Like \citet{cheng2022many}, we find that non-aggregated repeated measurements can enable
 a more informative statistical analysis. Finally, as mentioned above, the methodology we develop is related to the idea of invariance of the distribution with respect to certain sets of permutations as studied by \citet{chernozhukov2018exact}, in the special case of equal-size groups.\footnote{The work of \citet{dobriban2025symmpi}, which is more recent than this present paper, generalizes this idea to distributions that are invariant under a group action, and points out that our definition of hierarchical exchangeability is a special case of their approach.}

\subsection{Outline}
The remainder of this paper is organized as follows. In Section~\ref{sec:HCP}, we first give background on existing
methods for the hierarchical setting \citep{dunn2022distribution}, and then present our proposed method, Hierarchical Conformal Prediction (HCP),
and compare HCP with the existing methods on simulated data.
In Section~\ref{sec:HCP2}, we turn to the special case of data with repeated measurements; we examine the problem
of conditionally valid coverage in this setting, and propose a variant of our method, HCP$^2$, that offers a stronger coverage guarantee,
which we verify with simulations. Section~\ref{sec:discussion} offers a short discussion of our findings and of open questions raised by this work.
The proofs of our theorems, together with some additional methods and theoretical results, are deferred to the Appendix.

\section{Hierarchical conformal prediction}\label{sec:HCP}

In this section, we give a short introduction to the well-known split conformal method for i.i.d.\ (or exchangeable) data,
and then propose our new method, hierarchical conformal prediction (HCP), which adapts split conformal to the setting
of hierarchically structured data.

\subsection{Background: split conformal}
First, for background, consider the setting of i.i.d.\ or exchangeable data, $Z_1,\dots,Z_n,Z_{n+1}$,
where $Z_i=(X_i,Y_i)$ for each $i$. Here $Z_1,\dots,Z_n$ is the training data, while $Z_{\test}=Z_{n+1}$ is the test point---with only $X_{n+1}$ observed.
The \emph{split conformal prediction} method \citep{vovk2005algorithmic}
constructs a distribution-free prediction interval as follows. As a preliminary step, we split the training data into two subsets of size $n_0 + n_1 = n$, where, without loss of generality, the first $n_0$ data points are used for training, and the remaining $n_1$ data points are used for calibration. Once the data is split, the first step is to use the training portion of the data (i.e., $Z_1,\dots,Z_{n_0}$), we fit
a \emph{score function} 
\[s: \mathcal{Z}\rightarrow\R,\]
where $s(z) = s(x,y)$ measures the extent to which a data point $(x,y)$ \emph{conforms} to the trends observed in the training data,
with large values indicating a more unusual value of the data point. For example, if the response variables $Y$
lie in $\mathcal{Y}=\R$, one mechanism for defining this function
is to first run a regression on the labeled data set $\{(X_i,Y_i):i=1,\dots,n_0\}$, to produce a fitted model $\widehat{\mu}:\mathcal{X}\rightarrow\R$,
and then define $s(x,y) = |y - \widehat{\mu}(x)|$, the absolute value of the residual. 
Next, the second step is to use the calibration set (i.e., $Z_{n_0+1},\dots,Z_n$) to determine a cutoff threshold for the score, 
and construct the corresponding prediction interval:
\begin{equation}\label{eqn:define_C_hat_splitCP}\Ch(X_{\test}) = \left\{y\in\mathcal{Y} : s(X_{\test},y) \leq Q_{1-\alpha}\left(\sum_{i=n_0+1}^n \frac{1}{n_1+1}\delta_{s(Z_i)} + \frac{1}{n_1+1}\delta_{+\infty}\right)\right\},\end{equation}
where $\delta_t$ denotes the point mass at $t$, while $Q_{1-\alpha}(\cdot)$ denotes the $(1-\alpha)$-quantile of a distribution.\footnote{Formally, for a distribution $P$ on $\R$,
the quantile is defined as $Q_{1-\alpha}(P) = \inf\{t\in\R: \Pp{T\sim P}{T \leq t} \geq 1-\alpha\}$.}
\sloppypar \citet{vovk2005algorithmic} establish marginal coverage for the split conformal method:
if the data $Z_1,\dots,Z_n,Z_{\test}$ satisfies exchangeability (that is, if the property~\eqref{eqn:define_exch}
holds when we take $Z_{\test}=Z_{n+1}$),
then
\[\PP{Y_{\test}\in\Ch(X_{\test})}\geq 1-\alpha.\]
We note that this guarantee is marginal---in particular, it does not hold if we were to condition on the test features $X_{\test}$ (and also, 
holds only on average over the draw
of the training and calibration data $Z_1,\dots,Z_n$).

In the setting of i.i.d.\ or exchangeable data, the full conformal prediction~\citep{vovk2005algorithmic} provides
an alternative method that avoids the loss of efficiency incurred by sample splitting, at the cost of a greater computational burden.
As a compromise between split and full conformal, cross-validation type methods are proposed by \citet{vovk2015cross,vovk2018cross,barber2021predictive}. We give background on these alternative
methods in Appendix.

\subsection{Proposed method: HCP} 
Now we return to the hierarchical data setting that is the problem of interest in this paper.
Can split conformal prediction be applied to this setting? That is, given the observed data points 
$Z_{1,1},\dots,Z_{1,N_1}, \dots, Z_{K,1},\dots,Z_{K,N_K}$, can we simply apply split conformal to this training data
set to obtain a valid prediction interval? In fact, the hierarchical exchangeability property (Definition~\ref{def:hier_exch})
is not sufficient.
In general,
the data may satisfy hierarchical exchangeability without satisfying the (ordinary, non-hierarchical) exchangeability property~\eqref{eqn:define_exch}
that is needed for split conformal to have theoretical validity---for example, 
if the data consists of students sampled from $K$ many classrooms, then there may be higher correlation between
students in the same classroom.
 An exception is the trivial special case where $N_1=\dots=N_K=1$, 
i.e., all groups contain exactly one observation, in which case hierarchical exchangeability simply reduces to ordinary exchangeability (and thus standard conformal prediction would ensure marginal predictive coverage at the desired level). At the other extreme, if the group sizes are highly imbalanced, and the predictive model's accuracy differs widely across groups, then we might expect that standard conformal prediction could fail to achieve the desired coverage level.

We now introduce our extension of split conformal prediction for the hierarchical data setting. 
Given training data $\tilde{Z}_1,\dots,\tilde{Z}_K$, we split it into $K_0+K_1$ many groups, with $K_0$ groups used for training
and $K_1=K-K_0$ groups for calibration. First, using data $\tilde{Z}_1,\dots,\tilde{Z}_{K_0}$, we fit a score function $s:\mathcal{Z}\rightarrow\R$
(as before, a canonical choice is to use the absolute value of the residual for some fitted model, $s(z) = s(x,y) = |y-\widehat{\mu}(x)|$
for some fitted model $\widehat{\mu}$). Next, we use the calibration set to set a threshold $T$ for the prediction interval, as follows:
\begin{multline}\label{eqn:define_HCP}\textnormal{HCP method: \quad }\Ch(X_{\test})= \{y\in\mathcal{Y} : s(X_{\test},y) \leq T\}, \textnormal{ where }\\
T = Q_{1-\alpha}\left(\sum_{k=K_0+1}^K\sum_{i=1}^{N_k}\frac{1}{(K_1+1)N_k}\cdot\delta_{s(Z_{k,i})}+\frac{1}{K_1+1}\cdot\delta_{+\infty}\right).\end{multline}
Compared to the original split conformal interval constructed in~\eqref{eqn:define_C_hat_splitCP}, we see
that the constructions follow the same format, but the score threshold for HCP  in general is different,
since this new construction places different amounts of weight on different training data points, depending on the sizes
of the various groups. However, as a special case, if $N_k=1$ for all $k$ (i.e., one observation in each group), then HCP
simply reduces to the split conformal method.

The HCP prediction interval offers the following distribution-free guarantee:
\begin{theorem}[Marginal coverage for HCP]\label{thm:HCP}
Suppose that $\tilde{Z}_1,\dots,\tilde{Z}_{K+1}$ satisfies the hierarchical exchangeability property
(Definition~\ref{def:hier_exch}), and suppose we run HCP with training data $\tilde{Z}_1,\dots,\tilde{Z}_{K_0}$, calibration data $\tilde{Z}_{K_0+1},\dots,\tilde{Z}_{K}$, and test 
point $Z_{\test}=(X_{\test},Y_{\test})=\tilde{Z}_{K+1,1}$. Then
\[\PP{Y_{\test}\in\Ch(X_{\test})}\geq 1-\alpha.\]
Moreover, if the scores $s(Z_{k,i})$ are distinct almost surely,\footnote{In fact, we  see in the proof that it suffices
to assume that $s(Z_{k,i})\neq s(Z_{k',i'})$ for all $k\neq k'$, almost surely---in other words, we can allow non-distinct scores within a group $k$, as long
as scores from different groups are always distinct.} it additionally holds that
\[\PP{Y_{\test}\in\Ch(X_{\test})} \leq 1-\alpha+\frac{2}{K_1+1},\]
where $K_1 = K - K_0$.
\end{theorem}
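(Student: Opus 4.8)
The plan is to condition on the training groups $\tilde Z_1,\dots,\tilde Z_{K_0}$ — which fixes the score function $s$ — and then exploit the two layers of hierarchical exchangeability that survive this conditioning among the remaining $n:=K_1+1$ groups: at the group level $(\tilde Z_{K_0+1},\dots,\tilde Z_{K+1})$ is exchangeable, and within each group the observations are exchangeable. Write $\tilde S_\ell=\{s(Z_{\ell,i})\}_{i=1}^{N_\ell}$ for the multiset of scores of group $\ell$; the HCP threshold $T$ is a permutation-symmetric function of the $K_1$ calibration multisets $\tilde S_{K_0+1},\dots,\tilde S_K$, and the relevant test score is $s(Z_{K+1,1})$. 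First I would use within-group exchangeability of the test group: conditioning additionally on the tuple of all $n$ multisets leaves $s(Z_{K+1,1})$ distributed uniformly over $\tilde S_{K+1}$, so the conditional miscoverage probability equals the fraction of $\tilde S_{K+1}$ lying strictly above $T$. Next I would use group-level exchangeability (together with permutation-symmetry of the threshold) to symmetrize over which of the $n$ groups plays the role of the test group: for each $\ell$, $\PP{s(Z_{K+1,1})>T}=\PP{s(Z_{\ell,1})>T_{-\ell}}$, where $T_{-\ell}$ is the HCP threshold built from the multisets of the other $n-1$ groups; averaging over $\ell$ and again invoking within-group exchangeability yields
\[
\PP{Y_\test\notin\Ch(X_\test)}
=\EE{\frac{1}{K_1+1}\sum_{\ell=K_0+1}^{K+1}\frac{1}{N_\ell}\,\bigl|\{\,v\in\tilde S_\ell:v>T_{-\ell}\,\}\bigr|},
\]
where the inner expression is evaluated with the score function $s$ learned from the training groups.

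At this point the problem becomes a deterministic combinatorial inequality about $n$ arbitrary finite multisets $m_1,\dots,m_n$ of reals. Calling a value $v\in m_\ell$ \emph{bad} when $v>T_{-\ell}$, with $T_{-\ell}$ the HCP threshold formed from $\{m_{\ell'}\}_{\ell'\ne\ell}$, I would aim to show
\[
\alpha-\frac{2}{K_1+1}\ \le\ \frac{1}{K_1+1}\sum_{\ell}\frac{|\{\,v\in m_\ell:v\text{ bad for }\ell\,\}|}{|m_\ell|}\ \le\ \alpha ,
\]
where the left-hand inequality additionally assumes the scores are distinct (this is precisely where the distinctness hypothesis of the second claim is used, and one checks that only cross-group distinctness, as in the footnote, is actually needed). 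The theorem's lower bound $1-\alpha$ then follows from the right inequality, which needs no distinctness, and its upper bound $1-\alpha+\tfrac{2}{K_1+1}$ from the left inequality.

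To prove the combinatorial inequality I would first translate badness through the definition of the $(1-\alpha)$-quantile of a finitely supported distribution: since the point mass at $+\infty$ contributes nothing below the (finite) value $v$, the event $v>T_{-\ell}$ is equivalent to
\[
\sum_{\ell'\ne\ell}\frac{|\{\,w\in m_{\ell'}:w<v\,\}|}{|m_{\ell'}|}\ \ge\ (1-\alpha)(K_1+1),
\]
i.e.\ the total group-normalized mass of scores from the other groups strictly below $v$ is at least $(1-\alpha)(K_1+1)$. The key structural observation is that this left-hand side is nondecreasing in $v$, so within each group the bad values are exactly the $b_\ell$ largest. For the upper bound I would take the group $\ell^*$ whose smallest bad value $v^*$ is smallest over all groups: every bad value of every group is then $\ge v^*$, so the total weighted mass of bad values is at most the weighted mass of $\{v\ge v^*\}$, and evaluating the displayed inequality at $v^*$ bounds the latter by $\alpha(K_1+1)-1+b_{\ell^*}/|m_{\ell^*}|\le\alpha(K_1+1)$. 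The lower bound is the mirror-image argument applied to the group whose largest non-bad value is largest. I expect this combinatorial lemma to be the main obstacle, chiefly the bookkeeping around ties — strict versus non-strict inequalities in $|\{w<v\}|$ and in the quantile — which is what forces the distinctness assumption for the second inequality; the exchangeability reductions of the first part are more routine, though they do require care to confirm that $T_{-\ell}$ is unaffected by permutations within group $\ell$ and that the within-group uniformity of $s(Z_{\ell,1})$ persists after conditioning on all the multisets.
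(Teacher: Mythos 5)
Your proposal is correct, and while its probabilistic half is the same as the paper's (condition on the training groups to fix $s$, then use the surviving group-level and within-group exchangeability to symmetrize the test group against the calibration groups), its deterministic core is genuinely different. The paper works with a single \emph{augmented} weighted quantile $q_{1-\alpha}(\tilde Z_{K_0+1},\dots,\tilde Z_{K+1})$ computed from all $K_1+1$ groups: exchangeability turns $\PP{s(Z_{K+1,1})\le q_{1-\alpha}}$ into the expected weighted fraction of all scores at or below this common threshold, which is bounded between $1-\alpha$ and $1-\alpha+\tfrac{1}{K_1+1}$ by elementary properties of weighted quantiles, and the HCP threshold $T$ is then sandwiched as $q_{1-\alpha}\le T\le q_{1-\alpha'}$ with $\alpha'=\alpha-\tfrac{1}{K_1+1}$, which is where the factor $\tfrac{2}{K_1+1}$ arises. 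You instead keep the actual HCP threshold and symmetrize over which group is designated as test, producing leave-one-group-out thresholds $T_{-\ell}$ and reducing the theorem to a counting lemma: the group-weighted fraction of calibration-plus-test scores exceeding their own leave-one-out threshold lies in $[\alpha-\tfrac{2}{K_1+1},\alpha]$, proved by your extremal argument (smallest globally bad value for the upper bound, largest globally good value for the lower bound, with cross-group distinctness used exactly where the footnote says). I checked the lemma and the quantile-to-counting translation (strictly-below mass at least $(1-\alpha)(K_1+1)$, with the $+\infty$ atom never counting), and both are sound; in fact your mirror argument loses only one unit of mass, so it yields the slightly sharper upper bound $1-\alpha+\tfrac{1}{K_1+1}$, of which the stated $1-\alpha+\tfrac{2}{K_1+1}$ is a weakening. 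What each approach buys: the paper's augmented-quantile sandwich reuses a standard quantile fact and carries over almost verbatim to the HCP$^2$ proof, while your leave-one-out counting is self-contained, avoids introducing the augmented quantile, gives a marginally tighter constant, and mirrors the jackknife+-style bad-set arguments the paper uses for its Theorems 3 and 4, at the cost of the tie/strictness bookkeeping you correctly identify as the delicate point.
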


In Appendix, we provide analogous hierarchical versions of the jackknife+ and full conformal methods, which enables inference without sample splitting.

\begin{remark}
    If a few observations from the test group are also available and the goal is to make inference on a new sample from that group, one can apply HCP using a data-dependent score, without requiring sample splitting. For example, suppose all the group sizes are equal to $N$, and we observe $M$ outcomes in the test group. Then instead of splitting the data and using one to construct a score function, we can construct the following hierarchically exchangeable scores:
    \[\left(\left|Y_{k,1} - \frac{1}{M}\sum_{l=1}^M Y_{k,N-M+l}\right|,\left|Y_{k,2} - \frac{1}{M}\sum_{l=1}^M Y_{k,N-M+l}\right|,\cdots, \left|Y_{k,N-M} - \frac{1}{M}\sum_{l=1}^M Y_{k,N-M+l}\right|\right)_{1 \leq k \leq K}.\]
    Then the prediction set can be constructed as in~\eqref{eqn:define_HCP}, with 
    \[s(X_\test,y) = \left|y - \frac{1}{M}\sum (\text{observed outcomes from the test group})\right|.\]
    A discussion of other potential methods for this setting can be found in~\citet{dunn2022distribution}.
    
\end{remark}

\subsection{Comparison with \texorpdfstring{\citet{dunn2022distribution}}{}'s methods}\label{sec:Dunn}
In this section, we summarize the work of \citet{dunn2022distribution} for the hierarchical data setting and compare
to our proposed HCP method.
\citet{dunn2022distribution} introduce four approaches for distribution-free inference with hierarchical data, which they refer
to as Pooling CDFs, Double Conformal, Subsampling Once, and Repeated Subsampling.
Here we only consider the split conformal versions of their methods, in the context of the supervised learning 
problem (i.e., predicting $Y$ from $X$), to be directly comparable to the setting of our work.

In the remainder of this section, we rewrite their methods to be consistent with our notation and problem setting for ease of comparison. As we detail below, the comparison between HCP and  \citet{dunn2022distribution}'s four methods reveals:\footnote{Note, however, that the assumption in this work---hierarchical exchangeability---differs from the setting considered in \citet{dunn2022distribution}, and thus a direct comparison between the methods is not entirely appropriate.}
\begin{itemize}
\item \citet{dunn2022distribution}'s theoretical guarantee for Pooling CDFs offers only an asymptotic result. In Proposition~\ref{prop:pooling_cdfs}, we show that Pooling CDFs is actually exactly equivalent to running HCP but with a slightly
higher error level $\alpha$, and consequently, establish a novel finite-sample coverage guarantee.
\item Double Conformal was shown by  \citet{dunn2022distribution}  to have finite-sample coverage at level $1-\alpha$,
but in practice is substantially overly conservative.
\item Subsampling Once was also shown  by  \citet{dunn2022distribution}  to have finite-sample coverage at level $1-\alpha$,
but in practice can lead to more variable performance due to discarding most of the calibration data.
\item \citet{dunn2022distribution}'s theoretical guarantee for Repeated Subsampling establishes coverage at the weaker level $1-2\alpha$, even though empirical performance shows that it typically achieves $1-\alpha$ coverage. In our work (Proposition~\ref{prop:repeated_subsampling}), we establish
that  Repeated Subsampling is equivalent to running HCP on a bootstrapped version of the original calibration data set, and consequently, obtain
a novel guarantee
of  finite-sample coverage at level $1-\alpha$.
\end{itemize}
Overall, our proposed framework of hierarchical exchangeability allows us to build a new understanding of the finite-sample performance
of two of \citet{dunn2022distribution}'s methods, Pooling CDFs and Repeated Subsampling---that is, the two whose empirical performance was observed to be the best, without high variance or overcoverage.

As for HCP,
for all of \citet{dunn2022distribution}'s methods the first step is to use the training portion of the data (groups $k=1,\dots,K_0$) 
to train a score function $s:\mathcal{Z}\rightarrow\R$, and the next step is to define a prediction interval
$\Ch(X_{\test}) = \left\{y\in\mathcal{Y}:s(X_{\test},y) \leq T\right\}$, for some threshold $T$ determined by the calibration
set. We  next give details for how $T$ is selected for each of their proposed methods.

\subsubsection{Pooling CDFs}
This method estimates the conditional cumulative distribution function (CDF)
of the scores within each group in the calibration set, and then averages these CDFs
to estimate the distribution of the test point score. For each calibration group $k=K_0+1,\dots,K$,
write the empirical CDF for group $k$ by
\[\widehat{F}_k(t) = \frac{1}{N_k}\sum_{i=1}^{N_k}\One{s(Z_{k,i}) \leq t},\]
and then define the pooled (or averaged) CDF as 
\[\widehat{F}_{\text{pooled}}(t) = \frac{1}{K_1}\sum_{k=K_0+1}^K \widehat{F}_k(t).\]
The threshold $T$ is then taken to be the $(1-\alpha)$-quantile of this pooled CDF,
\[T = \inf\left\{ t: \widehat{F}_{\text{pooled}}(t) \geq 1-\alpha\right\}.\]
\citet{dunn2022distribution} show that this method provides an asymptotic $(1-\alpha)$  coverage guarantee
as $K_1\rightarrow\infty$, but there is no finite-sample guarantee.
Here, we show that our HCP framework allows for a stronger, finite-sample guarantee,
simply by reinterpreting the method as a version of HCP.
\begin{proposition}\label{prop:pooling_cdfs}
The Pooling CDFs method of \citet{dunn2022distribution}  is equivalent to running HCP with $\alpha' =\alpha + \frac{1-\alpha}{K_1 + 1}$ in place of $\alpha$.
Consequently, under hierarchical exchangeability
(Definition~\ref{def:hier_exch}), the Pooling CDFs method satisfies
\[\PP{Y_{\test}\in\Ch(X_{\test})}\geq 1-\alpha -  \frac{1-\alpha}{K_1 + 1}.\]
Moreover, if the scores $s(Z_{k,i})$ are distinct almost surely,
it additionally holds that
\[\PP{Y_{\test}\in\Ch(X_{\test})} \leq 1-\alpha + \frac{1+\alpha}{K_1 + 1}.\]
\end{proposition}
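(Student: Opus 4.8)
The plan is to recognize the Pooling CDFs threshold as the HCP threshold run at an inflated error level $\alpha'$, after which both conclusions follow immediately from Theorem~\ref{thm:HCP}. First I would unwind the pooled CDF: one has $\widehat{F}_{\text{pooled}}(t) = \sum_{k=K_0+1}^{K}\sum_{i=1}^{N_k}\frac{1}{K_1 N_k}\One{s(Z_{k,i})\le t}$, which is exactly the CDF of the probability measure $\mu := \sum_{k=K_0+1}^{K}\sum_{i=1}^{N_k}\frac{1}{K_1 N_k}\,\delta_{s(Z_{k,i})}$ on $\R$ (it has total mass one, since $\sum_{i=1}^{N_k}\frac{1}{K_1 N_k}=\frac{1}{K_1}$ for each of the $K_1$ calibration groups). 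Hence the Pooling CDFs threshold is $T_{\text{pool}} = Q_{1-\alpha}(\mu)$ in the notation of the paper's quantile definition.

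Next I would rewrite the HCP threshold. Running HCP at level $\alpha'$ uses $Q_{1-\alpha'}(\nu)$, where $\nu := \sum_{k=K_0+1}^{K}\sum_{i=1}^{N_k}\frac{1}{(K_1+1)N_k}\,\delta_{s(Z_{k,i})}+\frac{1}{K_1+1}\,\delta_{+\infty}$. Comparing weights with those defining $\mu$ gives the identity $\nu = \frac{K_1}{K_1+1}\,\mu + \frac{1}{K_1+1}\,\delta_{+\infty}$, and since $\delta_{+\infty}$ puts no mass on $(-\infty,t]$ for finite $t$, the CDFs satisfy $F_\nu(t)=\frac{K_1}{K_1+1}F_\mu(t)$ for all $t\in\R$. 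The level $\alpha'=\alpha+\frac{1-\alpha}{K_1+1}$ is precisely the one for which $1-\alpha' = \frac{K_1}{K_1+1}(1-\alpha)$, so that for $t\in\R$ we have $F_\nu(t)\ge 1-\alpha'$ iff $F_\mu(t)\ge 1-\alpha$, and therefore $Q_{1-\alpha'}(\nu) = \inf\{t\in\R:F_\nu(t)\ge 1-\alpha'\} = \inf\{t\in\R:F_\mu(t)\ge 1-\alpha\} = Q_{1-\alpha}(\mu) = T_{\text{pool}}$. I would also note here that these infima are over nonempty sets attained at finite points: $\mu$ is supported on the finitely many real scores $s(Z_{k,i})$, so $F_\mu$ already equals $1$ at $t=\max_{k,i}s(Z_{k,i})<\infty$, and $F_\nu$ equals $\frac{K_1}{K_1+1}\ge 1-\alpha'$ there, so the atom at $+\infty$ never affects the quantile in question.

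Since the score function and the final step $\Ch(X_{\test})=\{y:s(X_{\test},y)\le T\}$ are common to both methods, the two computations above show that Pooling CDFs produces exactly the prediction set of HCP run with $\alpha'$ in place of $\alpha$. Applying the lower bound of Theorem~\ref{thm:HCP} at level $\alpha'$ then gives $\PP{Y_{\test}\in\Ch(X_{\test})}\ge 1-\alpha' = 1-\alpha-\frac{1-\alpha}{K_1+1}$, and applying its upper bound (valid under the distinct-scores assumption) gives $\PP{Y_{\test}\in\Ch(X_{\test})}\le 1-\alpha'+\frac{2}{K_1+1} = 1-\alpha+\frac{1+\alpha}{K_1+1}$, which are the two claimed bounds.

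I do not expect a substantive obstacle here; the only point needing care is the quantile bookkeeping in the second step---tracking the renormalization factor $\frac{K_1}{K_1+1}$ created by appending the point mass at $+\infty$, matching it to $1-\alpha'$, and confirming that this atom is irrelevant to the relevant quantile so that the manipulations ``for finite $t$'' are legitimate.
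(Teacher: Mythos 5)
Your proposal is correct and follows essentially the same route as the paper's proof: rewriting the Pooling CDFs threshold as the HCP quantile at level $\alpha'$ (using $1-\alpha' = \tfrac{K_1}{K_1+1}(1-\alpha)$) and then invoking Theorem~\ref{thm:HCP}. Your extra bookkeeping about the atom at $+\infty$ and finiteness of the infimum is a slightly more careful spelling-out of the same identity the paper verifies in one displayed chain of equalities.
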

\begin{proof}
Examining the definition of $\widehat{F}_{\textnormal{pooled}}$, we can see that the threshold $T$ for Pooling CDFs can equivalently
be written as
\begin{align*}
T&= \inf\left\{t\in\R : \sum_{k=K_0+1}^K\sum_{i=1}^{N_k}\frac{1}{K_1N_k}\cdot\one{s(Z_{k,i})\leq t} \geq 1-\alpha\right\}\\
&= \inf\left\{t\in\R : \sum_{k=K_0+1}^K\sum_{i=1}^{N_k}\frac{1}{(K_1+1)N_k}\cdot\one{s(Z_{k,i})\leq t} \geq 1-\alpha'\right\}\\
&= Q_{1-\alpha'}\left(\sum_{k=K_0+1}^K\sum_{i=1}^{N_k}\frac{1}{(K_1+1)N_k}\cdot\delta_{s(Z_{k,i})}+\frac{1}{K_1+1}\cdot\delta_{+\infty}\right),
\end{align*}
where the second equality holds since $1-\alpha' = \frac{K_1}{K_1+1}\cdot (1-\alpha)$.
Note that the last expression coincides with the definition of HCP with $\alpha'$ in place of $\alpha$.
The remaining results follow directly from Theorem~\ref{thm:HCP}.
\end{proof}
This result implies the asymptotic result of \citet{dunn2022distribution} (since as $K_1\rightarrow\infty$,
this coverage guarantee approaches $1-\alpha$), and so 
this strengthens the existing result. 
Note, however, that the asymptotic result for Pooling CDFs does not require exchangeability of the group sizes $N_k$,
so \citet{dunn2022distribution}'s assumptions are slightly weaker than for the finite-sample guarantee of HCP.

\subsubsection{Double Conformal}
\citet{dunn2022distribution}'s second approach, Double Conformal, is designed for the special case where all group sizes are equal, $N_k\equiv N$.
It works by  quantifying the within-group uncertainties for each group and then combining them to calculate a threshold $T$ for the prediction set. 
Specifically, define
\[q_k = Q_{1-\alpha/2}\left(\sum_{i=1}^N\frac{1}{N+1}\delta_{s(Z_{k,i})} + \frac{1}{N+1}\delta_{+\infty}\right)\]
as the $(1-\alpha/2)$-quantile of the scores within group $k$ (with a small weight placed on $+\infty$ to act as
a correction for finite sample size---note that this is equivalent to running
split conformal~\eqref{eqn:define_C_hat_splitCP} at coverage level $1-\alpha/2$ 
for the data in the $k$th group). Then define $\Ch(X_{\test}) =  \left\{y\in\mathcal{Y}:s(X_{\test},y) \leq T\right\}$
for a threshold $T$ defined as
\[T = Q_{1-\alpha/2}\left(\sum_{k=K_0+1}^{K}\frac{1}{K_1 + 1}\delta_{q_k} + \frac{1}{K_1+1}\delta_{+\infty}\right).\]
\citet{dunn2022distribution} show that the Double Conformal method provides finite-sample marginal coverage
at level $1-\alpha$ (unlike Pooling CDFs which offers only asymptotic coverage). 
However, in practice, the method is often substantially overly conservative---this is because the coverage
guarantee is proven via a union bound, with total error rate $\alpha$ obtained
by summing an $\alpha/2$ within-group error bound plus an $\alpha/2$ across-group error bound. 

\subsubsection{Subsampling Once}
Next, \citet{dunn2022distribution} propose Subsampling Once, an approach that avoids the issue of hierarchical exchangeability
by subsampling a single observation from each group, thus yielding a data set that satisfies (ordinary, non-hierarchical) exchangeability.\footnote{While
\citet{dunn2022distribution} present this method as a modification of full conformal, here for consistency of the comparison across different methods,
we use its split conformal version.}
Specifically, let $i_k$ be drawn uniformly from $\{1,\dots,N_k\}$, for each $k=K_0+1,\dots,K$. Then 
the subset of the calibration data given by $Z_{K_0+1,i_{K_0+1}},\dots,Z_{K,i_K}$, together with the test point $Z_{\test}$,
 now forms an exchangeable data set, and thus ordinary split conformal can be applied.
The prediction set is then defined with the threshold
\[T = Q_{1-\alpha}\left(\sum_{k=K_0+1}^K \frac{1}{K_1+1}\delta_{s(Z_{k,i_k})} + \frac{1}{K_1+1}\delta_{+\infty}\right).\]

 \citet{dunn2022distribution} show that, due to the exchangeability of this data subset, this procedure again offers 
 finite-sample marginal coverage
at level $1-\alpha$. Moreover, unlike the Double Conformal method, this last method is no longer overly conservative in practice.
As for Pooling CDFs, we note that this method does not require exchangeability of the group sizes $N_k$,
so the assumptions are slightly weaker than for the finite-sample guarantee of HCP.
However, the prediction interval constructed by the Subsampling Once method may be highly variable because much
of the available calibration data is discarded.

\subsubsection{Repeated Subsampling}
 To alleviate the problem of discarding data in the Subsampling Once method,
  \citet{dunn2022distribution} also provide an alternative method that makes use of multiple repeats of this procedure (Repeated Subsampling),
 and aggregating the outputs to provide a single prediction interval.
For each $b=1,\dots,B$,  randomly
choose a data point $i_k^{(b)}\in\{1,\dots,N_k\}$ for each group $k=K_0+1,\dots,K$, so that 
$\{Z_{K_0+1,i_{K_0+1}^{(b)}},\dots,Z_{K,i_K^{(b)}}\}$ is the subsampled calibration data set for the $b$th run of Subsampling Once.
To aggregate the $B$ many runs together,
\citet{dunn2022distribution} reformulate the Subsampling Once procedure in terms of a p-value,
rather than a quantile, and then average the p-values. 
To do this, define the p-value for the $b$th run of Subsampling Once as
\[P^{(b)}(z) =\frac{1 + \sum_{k=K_0+1}^K \one{s(Z_{k,i_k^{(b)}}) \geq s(z)}}{K_1+1}.\]
Then it holds that the
prediction interval for the $b$th run of Subsampling Once can equivalently be defined as $\Ch^{(b)}(X_{\test}) = \{y : P^{(b)}(X_{\test},y) >\alpha\}$, the set of values $y$ whose p-value is \emph{not} sufficiently
small to be rejected at level $\alpha$.

With this equivalent formulation in place, we are now ready to define the aggregation: 
 the prediction interval for Repeated Subsampling is given by
\[\Ch(X_{\test}) = \{y : \bar{P}(X_{\test},y) >\alpha\}\textnormal{ where }\bar{P} =  \frac{1}{B}\sum_{b=1}^B P^{(b)}(z).\]
This averaging strategy avoids the loss of information incurred by taking only a single subsample; however, \citet{dunn2022distribution}'s
 finite-sample theory for the 
Repeated Subsampling method only ensures $1-2\alpha$, rather than $1-\alpha$, as the marginal
coverage level. 
(The factor of 2 arises from the fact that an average of valid p-values is, up to a factor of 2, a valid p-value \citep{vovk2020combining}.) 

Next, we will see that using our HCP framework leads to a stronger finite-sample guarantee.
Specifically, we now
derive an equivalent formulation
to better understand the relationship of this method to HCP.
By examining the definition of $\bar{P}$, we can verify that
Repeated Subsampling
 is equivalent to taking $\Ch(X_{\test}) =  \left\{y\in\mathcal{Y}:s(X_{\test},y) \leq T\right\}$ where the threshold $T$
is defined as
\[T =  Q_{1-\alpha}\left(\sum_{b=1}^B \sum_{k=K_0+1}^K \frac{1}{B(K_1+1)}\delta_{s(Z_{k,i^{(b)}_k})} + \frac{1}{K_1+1}\delta_{+\infty}\right).\]
If the number of repeats $B$ is extremely large, 
we would expect that, within group $k$, the selected index $i^{(b)}_k$ is equal to each $i=1,\dots,N_k$ in roughly equal proportions---that is,
\[\sum_{b=1}^B  \frac{1}{B}\delta_{s(Z_{k,i^{(b)}_k})}  \approx  \sum_{i=1}^{N_k}\frac{1}{N_k}\delta_{s(Z_{k,i})} .\]
Consequently, as $B\rightarrow\infty$ in the Repeated Subsampling
method,
we would expect that the threshold $T$ constructed by ``Repeated sampling''
is approximately equal to the threshold $T$ constructed by HCP~\eqref{eqn:define_HCP}, and so the resulting prediction intervals should be approximately the same.
In other words, for large $B$, we can view the Repeated Subsampling method as an approximation to HCP.

In fact, we can actually derive a stronger result: under hierarchical exchangeability, Repeated Subsampling
offers a theoretical guarantee of coverage at level $1-\alpha$, rather than the weaker $1-2\alpha$ result in existing work
(However, this new result  requires exchangeability of the group sizes $N_k$, which the $1-2\alpha$ coverage result does not). Details are deferred to Appendix.

\subsection{Simulations}\label{section:simulation_1}

We next present simulation results to illustrate the comparison between 
HCP and the methods proposed by \citet{dunn2022distribution}.\footnote{Code to reproduce this simulation is available at \url{https://github.com/rebeccawillett/Distribution-free-inference-with-hierarchical-data}.}

\paragraph{Score functions.} A common score for conformal prediction is the absolute value of the residual,
\[
s(x,y) = \big|y - \widehat{\mu}(x)\big|,\]
where $\widehat{\mu}$ is a model fitted on the first portion of the training data (i.e., groups $k=1,\dots,K_0$, with the calibration set held out)---that is,
$\widehat{\mu}(x)$ is an estimate of $\EEst{Y}{X=x}$.
This score leads to prediction intervals of the form $\Ch(X_{\test}) = \widehat{\mu}(X_{\test})\pm T$, for some
threshold $T$ selected using the calibration set. We will use this score for this first simulation.

\afterpage{
\begin{figure}[H]
\centering
\begin{subfigure}[t]{\textwidth}
    \centering
    \includegraphics[width=0.75\textwidth]{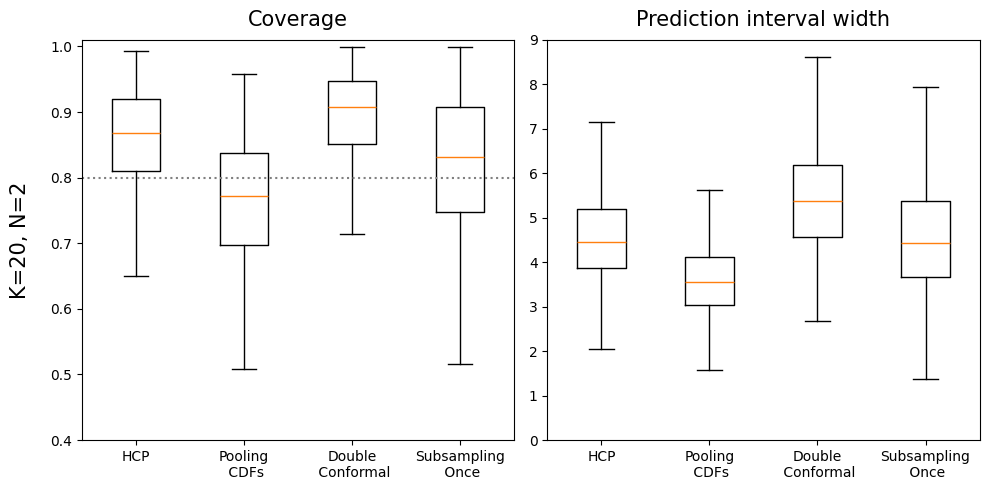}
\end{subfigure}

\vspace{0.5em}

\begin{subfigure}[t]{\textwidth}
    \centering
    \includegraphics[width=0.75\textwidth]{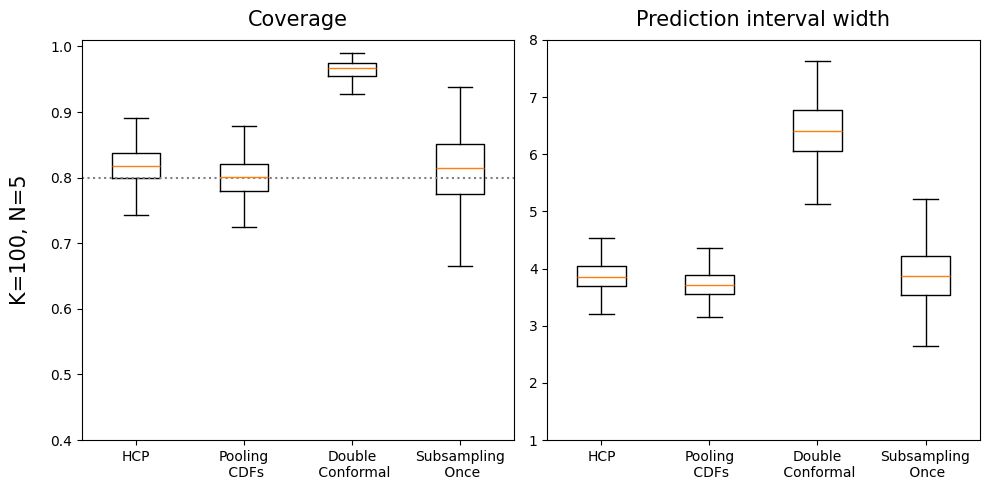}
\end{subfigure}

\caption{Coverage and prediction interval width of hierarchical conformal prediction (HCP), compared
with methods proposed by \citet{dunn2022distribution}, for $K=20,N=2$ (top row) and $K=100,N=5$ (bottom row). The plots show results over 500 independent trials,
where for each trial the result is averaged over a test set of 1000 independent points.}
\label{fig:HCP_simulation}
\end{figure}

\begin{table}[H]
\begin{center} {\small
\begin{tabular}{llll}
\hline
& & Coverage & Prediction interval width\\
\hline
\multirow{4}{*}{$K=20,N=2$}&HCP & 0.8562 (0.0037) & 4.5582 (0.0453)  \\
&Pooling CDFs \citep{dunn2022distribution} & 0.7620  (0.0045) & 3.6016 (0.0348)  \\
&Double Conformal \citep{dunn2022distribution} & 0.8937 (0.0032) & 5.4658 (0.0538) \\
&Subsampling Once \citep{dunn2022distribution} & 0.8151 (0.0051) & 4.5645 (0.0572)  \\\hline
\multirow{4}{*}{$K=100,N=5$}&HCP & 0.8177 (0.0013) & 3.8743 (0.0120)   \\
&Pooling CDFs \citep{dunn2022distribution} & 0.8002 (0.0013) & 3.7221 (0.0113)  \\
&Double Conformal \citep{dunn2022distribution} & 0.9645 (0.0006) & 6.4085 (0.0231)  \\
&Subsampling Once \citep{dunn2022distribution} & 0.8100 (0.0025) & 3.9017 (0.0222)  \\\hline
\end{tabular} }
\end{center}
\caption{Mean coverage and mean prediction interval widths of hierarchical conformal prediction (HCP), compared
with methods proposed by \citet{dunn2022distribution}, with standard errors in parentheses. Results are averaged over 500 independent trials.}\label{table:marginal_coverage}
\label{turns}
\end{table}}

\paragraph{Data and methods.}
The data for this simulation is generated as follows:
independently for each group $k=1,\dots,K$, we set $N_k \equiv N$ and draw
\begin{align*}
& \alpha_k, \beta_k \iidsim \mathcal{N}(0,1),\quad \left(\begin{array}{c}X_{k,1}\\\vdots\\X_{k,N}\end{array}\right) 
\ \,\Big\vert\, \  \alpha_k \sim \mathcal{N}\left(\left(\begin{array}{c}\alpha_k\\ \vdots\\ \alpha_k \end{array}\right), \mathbf{I}_N\right),\\
& \left(\begin{array}{c}Y_{k,1}\\\dots\\Y_{k,N}\end{array}\right)\ \,\Big\vert\, \ \left(\begin{array}{c}X_{k,1}\\\dots\\X_{k,N}\end{array}\right), \alpha_k,\beta_k \sim  N\left(\left(\begin{array}{c}\beta_k+\mu(X_{k,1})\\ \dots \\ \beta_k+ \mu(X_{k,N})\end{array}\right), \mathbf{I}_N\right),
\end{align*}
where the true conditional mean function is given by
\[\mu(x) = 1+x+0.1\cdot x^2,\] 
and $\mathbf{I}_N$ denotes the $N$-dimensional identity matrix. We can think of $\alpha_k$ and $\beta_k$ as random, group-specific effects.

We run the experiment for the following choices of $K$ (the number of groups) and $N$ (the number of observations within each group):
\[K=20,N=2, \quad K=100, N=5.\]
The target coverage level is set to be 80\%, i.e., $\alpha=0.2$.
The training portion of the data (i.e., groups $k=1,\dots,K_0$, for $K_0=K/2$) 
is used to produce an estimate
$\widehat{\mu}(x)$ of the conditional mean function via linear regression (specifically, we run least squares
on the data set $\{(X_{k,i},Y_{k,i}) : k=1,\dots,K_0, \  i=1,\dots,N\}$, ignoring the hierarchical structure of the data),
to define the score $s(x,y) = |y - \widehat\mu(x)|$ as described above.
We then use the calibration set (i.e., $K_1=K/2$ many groups $k=K_0+1,\dots,K$) to 
choose the threshold $T$ that defines the prediction interval, for all methods, as detailed in the sections above.
We compare HCP with \citet{dunn2022distribution}'s Pooling CDFs, Double Conformal, and Subsampling Once methods.
(We do not compare with Repeated Subsampling since, as explained in Section~\ref{sec:Dunn},
the Repeated Subsampling method with large $B$ can essentially be interpreted as an  approximation to 
HCP in this split conformal setting.)

\paragraph{Results.}
The results for all methods are shown in  Figure~\ref{fig:HCP_simulation}. Here, we generate the training set consisting of $K$ groups of size $N$,
together with a test set of $n_{\test}=1000$ independent draws $(X_{\test},Y_{\test})$, each drawn from the same distribution 
as the data points $(X_{k,i},Y_{k,i})$ in the training set.
For each of the four methods, we calculate the mean coverage rate,
\begin{equation}\label{eqn:plot_coverage}\frac{1}{1000}\sum_{i=1}^{1000}\one{Y_{\test,i}\in\Ch(X_{\test,i})},\end{equation}
and mean prediction interval width,
\begin{equation}\label{eqn:plot_width}\frac{1}{1000}\sum_{i=1}^{1000}\textnormal{length}(\Ch(X_{\test,i})).\end{equation}
The box plots in Figure~\ref{fig:HCP_simulation} show the distribution of these two measures of performance,
across 500 independent trials.
The overall average coverage for each method is also shown in Table~\ref{table:marginal_coverage}.

Comparing the results for the various methods, we see that the empirical results confirm 
our expectations as described above. In particular, among \citet{dunn2022distribution}'s methods, we see 
that Pooling CDFs works quite well but
shows some  undercoverage for small $K$ (as expected, since we have seen in Proposition~\ref{prop:pooling_cdfs}
that it is equivalent to HCP with a slightly higher value of $\alpha$);
that Double Conformal tends to provide over-conservative prediction sets (with coverage level $>1-\alpha$, and consequently,
substantially wider prediction intervals); and that Subsampling Once provides the correct coverage level but suffers from higher
variability due to discarding much of the data---to be specific, the method's performance on the test set can change dramatically when we resample
the training and calibration data, since the output of the method relies on a substantially reduced calibration set size for each trial.
HCP is free from these issues, and tends to show good empirical performance
 while also enjoying a theoretical finite-sample guarantee.

\section{Distribution-free prediction with repeated measurements}\label{sec:HCP2}

We now return to the setting of repeated measurements, introduced earlier in Section~\ref{sec:intro_setting}.
The training data consisting of i.i.d.\ samples of the features $X$ accompanied by repeated i.i.d.\ draws of the response $Y$,
\begin{equation}\label{eqn:model_repeated}\begin{cases} X_k\sim P_X, \\ N_k\mid X_k \sim P_{N|X}, \\ Y_{k,1},\dots,Y_{k,N_k}\mid (X_k,N_k)\iidsim P_{Y|X},\end{cases}\end{equation}
independently for each $k=1,\dots,K$,
and the test point is a new independent draw from the $(X,Y)$ distribution,
\begin{equation}\label{eqn:model_repeated_test_point}X_{\test}\sim P_X, \ Y_{\test}\mid X_{\test} \sim P_{Y|X}.\end{equation}
(This construction generalizes the model introduced earlier in~\eqref{eqn:intro_repeated}, because here we allow for potentially different numbers of repeated measurements for each $k$.)

Equivalently, we can draw from the model~\eqref{eqn:model_repeated} $K+1$ times, with $k=1,\dots,K$ denoting training data, 
and with test
point $(X_{\test},Y_{\test})=(X_{K+1},Y_{K+1,1})$ (without observing $Y_{K+1,2},\dots,Y_{K+1,N_{K+1}}$).
This model satisfies the definition of hierarchical exchangeability (Definition~\ref{def:hier_exch}), and therefore, the HCP
method defined in~\eqref{eqn:define_HCP} is guaranteed to offer marginal coverage at level $1-\alpha$, as in Theorem~\ref{thm:HCP}. 
Since the training data is not i.i.d.\ (and not exchangeable) due to the repeated measurements, this result is again novel relative to existing
split conformal methodology.

Thus far, we have only shown that HCP allows us to restore marginal predictive coverage \emph{despite} the repeated
measurements---that is, we can provide the same marginal guarantee for the repeated measurement setting,
as was already obtained by split conformal for the i.i.d.\ data setting. 
On the other hand, the presence of repeated measurements actually carries valuable
information---for instance, we can directly estimate the conditional variance of $Y|X$ at each $X=X_k$ for which we have multiple
draws of $Y$. This naturally leads us to ask whether we can use this special structure
to \emph{enhance} the types of guarantees it is possible to achieve---that is, can we expect a stronger inference result that achieves guarantees beyond the usual marginal coverage guarantee?
This is the question that we address next.

\subsection{Toward inference with conditional coverage guarantees}

In the setting of i.i.d.\ data (i.e., without repeated measurements), 
when the features $X$ have a continuous or nonatomic distribution, \citet{vovk2005algorithmic,lei2014distribution,barber2021limits} establish
that conditionally-valid predictive inference (that is, guarantees of the form $\PPst{Y_{\test} \in \Ch(X_{\test})}{X_{\test}}\geq 1-\alpha$) are impossible
to achieve with finite-width prediction intervals. 
However, access to repeated measurements may provide additional information that allows us to move beyond this impossibility
result. 
In this section, we show that access to repeated measurements allows us to go beyond the marginal coverage guarantee and enables control of the conditional coverage.

Consider the miscoverage rate conditional on all the observations we have---both the observed training and calibration data, $\mathcal{D} = \{(X_k,Y_{k,1},\dots,Y_{k,N_k})\}_{k=1,\dots,K}$, and the test feature vector, $X_{\test}$. Specifically, define
\[\alpha_{\mathcal{D}}(x) = \PPst{Y_{\test} \notin \Ch(X_{\test})}{\mathcal{D};X_{\test}=x},\]
where the probability is taken with respect to the distribution of the test response $Y_{\test}$ (which is drawn from $P_{Y|X}$ at $X_{\test}=x$).\footnote{Note
 that the random variable $\alpha_{\mathcal{D}}(X_{\test})$ is therefore the coverage conditional on both the observed
 training and calibration data $\mathcal{D}$, and 
test feature $X_{\test}$, meaning that we are now examining an even stronger form of conditional coverage. However,
it is well known that conditioning on $\mathcal{D}$ is straightforward for split conformal prediction \citep{vovk2012conditional},
that is, the primary challenge in bounding $\alpha_{\mathcal{D}}(X_{\test})$ comes from conditioning on $X_{\test}$.}
 The marginal coverage guarantee
\[\PP{Y_{\test} \notin \Ch(X_{\test})} \leq \alpha\]
can then equivalently be expressed as
\begin{equation}\label{eqn:guarantee_marginal}
\EE{\alpha_{\mathcal{D}}(X_{\test})} \leq \alpha,
\end{equation}
by the tower law.

For the goal of controlling conditional miscoverage, an ideal target would be
\begin{equation}\label{eqn:guarantee_ideal}
\alpha_{\mathcal{D}}(X_{\test}) \leq \alpha \text{ almost surely.}
\end{equation}
This condition, if achieved, would  ensure that the resulting prediction set provides a good coverage for any value of new input $X_{\test}$, rather than only on average over $X_{\test}$. However,
even in the setting of repeated measurements, if $X$ is continuously distributed (or more generally, nonatomic, i.e., $P_X(x) =0$ for all $x\in\mathcal{X}$), the guarantee~\eqref{eqn:guarantee_ideal} is not achievable by any nontrivial (finite-width) prediction interval. 

Consequently, we would like to find a coverage target that is stronger than the basic marginal guarantee~\eqref{eqn:guarantee_marginal}
but weaker than the unachievable conditional guarantee~\eqref{eqn:guarantee_ideal}.
As a compromise between the two,
we consider the following guarantee, which we  refer to as \emph{second-moment coverage}:
\begin{equation}\label{eqn:guarantee_stronger}
\EE{\alpha_{\mathcal{D}}(X_{\test})^2} \leq \alpha^2.
\end{equation}
This condition is a compromise between the previous two: second-moment coverage~\eqref{eqn:guarantee_stronger}
is strictly weaker than the (unattainable) conditional coverage guarantee~\eqref{eqn:guarantee_ideal},
and strictly stronger than the marginal coverage guarantee~\eqref{eqn:guarantee_marginal}.

Another way of understanding the guarantee~\eqref{eqn:guarantee_stronger} as a mechanism for controlling conditional miscoverage is to look at the tail probability. By Markov's inequality, for any constant $c > 1$, the marginal coverage guarantee leads to
\[\PP{\alpha_{\mathcal{D}}(X_{\test}) > c\alpha} \leq \frac{\EE{\alpha_{\mathcal{D}}(X_{\test})}}{c\alpha} = \frac{1}{c},\]
while the stronger guarantee~\eqref{eqn:guarantee_stronger} leads to a stricter bound of $1/c^2$ since it implies
\[\PP{\alpha_{\mathcal{D}}(X_{\test}) > c\alpha} = \PP{\alpha_{\mathcal{D}}(X_{\test})^2 > c^2\alpha^2} \leq \frac{\EE{\alpha_{\mathcal{D}}(X_{\test})^2}}{c^2\alpha^2} = \frac{1}{c^2},\]
Hence, we can expect more uniformly small conditional miscoverage rate from the second-moment coverage guarantee.

\subsection{Proposed method for second-moment coverage: \texorpdfstring{HCP$^2$}{}}
Now we discuss how the stronger guarantee~\eqref{eqn:guarantee_stronger} can be achieved in the distribution-free sense,
in the setting of repeated measurements. 
We propose a modification of our HCP method, which we call HCP$^2$---this name refers to the fact that the
 new goal is second-moment coverage.
(As before, here we aim to extend the split conformal method to address this goal;
extensions of other methods---namely, full conformal and jackknife+---are deferred to the Appendix.)

To begin, let $K_1^{\geq 2} = \sum_{k=K_0+1}^K \one{N_k\geq 2}$, the number of feature obesrvations in the calibration set
for which we have at least two repeated measurements.
\begin{multline}\label{eqn:define_HCP2}\textnormal{HCP$^2$ method: \quad }\Ch(X_{\test})= \{y\in\mathcal{Y} : s(X_{\test},y) \leq T\}, \textnormal{ where }\\
T =Q_{1-\alpha^2}\left(\sum_{\substack{k=K_0+1,\dots,K\\ N_k\geq 2}}\sum_{i=1,\dots,N_k}\frac{N_k - i}{(K_1^{\geq 2} +1){N_k\choose 2}}\cdot\delta_{s_{k,(i)}}+\frac{1}{K_1^{\geq 2}+1}\cdot\delta_{+\infty}\right),\end{multline}
where $s_{k,(1)}\leq \dots \leq s_{k,(N_k)}$ are the order statistics of the scores $s(Z_{k,1}),\dots,s(Z_{k,N_k})$ in the $k$th calibration
group.

To understand the intuition behind this construction, we can first verify that the threshold $T$ in this definition
can equivalently be written as
\[T =Q_{1-\alpha^2}\left(\sum_{\substack{k=K_0+1,\dots,K\\ N_k\geq 2}}\sum_{1 \leq i < i' \leq N_k}\frac{1}{(K_1^{\geq 2} +1){N_k\choose 2}}\cdot\delta_{\min\{s(Z_{k,i}),s(Z_{k,i'})\}}+\frac{1}{K_1^{\geq 2}+1}\cdot\delta_{+\infty}\right).\]
With this equivalent definition in place, we can now explain 
the main idea behind the construction of the HCP$^2$ method.
For intuition, suppose $Y\mid X$ has a continuous distribution.
For each group $k$, since the $Y_{k,i}$'s are i.i.d.\ conditional on $X=X_k$,
if $q_{1-a}$ is equal to the $(1-a)$-quantile of the distribution of $s(X,Y)$ conditional on $X=X_k$,
then for constructing HCP (and proving marginal coverage), we observe that $\PP{s(X_k,Y_{k,i})> q_{1-a}}=a$
by construction. 
But, if we have at least two measurements $1\leq i<i'\leq N_k$ at this particular $k$,
then we can also see that
$\PP{\min\{s(X_k,Y_{k,i}),s(X_k,Y_{k,i'})\}> q_{1-a}} = a^2$. In other words, by examining the pairwise minimums
across the scores at each value of $X$, we can learn about $\alpha_{\mathcal{D}}(X)^2$, rather than learning only about $\alpha_{\mathcal{D}}(X)$.

The following theorem verifies that the HCP$^2$ method achieves second-moment coverage---with the caveat
that the coverage guarantee is with respect to a reweighted distribution on $X_{\test}$.
In particular,
let $p_{\geq 2} = \PP{N\geq 2}>0$ denote the probability of at least two repeated measurements when drawing a new data point,
and let $P_X^{\geq 2}$ denote the conditional distribution of $X$,
when conditioning on the event $N\geq 2$ under the joint model $(X,N)\sim P_X\times P_{N|X}$.
\begin{theorem}[Second-moment coverage for HCP$^2$]\label{thm:HCP2}
Assume the training data is drawn from the i.i.d.\ model with repeated measurements~\eqref{eqn:model_repeated},
independently for $k=1,\dots,K$, and that the test point $(X_{\test},Y_{\test})$ is drawn
independently from the model~\eqref{eqn:model_repeated_test_point}. Then
\[\Ep{P_X^{\geq 2}}{\alpha_{\mathcal{D}}(X_{\test})^2} \leq \alpha^2.\]
Moreover, if all scores $s(Z_{k,i})$ are distinct almost surely, it additionally holds that
\[\Ep{P_X^{\geq 2}}{\alpha_{\mathcal{D}}(X_{\test})^2} \geq \alpha^2 - \frac{2}{(K_1+1)p_{\geq 2}}.\]
\end{theorem}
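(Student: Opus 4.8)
The plan is to mimic the proof of Theorem~\ref{thm:HCP} but at the level of \emph{pairs} of within-group scores, so that the relevant exchangeable object is the multiset of pairwise minima rather than the multiset of individual scores. First I would condition on the training data $\tilde{Z}_{[K_0]}$ (which fixes the score function $s$), and restrict attention to those calibration groups with $N_k \geq 2$ together with the test group: among these there are $K_1^{\geq 2}$ calibration groups, plus the test group $K+1$ (which contributes only if $N_{K+1}\geq 2$, but we will see the test point only uses $Z_{K+1,1}=(X_{\test},Y_{\test})$ and we will artificially pair it against one ``hypothetical'' second draw). The key probabilistic identity, stated in the text, is that for a fixed $x$ with $Y\mid X=x$ drawn i.i.d., and writing $q_{1-a}(x)$ for the $(1-a)$-quantile of the conditional score distribution, one has $\PP{\min\{s(x,Y),s(x,Y')\} > q_{1-a}(x)} = a^2$ when $Y,Y'$ are independent copies. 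This is what converts a statement about $\alpha_{\mathcal{D}}(X_{\test})^2$ into a statement about a minimum of two scores, and it is why we take the $(1-\alpha^2)$-quantile rather than the $(1-\alpha)$-quantile in~\eqref{eqn:define_HCP2}.

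Next I would set up the exchangeability argument. For each group $k$ with $N_k\geq 2$, the $\binom{N_k}{2}$ pairwise minima $\min\{s(Z_{k,i}),s(Z_{k,i'})\}$, $1\le i<i'\le N_k$, are a function of the exchangeable within-group scores; averaging $\delta$ over these pairs (weight $1/\binom{N_k}{2}$) is the natural group-level summary. The construction~\eqref{eqn:define_HCP2} puts equal mass $1/(K_1^{\geq 2}+1)$ on each such calibration-group summary plus mass $1/(K_1^{\geq 2}+1)$ on $+\infty$, which is exactly the HCP quantile construction applied to ``groups'' whose elements are pairwise minima, at level $1-\alpha^2$. The crucial observation is that across the $K_1^{\geq 2}$ qualifying calibration groups and the test group, the group-level objects are exchangeable (this follows from hierarchical exchangeability / the i.i.d.\ hierarchical model, after conditioning on which groups have $N_k\ge 2$, which is an exchangeable event). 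Then the standard conformal argument gives that the rank of the test quantity among the $K_1^{\geq 2}+1$ values is uniform, yielding $\PP{s(X_{\test},Y_{\test}) \text{ and } s(X_{\test},Y'_{\test}) \text{ both} > T} \le \alpha^2$ where $Y'_{\test}$ is an independent copy; after conditioning on $\mathcal{D}$ and $X_{\test}$ this is precisely $\alpha_{\mathcal{D}}(X_{\test})^2 \le$ the conditional version, and integrating over the reweighted distribution $P_X^{\geq 2}$ (the conditional law of $X$ given $N\ge 2$, which is the relevant law because only groups with a second measurement enter the calibration object) gives $\Ep{P_X^{\geq 2}}{\alpha_{\mathcal{D}}(X_{\test})^2}\le\alpha^2$.

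For the lower bound (the ``moreover'' clause), under the assumption that all scores are distinct almost surely, the rank of the test group's summary is \emph{exactly} uniform on $\{1,\dots,K_1^{\geq 2}+1\}$, so the inflation over $\alpha^2$ is the usual $O(1/(K_1^{\geq 2}+1))$ correction coming from the discreteness of the quantile and the extra point mass at $+\infty$. I would then relate $K_1^{\geq 2}$ back to $K_1$: since each calibration group independently has $N_k\ge 2$ with probability $p_{\ge 2}$, a deviation/Jensen-type bound gives $\EE{1/(K_1^{\geq 2}+1)} \le 1/((K_1+1)p_{\ge 2})$ (or a comparable bound), which produces the stated $-\frac{2}{(K_1+1)p_{\ge 2}}$ term; the factor $2$ absorbs both the $1/(K_1^{\geq 2}+1)$ quantile-discreteness slack and the expectation-of-reciprocal bound. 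The main obstacle I anticipate is the bookkeeping around the test group: we only ever observe $Z_{K+1,1}$, so we must argue that $\alpha_{\mathcal{D}}(X_{\test})^2$ really equals the probability that two \emph{independent} fresh draws of the test score both exceed $T$, and that this ``virtual pair'' at the test point is exchangeable with the genuine pairwise minima from the calibration groups — this requires the i.i.d.-within-group structure of model~\eqref{eqn:model_repeated} (not just hierarchical exchangeability), which is exactly why Theorem~\ref{thm:HCP2} is stated for the repeated-measurements model rather than in full generality. A secondary technical point is handling the conditioning on the random set $\{k : N_k\ge 2\}$ carefully so that the exchangeability of the qualifying groups (including whether the test group qualifies) is preserved; taking expectations over $N_{K+1}$ and using $p_{\ge 2}>0$ is where the reweighting to $P_X^{\ge 2}$ and the factor $1/p_{\ge 2}$ in the lower bound enter.
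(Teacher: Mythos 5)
Your proposal follows essentially the same route as the paper's proof: condition on the training data and on the set of qualifying groups, apply the HCP exchangeability argument to the within-group pairwise minima at level $1-\alpha^2$, identify the event that two independent draws of the test response both fall outside $\Ch(X_{\test})$ with $\alpha_{\mathcal{D}}(X_{\test})^2$, and obtain the lower bound from distinctness of scores plus $\EE{1/(K_1^{\geq 2}+1)}\leq 1/((K_1+1)p_{\geq 2})$ for $K_1^{\geq 2}\sim\textnormal{Binomial}(K_1,p_{\geq 2})$. The ``virtual pair'' bookkeeping you flag is resolved in the paper exactly as you anticipate: the test group is drawn as a full group from the model conditioned on $N\geq 2$ (which is what makes it exchangeable with the qualifying calibration groups and why the guarantee is under $P_X^{\geq 2}$), and within-group exchangeability reduces the pair $(1,2)$ to the average over all pairs.
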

\noindent
In particular, if the number of repeated measurements $N$ is independent from $X$ (i.e., $N_k\independent X_k$ for each $k$---for example,
if the number of repeats is simply given by some constant greater than or equal to two),
then $P_X^{\geq 2}$ is simply equal to the original marginal distribution $P_X$, 
and we have established a second-moment coverage guarantee $\EE{\alpha_{\mathcal{D}}(X_{\test})^2}\leq \alpha^2$, 
as in the initial goal~\eqref{eqn:guarantee_stronger}.

\subsection{Remarks}

\subsubsection{A trivial way to bound the second moment}\label{sec:trivial}
Even in a setting where measurements are not repeated,
it is possible to achieve a second-moment bound in a trivial way: since $\alpha_{\mathcal{D}}(X_{\test})\leq 1$ holds always,
it's trivially true that $\EE{\alpha_{\mathcal{D}}(X_{\test})^2}\leq \EE{\alpha_{\mathcal{D}}(X_{\test})}$.
However, this type of bound is not meaningful. For example, suppose we set $\alpha = 0.1$, aiming for 90\% coverage.
If we construct prediction intervals with 90\% marginal coverage, then we achieve $ \EE{\alpha_{\mathcal{D}}(X_{\test})} \leq 0.1$.
If we instead construct prediction intervals with 99\% marginal coverage (e.g., run split conformal, or HCP
if appropriate, with $\alpha = 0.01$ in place of $\alpha = 0.1$),
then we  achieve $ \EE{\alpha_{\mathcal{D}}(X_{\test})} \leq 0.01$ and consequently, the second-moment bound 
$\EE{\alpha_{\mathcal{D}}(X_{\test})^2}\leq 0.1^2$  also holds. 

However, this is not a method we would want to use in practice because it would 
mean that we are substantially inflating the size of our prediction intervals.
Instead, we would like to verify whether our \emph{original} prediction intervals---constructed with 90\% marginal coverage---achieve
the second-moment bound as well, because they exhibit good conditional coverage properties. 
When repeated measurements are available, this is the benefit offered by
 HCP$^2$. As we  see below, in a setting where the prediction intervals
constructed by HCP already show good conditional (as well as marginal) coverage, 
the intervals constructed by HCP$^2$ need not be much wider.
In addition, to further illustrate this point, in Appendix, we  show empirically that HCP$^2$ can improve
substantially over the trivial solution given by running HCP with $\alpha^2$ in place of $\alpha$.

\subsubsection{Beyond the second moment?}
If the groups (or at least, a substantial fraction of groups) have at least two observations, 
we have seen that we can move from marginal coverage to second-moment coverage---that is, move from bounding
$\EE{\alpha_{\mathcal{D}}(X_{\test})}$, to bounding $\EE{\alpha_{\mathcal{D}}(X_{\test})^2}$. 
In principle, it is possible to push this further: if our data distribution includes groups with $\geq \ell$ observations in the group,
it is possible to define a method for $\ell$-th moment coverage, i.e., bounding $\EE{\alpha_{\mathcal{D}}(X_{\test})^\ell}$.
The construction would be analogous to HCP$^2$---instead of considering pairs of scores within each group (i.e., $\min\{s(Z_{k,i}),s(Z_{k,i'})\}$), we would consider
$\ell$-tuples. However, we do not view this extension as practical (unless the number of groups $K$ is massive), because
we would need to compute the threshold for the prediction interval via a $(1-\alpha^\ell)$-quantile; since in practice
we are generally interested in small values of $\alpha$ (e.g., $\alpha = 0.1$), this would likely lead to intervals
that are not stable (and often, not even finite).

\subsection{Simulations}\label{section:simulation}
We next show simulation results to illustrate the performance of HCP$^2$, as compared to HCP,
for the setting of repeated measurements.\footnote{Code to reproduce this simulation is available at \url{https://github.com/rebeccawillett/Distribution-free-inference-with-hierarchical-data}.}

\paragraph{Score functions.} We  consider two different score functions in our simulations.
First, as for our earlier simulation, we will consider the simple residual score
\begin{equation}\label{eqn:score1}
s(x,y) = \big|y - \widehat{\mu}(x)\big|,\end{equation}
where $\widehat\mu(x)$ is an estimate of $\EEst{Y}{X=x}$, fitted on the first portion of the training data (i.e., $k=1,\dots,K_0$).
As described earlier, this score leads to prediction intervals of the form  $\Ch(X_{\test}) = \widehat{\mu}(X_{\test})\pm T$ for some
threshold $T$, and therefore may not be ideally suited for data with nonconstant variance in the noise.
This motivates a variant of the above score,
given by a rescaled residual,
\begin{equation}\label{eqn:score2}
s(x,y) = \frac{\big|y - \widehat{\mu}(x)\big|}{\widehat{\sigma}(x)},\end{equation}
where $\widehat{\sigma}(x)^2$ is an estimate of $\textnormal{Var}(Y|X=x)$, again
fitted on the first portion of the training data (i.e., $k=1,\dots,K_0$).
This score leads to prediction intervals of the form $\Ch(X_{\test}) = \widehat{\mu}(X_{\test})\pm T\cdot\widehat{\sigma}(X_{\test})$,
again for some
threshold $T$ selected using the calibration set.
The rescaled residual score
has the potential to adapt to local variance and therefore, to exhibit better conditional coverage properties empirically (see e.g.,~\citet{lei2018distribution} for more discussions on the use of this nonconformity score).
In this simulation we will use each of the above scores.

\paragraph{Data and methods.}
The data for this simulation is generated as follows:
for each group $k=1,\dots,K$, we set $N_k \equiv 2$ and draw
\begin{align*}
&X_k \iidsim \text{Unif}[0,5],\\
&Y_{k,1},Y_{k,2}\mid X_k \iidsim \mathcal{N}(\mu(X_k), \sigma(X_k)^2),
\end{align*}
where the true conditional mean is given by
\[\mu(x) = 1+x+0.1\cdot x^2 ,\]
 and the true conditional variance is defined in two different ways:
\begin{align*}
&\text{Setting 1 (constant variance)} : \sigma(x) \equiv 2, \\
&\text{Setting 2 (nonconstant variance)} : \sigma(x) = \One{x < 3} + (1+4(x-3)^4)\cdot \One{3 \leq x < 4} + 5\cdot \One{x \geq 4}.
\end{align*}
For both settings, we generate data for $K=1000$ many independent groups, with $N=2$ observations per group.
The test point $X_{\test}$ is then drawn independently from the same marginal distribution, $X_{\test}\sim \text{Unif}[0,5]$. 

The two settings are illustrated in Figure~\ref{fig:scatter}. Setting 1 reflects an easier setting, where the constant variance of the noise means that
the scores~\eqref{eqn:score1} and~\eqref{eqn:score2} each lead to prediction intervals that fit well to the trends in the data as long as $\widehat{\mu}$ estimates the true conditional 
mean $\mu$ fairly accurately. Setting 2, on the other hand, reflects the case where we have different degrees of prediction difficulty at different values of the features $X$
due to highly nonconstant variance in the noise,
and in particular, the simple score~\eqref{eqn:score1} is not well-suited to this setting (since it does not allow for prediction intervals to have widths varying with $X$),
while the rescaled residual score~\eqref{eqn:score2} has the potential to yield a much better fit as long as both $\widehat{\mu}$ and $\widehat{\sigma}$ are accurate estimates.

The training data is split into $K_0=K/2$ many groups for training
and $K_1 = K/2$ many groups for calibration. On the training data, $\widehat{\mu}$ and $\widehat{\sigma}$  are fitted 
via kernel regression with a box kernel $K_h(x) = \frac{1}{2h}\cdot\One{|x| < h}$, with bandwidth $h = 0.5$.

\begin{figure}[ht]
\begin{center}
\includegraphics[width=0.9\textwidth]{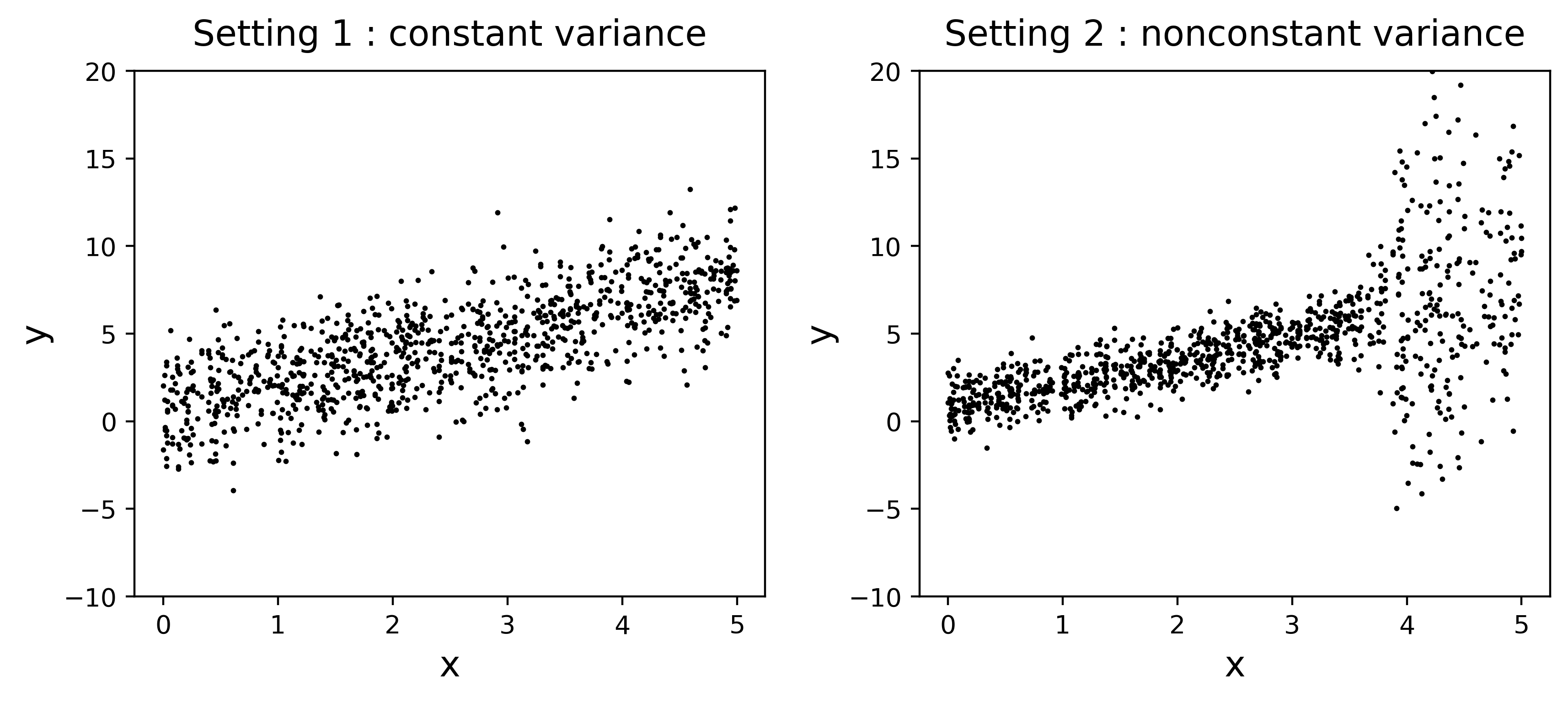}
\caption{Scatter plot of data sets from one trial of Setting 1 (constant variance case) and one trial of Setting 2 (nonconstant variance case).}
\label{fig:scatter}
\end{center}
\end{figure}

\paragraph{Results.}
We run 500 independent trials of the simulation. For each trial, each of the two settings, and each method (HCP and HCP$^2$), 
after generating the training data and the test point $X_{\test}$ we construct a prediction set $\Ch(X_{\test})$,
and then compute the conditional miscoverage rate $\alpha_{\mathcal{D}}(X_{\test})$ (which can be computed exactly,
since the distribution of $Y_{\test}\mid X_{\test}$ is known by construction) and the width of the prediction set $\text{length}(\Ch(X_{\test}))$.

The results for the residual score $s(x,y) = |y - \widehat\mu(x)|$  are shown in Figure~\ref{fig:sim_hist_residual}.
We see that, for Setting 1 (constant variance), where this score is a good match for the shape of the conditional distribution of $Y\mid X$,
both HCP and HCP$^2$ show conditional miscoverage rates $\alpha_{\mathcal{D}}(X_{\test})$ that cluster closely around 
the target level $\alpha =0.2$, and consequently, the prediction interval widths are similar for the two methods. 
For Setting 2 (nonconstant variance), on the other hand, this score is a poor match for the distribution, and while HCP achieves
an average coverage of 80\%, the distribution of $\alpha_{\mathcal{D}}(X_{\test})$ has extremely large variance. 
It is clear that HCP does not satisfy any sort of conditional coverage or second-moment coverage property,
and consequently HCP$^2$ is forced to produce substantially wider prediction intervals, illustrating the tradeoff between
width of the interval and the coverage properties it achieves.

\afterpage{
\begin{figure}[H]
\begin{center}
\includegraphics[width=0.73\textwidth]{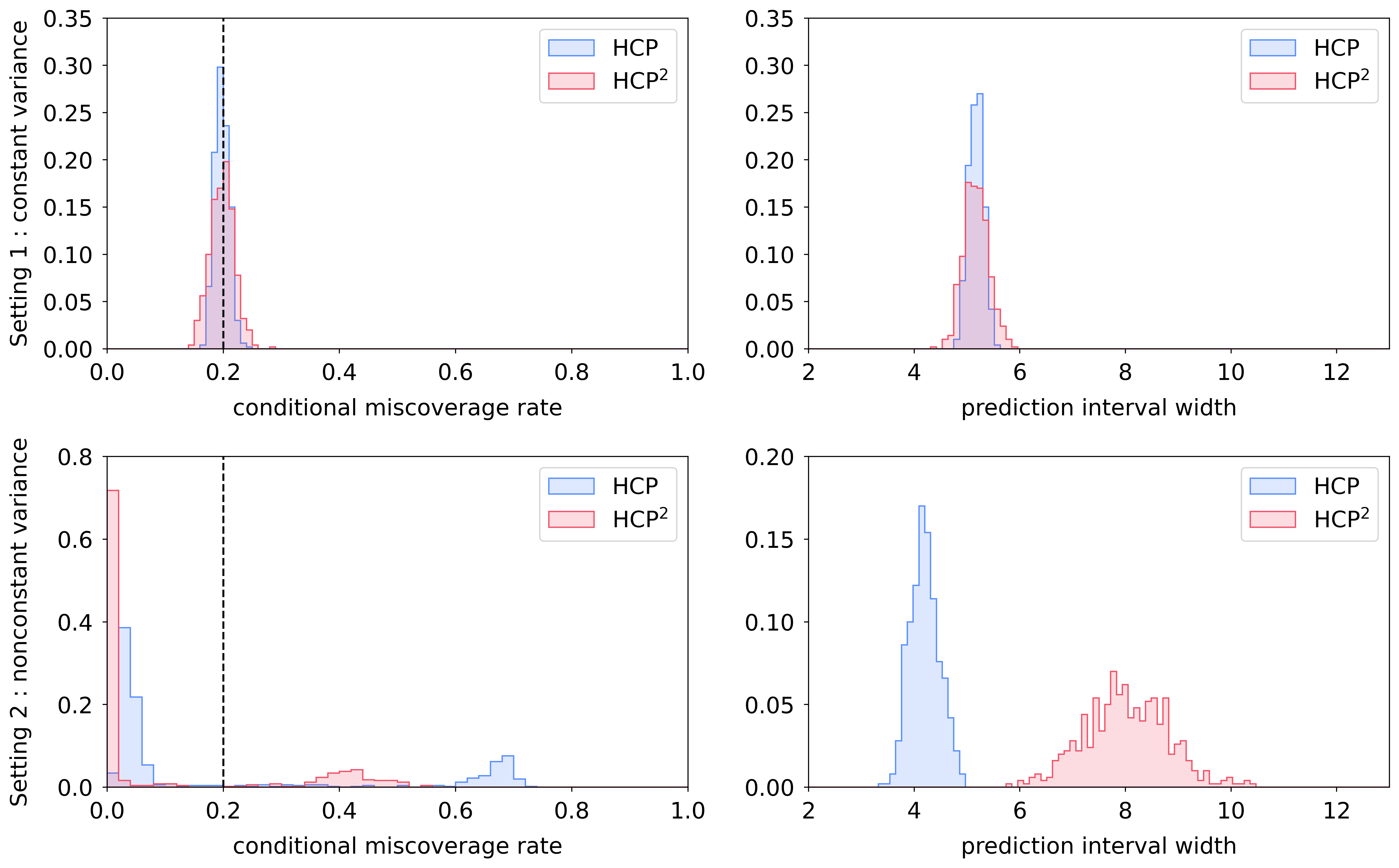}
\caption{Results for HCP and HCP$^2$ over 500 independent trials, with score $s(x,y) = |y - \widehat{\mu}(x)|$.}
\label{fig:sim_hist_residual}
\end{center}
\end{figure}\vspace{-.2in}

\begin{figure}[H]
\begin{center}
\includegraphics[width=0.73\textwidth]{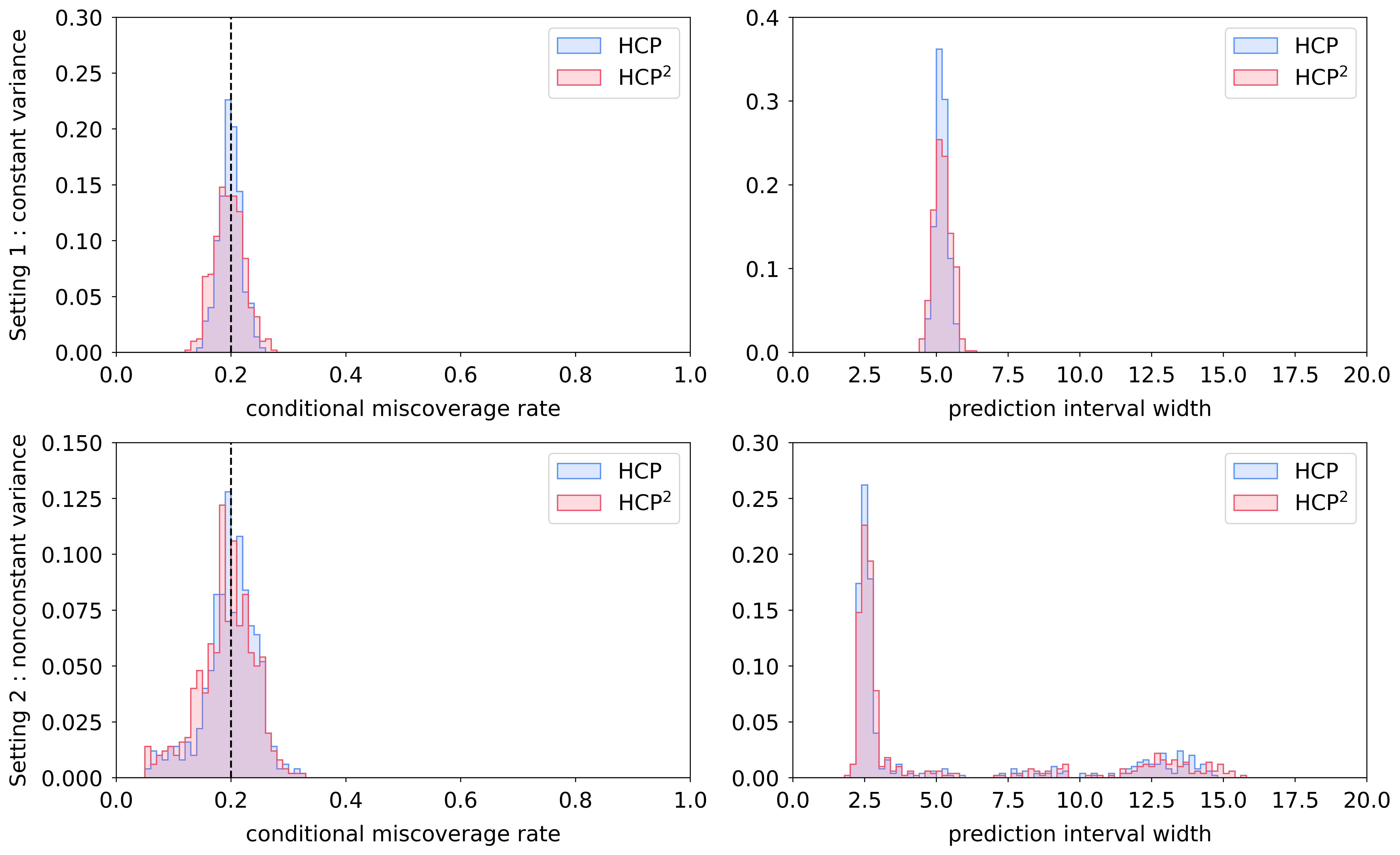}
\caption{Results for HCP and HCP$^2$ over 500 independent trials, with score $s(x,y) = |y - \widehat{\mu}(x)|/\widehat{\sigma}(x)$.}
\label{fig:sim_hist_standardized}
\end{center}
\end{figure}\vspace{-.2in}

\begin{table}[h]
\begin{center} {\small
\begin{tabular}{cc|cc|cc}
\hline
&&\multicolumn{2}{c|}{$s(x,y) = |y - \widehat{\mu}(x)|$}&\multicolumn{2}{c}{$s(x,y) = |y - \widehat{\mu}(x)|/\widehat\sigma(x)$}\\
&&$\EE{\alpha_{\mathcal{D}}(X_{\test})}$ & $\EE{\alpha_{\mathcal{D}}(X_{\test})^2}$ &$\EE{\alpha_{\mathcal{D}}(X_{\test})}$ & $\EE{\alpha_{\mathcal{D}}(X_{\test})^2}$\\
\hline
\multirow{2}{*}{Setting 1}& HCP &  0.1979 (0.0006)  & 0.0393 (0.0002)  &  0.1985 (0.0009)  & 0.0398 (0.0003) \\
& HCP$^2$ & 0.1983 (0.0009)  & 0.0398 (0.0004)  & 0.1965 (0.0012)  & 0.0393 (0.0005) \\
\hline
\multirow{2}{*}{Setting 2} & HCP & 0.2017 (0.0118)  & 0.1108 (0.0084)  & 0.1982 (0.0020)  & 0.0414 (0.0008) \\
& HCP$^2$ & 0.1011 (0.0079)  &  0.0413 (0.0034)  & 0.1917 (0.0022)  & 0.0391 (0.0008) \\
\hline
\end{tabular} }
\end{center}
\caption{Mean miscoverage rate (target: $\alpha = 0.2$) and mean second-moment miscoverage rate (target: $\alpha^2 = 0.04$), for HCP and HCP$^2$, with standard errors in parentheses. Results are averaged over 500 independent trials.}
\label{tab:HCP_HCP2}
\end{table}}

In contrast, the results
for the rescaled residual score $s(x,y) = |y-\widehat\mu(x)|/\widehat\sigma(x)$ are shown in Figure~\ref{fig:sim_hist_standardized}.
This choice of score is a good match for the data distribution in both Setting 1 and Setting 2, since it accommodates nonconstant
variance. Consequently, for both HCP and HCP$^2$, we see that the empirical distribution of $\alpha_{\mathcal{D}}(X_{\test})$ 
clusters around the target level $\alpha = 0.2$, and the two methods produce similar prediction interval widths.
Thus we see that, when the score function is chosen well, the stronger coverage guarantee offered by HCP$^2$ is essentially ``free''
since we can achieve it without substantially increasing the width of the prediction intervals.

\section{Application to forecasting with dynamical system model}

Our study is inspired by the generation of ensembles of climate forecasts created by running a simulator over a collection of perturbed initial conditions. These so-called ``initial condition ensembles'' help decision-makers account for uncertainties in forecasts when developing climate change mitigation and response plans \citep{mankin2020value}. Similar ensembles are used for weather forecasting; for instance, the European Centre for Medium-Range Weather Forecasts (ECMWF) notes:
\begin{quote}
	An ensemble weather forecast is a set of forecasts that present the range of future weather possibilities. Multiple simulations are run, each with a slight variation of its initial conditions and with slightly perturbed weather models. These variations represent the inevitable uncertainty in the initial conditions and approximations in the models. \citep{ecmwf}
\end{quote}

\paragraph{Overview of experiment.}
In the context of this paper, we seek to take an initial condition and generate a prediction interval that contains all ensemble forecasts with high probability. 
Specifically, we apply HCP and HCP$^2$ to a dynamical system dataset generated from the Lorenz 96 (L96) model. For this experiment,\footnote{Code to reproduce this experiment is available at \url{https://github.com/rebeccawillett/Distribution-free-inference-with-hierarchical-data}.}  our task will be to \emph{predict the future state of the system} looking ahead from current time $t=T_0$ to future time $t=T$.
We will consider two different choices of $T$ for defining both an easier task (near-future prediction with a low value of $T$) and a harder task (prediction farther into the future, with a higher value of $T$).
We will see that, while HCP provides marginal coverage (as guaranteed by the theory) for both tasks, it empirically achieves the second-moment
coverage guarantee for only the easier task; consequently, HCP$^2$ is able to return intervals
that are only barely wider for the easier near-future prediction task, but for the harder task must return substantially wider intervals
in order to achieve a second-moment coverage guarantee.

\paragraph{A description of the L96 system}

Developed by \citet{lorenz1996predictability}, the L96 system is a prototype model for studying prediction and filtering of dynamical systems in climate science. 
The model is a system of coupled ordinary equations and defined as follows:

\begin{equation}\label{eqn:L96_model}
    \frac{\mathsf{d}u_{m}(t)}{\mathsf{d}t} = -u_{m-1}(t)\big(u_{m-2}(t) - u_{m+1}(t)\big) - u_{m}(t) + F,
\end{equation}
where $u_{m}(t)$ denotes the state of the system at $M$ different locations $m = 1, \dots ,M$.
In order for the system to be well-defined at the boundaries, we define $u_{M+1}(t)=u_{1}(t)$, $u_0(t) = u_M(t)$, $u_{-1}(t) = u_{M-1}(t)$; 
this ensures that the system is cyclic over the $M$ locations. 
The value $F \in \mathbb{R}$ denotes the external forcing, i.e., an effect of external factors; we use $F=10$, which corresponds to strong chaotic turbulence \citep{majda2005information}. A representative set of samples at discrete time points for the L96 system is shown in \cref{fig:L96illustration} for $M = 40$ locations from time $t=0.05$ to $t=5.00$. The figure illustrates correlations between neighboring channels at each time, reflecting how in temporal change in channel $m$'s amplitude depends on the amplitudes in channels $m$, $m-1$, $m-2$, and $m+1$ in \cref{eqn:L96_model}. This figure is for illstrative purposes only, since we use different values of $M$ and $T$ in our experiments described below.

\begin{figure}[h]
	\begin{center}
		\includegraphics[width=0.73\textwidth]{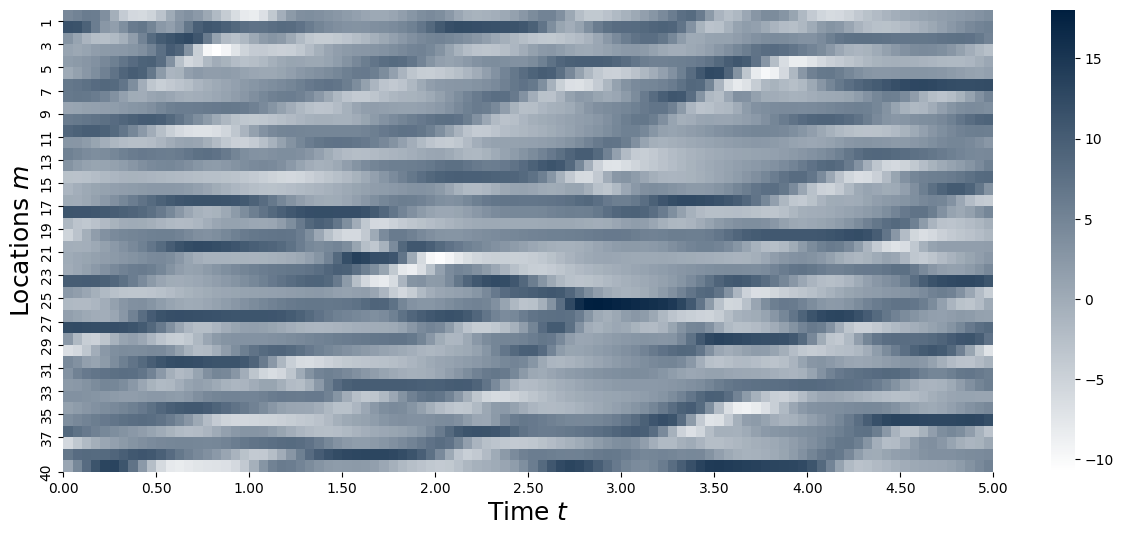}
		\caption{Illustration of the L96 dynamical system model: heatmap showing the states at $M=40$ locations across 80 time points between time 0 and 5.}
		\label{fig:L96illustration}
	\end{center}
\end{figure}

To specify our workflow, we  define a function $\texttt{L96}(\cdot;T)$ that runs the L96 system as defined in~\eqref{eqn:L96_model} for a time duration $T$ (more precisely, this function implements the Runge--Kutta approximation with time step $\mathsf{d}t = 0.05$, \citep{bank1987springer,solve_ivp}).
Thus, given starting conditions $u(0) = (u_1(0),\dots,u_M(0))$, the value of the system at time $T$, $u(T) = (u_1(T),\dots,u_M(T))$, is (approximately)
given by
\[u(T) = \texttt{L96}(u(0);T).\]

\paragraph{Generating the data.}

To generate data, we do the following: We let $K=800$ correspond to the number of independent groups of observations; in this setting, each group corresponds to a different initial states of the L96 system. 
Independently for each $k=1,\dots,K$,
we form feature vectors $X_k$ using the L96 system in order to provide more meaningful or realistic starting conditions:
\[X_k = \texttt{L96}\big(u; T_0\big) \textnormal{ where } u\sim \mathcal{N}(0,\mathbf{I}_m),\]
i.e., $X_k$ is the output of the L96 system run for  time duration $T_0$ (where we take $T_0=1$) when initialized with Gaussian noise at $t=0$.
This feature vector is $X_k = (X_{k1},\dots,X_{kM})\in\R^M$, with dimension $M=10$, which defines the initial conditions of the system.
To mimic an initial condition ensemble, we generate data as follows: For each initial condition $k$, we create an ensemble of $N_k=50$ perturbed initial conditions. For each $i=1,\dots,N_k$, start the L96 system initialized at a perturbation of $X_k$,
\[ (X_{k1} + r \eta^{k,i}_1,\dots,X_{kM} + r \eta^{k,i}_M)\textnormal{ where }\eta^{k,i}_m\iidsim \mathcal{N}(0,1),\]
with the scale of the noise set as $r = 0.01$.
We then run the L96 system~\eqref{eqn:L96_model} up until time $t=T = T_0+\tau$ to define the response:
\[Y_{ki} = \left[\texttt{L96}\Big( (X_{k1} + r \eta^{k,i}_1,\dots,X_{kM} + r \eta^{k,i}_M); T\Big) \right]_1,\]
i.e., 
the value of the system at the final time $T$, at its first location, $m=1$. Here, the term $\tau = T - T_0$ represents the forecast horizon, which reflects the difficulty of prediction. In the context of the weather and climate forecasting motivating application, our setting
is analogous to having imperfect knowledge of the current state of various weather variables ($X_k$) and observing an ensemble of predictions of precipitation at a
single spatial location in the future ($\{Y_{ki}\}_i$).

\paragraph{Implementation of the methods}

We consider two different choices of $\tau$, to define two different prediction tasks: a near-future prediction task, with $\tau=0.05$, and a 
more distant-future prediction task, with $\tau=0.5$.
For each of the two tasks, we generate training, calibration, and test sets as described above, where each data set has $K=800$ and $N_k\equiv 50$ (This can be interpreted as running a climate simulation for $N = 50$ different small perturbations of the true initial climate
state for each of $K = 800$ different true initial climate states). We run a simple linear neural network model on the training data set to produce estimators $\hat{\mu}(\cdot)$ and $\hat{\sigma}(\cdot)$ of the conditional mean and conditional standard deviation of $Y$ given $X$. We use the stochastic gradient descent optimizer with a learning rate of $0.0009$ and the mean squared error loss. Using the calibration data, we then implement the HCP and HCP$^2$ methods, with score function $s(x,y) = |y - \hat{\mu}(x)| / \hat{\sigma}(x)$.

\paragraph{Results.} 
We evaluate the performance of the two procedures with the test data. The results are summarized in Table~\ref{table:L96} and Figure~\ref{fig:L96}.

\begin{table}[h]
	\begin{center} {\small
			\begin{tabular}{lc|ccc}
				\hline
				&&$\EE{\alpha_{\mathcal{D}}(X_{\test})}$ & $\EE{\alpha_{\mathcal{D}}(X_{\test})^2}$ & Mean width\\
				\hline
				\multirow{2}{*}{$\tau = 0.05$}& HCP &  0.2006 (0.0041)  & 0.0546 (0.0029)  &  1.4875 (0.0044) \\
				& HCP$^2$ & 0.1748 (0.0039)  & 0.0437 (0.0026)  & 1.5753 (0.0047)  \\
				\hline
				\multirow{2}{*}{$\tau = 0.5$} & HCP & 0.2313 (0.0119)  & 0.1701 (0.0110)  & 10.4259 (0.1957) \\
				& HCP$^2$ & 0.0628 (0.0073)  &  0.0478 (0.0066)  & 22.0395 (0.4138)  \\
				\hline
		\end{tabular} }
	\end{center}
	\caption{Results for L96 data, for the HCP and HCP$^2$ methods. The table displays the estimated mean miscoverage rate (target: $\alpha = 0.2$), mean second-moment miscoverage rate (target: $\alpha^2 = 0.04$), and mean width of prediction intervals. These results are computed for a single trial of the experiment;  the mean miscoverage rate and mean prediction interval width are estimated using a test set (with standard errors shown in parentheses).}
	\label{table:L96}
\end{table}

\begin{figure}[h]
	\begin{center}
		\includegraphics[width=0.73\textwidth]{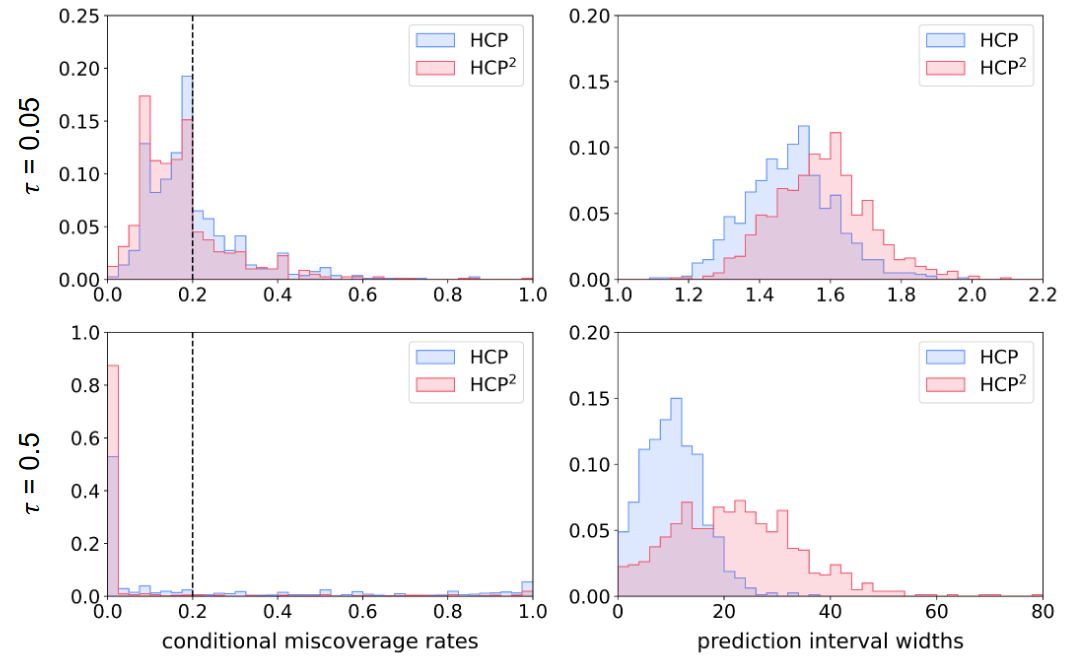}
		\caption{Results for L96 data: conditional miscoverage rates and the widths of the prediction sets from HCP and HCP$^2$.}
		\label{fig:L96}
	\end{center}
\end{figure}

The results show that HCP and HCP$^2$ provide prediction sets that satisfy their respective target guarantees. Since the dynamical system is chaotic, making accurate predictions becomes harder for larger $\tau$. For the task of predicting the relatively near future ($\tau=0.05$), 
the two procedures provide similar prediction sets. In particular, we see that HCP already achieves second-moment miscoverage near the target level $\alpha^2 =0.04$; consequently, HCP$^2$ is able to return a similar prediction interval as HCP, without being substantially more conservative. In contrast, for the task of predicting the far future ($\tau=0.5$), 
 while HCP successfully bounds the marginal miscoverage rate with relatively narrow prediction intervals, 
 it fails to achieve good conditional miscoverage control (indeed,
 HCP shows a high second-moment miscoverage rate---nearly as large as $\alpha = 0.2$, i.e., the worst-case scenario as described in Section~\ref{sec:trivial}). 
 As a result the HCP$^2$ method
needs to be much more conservative (wider prediction sets than HCP) in order to reduce second-moment miscoverage to the allowed level $\alpha^2 =0.04$. 

\section{Discussion}\label{sec:discussion}

In this work, we examine the problem of distribution-free inference for data sampled under a hierarchical structure, and 
propose hierarchical conformal prediction (HCP) to extend the conformal prediction framework to this setting.
We also examine the special case of repeated measurements (multiple draws of the response $Y$ for each sampled feature vector $X$),
and develop the HCP$^2$ method to achieve a stronger second-moment coverage guarantee, moving towards conditional coverage in this setting.

These results raise a number of open questions. In many statistical applications, the hierarchical structure of the sampling 
scheme is (at least partially) controlled by the analyst designing the study. In particular, this means that the analyst
can choose between, say, a large number of independent groups $K$ with a small number of measurements $N_k$ within each group,
or conversely a small number of groups with large numbers of repeats. 
Characterizing the pros and cons of this tradeoff is an important question to determine how study
design affects inference in this distribution-free setting.

Our results show that having even a small amount of repeated measurements can significantly expand the realm of what distribution-free inference can do (relatedly, recent work by \citet{lee2021distribution} examines a similar phenomenon when the distribution of $X$ is discrete). This raises a more general question: what reasonable additions can we make to the setting so that a more useful inference is possible in a distribution-free manner? We aim to explore this type of problem in our future work.

\subsection*{Acknowledgements}
Y.L., R.F.B., and R.W. were partially  supported by the National Science Foundation via grant DMS-2023109. 
Y.L. was additionally supported by National Institutes of Health via grant R01AG065276-01.
R.F.B. was additionally supported by the Office of Naval Research via grant N00014-20-1-2337.
R.W. was additionally supported the National Science Foundation via grant DMS-1930049 and by the Department of Energy via grant DE-AC02-06CH113575.

\bibliographystyle{plainnat}
\bibliography{bib}

\appendix

\section{Additional results for Repeated Subsampling}

\begin{proposition}\label{prop:repeated_subsampling}
Consider a bootstrapped calibration set, where for the $k$th
group, the data points within that group are given by
\begin{equation}\label{eqn:bootstrap_data set}\tilde{Z}^{\textnormal{boot}}_k = (Z_{k,i_k^{(1)}},\dots,Z_{k,i_k^{(B)}}),\end{equation}
where for each $k$, the indices $i_k^{(1)},\dots,i_k^{(B)}$ are sampled uniformly with replacement from $\{1,\dots,N_k\}$.
The Repeated Subsampling method of \citet{dunn2022distribution}  is equivalent to running HCP 
using the bootstrapped calibration set~\eqref{eqn:bootstrap_data set} (for $k=K_0+1,\dots,K$) in place of the original data set.
Moreover, if the original data set
$(\tilde{Z}_1,\dots,\tilde{Z}_{K+1})$
satisfies hierarchical exchangeability (Definition~\ref{def:hier_exch}), then the bootstrapped calibration and test data
\[(\tilde{Z}^{\textnormal{boot}}_{K_0+1},\dots,\tilde{Z}^{\textnormal{boot}}_K,\tilde{Z}^{\textnormal{boot}}_{K+1})\]
satisfies hierarchical exchangeability (Definition~\ref{def:hier_exch}) conditional on the training data $\tilde{Z}_{[K_0]}$.
Consequently, the Repeated Subsampling method satisfies
\[\PP{Y_{\test}\in\Ch(X_{\test})}\geq 1-\alpha.\]
and moreover, if the scores $s(Z_{k,i})$ are distinct almost surely,
it additionally holds that
\[\PP{Y_{\test}\in\Ch(X_{\test})} \leq 1-\alpha + \frac{2}{K_1 + 1}.\]
\end{proposition}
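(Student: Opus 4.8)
The plan is to prove the three assertions of the proposition in sequence: (i) the equivalence of Repeated Subsampling to HCP run on the bootstrapped calibration set; (ii) the conditional hierarchical exchangeability of the bootstrapped groups; and (iii) the coverage bounds, which follow from Theorem~\ref{thm:HCP}. For (i), the only observation needed is that the bootstrapped group $\tilde{Z}^{\textnormal{boot}}_k$ from~\eqref{eqn:bootstrap_data set} has deterministic length $B$, so that substituting $N_k=B$ for each calibration group $k=K_0+1,\dots,K$ into the HCP threshold~\eqref{eqn:define_HCP} reproduces verbatim the threshold $T$ derived above for Repeated Subsampling; hence the two prediction sets are identical, and from now on Repeated Subsampling is viewed as HCP run (with the already-fitted score $s$, trained on $\tilde{Z}_{[K_0]}$) on the calibration groups $\tilde{Z}^{\textnormal{boot}}_{K_0+1},\dots,\tilde{Z}^{\textnormal{boot}}_K$.

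For (ii) I would work conditionally on $\tilde{Z}_{[K_0]}$ and verify the two clauses of Definition~\ref{def:hier_exch}. First, since $(\tilde{Z}_1,\dots,\tilde{Z}_{K+1})$ is exchangeable, restricting to permutations fixing $\{1,\dots,K_0\}$ shows that the joint law of $(\tilde{Z}_{[K_0]},\tilde{Z}_{K_0+1},\dots,\tilde{Z}_{K+1})$ is invariant under permutations of its last $K_1+1$ blocks, so the conditional law of $(\tilde{Z}_{K_0+1},\dots,\tilde{Z}_{K+1})$ given $\tilde{Z}_{[K_0]}$ is exchangeable a.s. Next write $\tilde{Z}^{\textnormal{boot}}_k=g(\tilde{Z}_k;U_k)$, where $g$ is the deterministic ``select the coordinates at the positions listed in $U_k$'' map and $U_k=(i_k^{(1)},\dots,i_k^{(B)})$, with the $U_k$ mutually independent and independent of all the $\tilde{Z}$'s and $U_k$ consisting of $B$ i.i.d.\ uniform draws on $\{1,\dots,\textnormal{length}(\tilde{Z}_k)\}$. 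Thus $(\tilde{Z}^{\textnormal{boot}}_{K_0+1},\dots,\tilde{Z}^{\textnormal{boot}}_{K+1})$ is obtained by applying one fixed Markov kernel independently to each block of the conditionally exchangeable sequence $(\tilde{Z}_{K_0+1},\dots,\tilde{Z}_{K+1})$, and applying a common kernel blockwise preserves exchangeability, which gives the first clause of Definition~\ref{def:hier_exch}. For the within-group clause, I would condition further on $(\tilde{Z}_{K_0+1},\dots,\tilde{Z}_{K+1})$: then $\tilde{Z}^{\textnormal{boot}}_k=(Z_{k,i_k^{(1)}},\dots,Z_{k,i_k^{(B)}})$ with $(i_k^{(1)},\dots,i_k^{(B)})$ exchangeable and affecting only group $k$, so permuting the $B$ entries of $\tilde{Z}^{\textnormal{boot}}_k$ leaves the conditional law unchanged; since $\textnormal{length}(\tilde{Z}^{\textnormal{boot}}_k)\equiv B$, this is exactly the second clause. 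Averaging back over $(\tilde{Z}_{K_0+1},\dots,\tilde{Z}_{K+1})$ given $\tilde{Z}_{[K_0]}$ yields (ii).

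For (iii), conditionally on $\tilde{Z}_{[K_0]}$ the score $s$ is a fixed function, so Theorem~\ref{thm:HCP} (applied with a pre-specified score function and the $K_1$ bootstrapped calibration groups, with $\tilde{Z}^{\textnormal{boot}}_{K+1}$ as the test group) gives $\PPst{Y^{\textnormal{boot}}_{K+1,1}\in\Ch(X^{\textnormal{boot}}_{K+1,1})}{\tilde{Z}_{[K_0]}}\geq 1-\alpha$, together with the upper bound $1-\alpha+\frac{2}{K_1+1}$ when the scores are a.s.\ distinct; here I would invoke the footnote to Theorem~\ref{thm:HCP}, since for $k\neq k'$ the bootstrapped points $Z^{\textnormal{boot}}_{k,i}$ and $Z^{\textnormal{boot}}_{k',i'}$ are original observations drawn from different groups and so have a.s.\ distinct scores --- which is all the upper bound needs. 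Integrating over $\tilde{Z}_{[K_0]}$ removes the conditioning. The last step is to replace $\tilde{Z}^{\textnormal{boot}}_{K+1,1}$ by $Z_{\textnormal{test}}=\tilde{Z}_{K+1,1}$: the set $\Ch$ is a function of $(\tilde{Z}_1,\dots,\tilde{Z}_K)$ and the resampling indices $\{i_k^{(b)}:k\leq K\}$, none of which depend on the internal ordering of group $K+1$, while $\tilde{Z}^{\textnormal{boot}}_{K+1,1}=Z_{K+1,i^{(1)}_{K+1}}$ with $i^{(1)}_{K+1}$ uniform on $\{1,\dots,N_{K+1}\}$ and independent of $\tilde{Z}_{K+1}$; conditioning on everything but $i^{(1)}_{K+1}$ and the within-group order of $\tilde{Z}_{K+1}$, the within-group exchangeability of the original data makes the conditional coverage probability for $Z_{K+1,i^{(1)}_{K+1}}$ equal to the average over $j$ of the conditional coverage probabilities for $Z_{K+1,j}$, all of which coincide with that for $Z_{\textnormal{test}}=Z_{K+1,1}$. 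Taking expectations transfers both bounds to $\PP{Y_{\textnormal{test}}\in\Ch(X_{\textnormal{test}})}$.

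The main obstacle is the bookkeeping in (ii): one has to set up the conditioning so the bootstrap is a single fixed Markov kernel applied blockwise to a conditionally exchangeable sequence (for the across-group permutations) and then, separately, exploit exchangeability of the resampling indices within each block (for the within-group permutations); the underlying ``exchangeability is preserved under blockwise randomized maps'' fact is elementary but must be stated with the independence of the $U_k$ from $\tilde{Z}_{[K_0]}$ made explicit. The only other point needing care is the final transfer from $\tilde{Z}^{\textnormal{boot}}_{K+1,1}$ to $Z_{\textnormal{test}}$, which uses both that $\Ch$ does not depend on group $K+1$ and that the original data is within-group exchangeable; neither step requires any substantial computation.
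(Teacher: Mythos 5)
Your proposal is correct and takes essentially the same route as the paper: the equivalence is read off from the definitions, hierarchical exchangeability of the bootstrapped groups is verified via exactly the paper's two observations (bootstrapping commutes in distribution with permuting the groups, and the resampling indices within a group are exchangeable), and the coverage bounds then follow from Theorem~\ref{thm:HCP} with the footnote handling repeated within-group scores. The only difference is that you make explicit the final transfer from coverage of $\tilde{Z}^{\textnormal{boot}}_{K+1,1}=Z_{K+1,i^{(1)}_{K+1}}$ to coverage of $Z_{\test}=\tilde{Z}_{K+1,1}$ (using that $\Ch$ does not depend on group $K+1$ together with within-group exchangeability), a step the paper leaves implicit.
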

\begin{proof}
The first claim---i.e., that Repeated Subsampling is equivalent to running HCP with the bootstrapped
calibration set---follows directly from the definition of the methods. Next we check hierarchical exchangeability. 
First, for a permutation $\sigma$ of the groups $K_0+1,\dots,K+1$, we observe that the permuted
bootstrapped data set
\[(\tilde{Z}^{\textnormal{boot}}_{\sigma(K_0+1)},\dots,\tilde{Z}^{\textnormal{boot}}_{\sigma(K+1)})\]
is equal, in distribution, to first permuting the original calibration and test data (i.e., replacing 
$(\tilde{Z}_{K_0+1},\dots,\tilde{Z}_{K+1})$ with $(\tilde{Z}_{\sigma(K_0+1)},\dots,\tilde{Z}_{\sigma(K+1)})$,
which is equal in distribution conditional on $\tilde{Z}_{[K_0]}$),
and then sampling data points at random for each group; therefore, it is equal in distribution to the bootstrapped
data $(\tilde{Z}^{\textnormal{boot}}_{K_0+1},\dots,\tilde{Z}^{\textnormal{boot}}_{K+1})$. The second exchangeability property (i.e., equality in distribution after
permuting within a group) follows from the fact that the randomly sampled indices satisfy
$(i_k^{(1)},\dots,i_k^{(B)})\eqd (i_k^{(\sigma(1))},\dots,i_k^{(\sigma(B))})$ for any $\sigma\in\mathcal{S}_B$.
We can now apply Theorem~\ref{thm:HCP} to prove the coverage results.\footnote{For the upper bound on coverage,
note that the footnote in Theorem~\ref{thm:HCP} states that it is sufficient for scores to be distinct across groups; in particular,
the bootstrapped data set may have $i_k^{(b)} = i_k^{(b')}$ leading to repeated scores $s(Z_{k,i_k^{(b)}}) = s(Z_{k,i_k^{(b')}})$
within the $k$th group, but this does not contradict the result since we only need to ensure $s(Z_{k,i_k^{(b)}})\neq s(Z_{k',i_{k'}^{(b')}})$
for distinct groups $k\neq k'$.}
\end{proof}

\section{Additional experimental results: HCP vs conformal prediction}

Here, we provide simulation results comparing the proposed HCP method and standard conformal prediction, to illustrate that conformal prediction fails to achieve valid and tight coverage in the hierarchical data setting, while HCP successfully addresses this issue.

We generate data according to Setting 2 in Section~\ref{section:simulation}, where the conditional variance of $Y$ given $X$ varies across the feature space. We run experiments under three different group size settings:
\begin{enumerate}
    \item $N \equiv 5$,
    \item $N \mid X \sim \textnormal{Poisson}(2X)$,
    \item $N \mid X \sim \textnormal{Poisson}(10-2X)$.
\end{enumerate}
The first setting represents the case where the group sizes are homogeneous, and thus standard conformal prediction is also expected to perform well in the hierarchical data setting. The second setting corresponds to the case where more samples are likely to come from the `difficult region'; unless the heterogeneity in group sizes is accounted for, the resulting prediction set is expected to be conservative. The third setting represents the opposite case. The results shown in Figure~\ref{fig:HCP_CP} confirm that the two procedures behave as expected: In all three settings, HCP successfully addresses the effect of group sizes and attains the target coverage rate accurately, while standard conformal prediction does not.

\begin{figure}[h]
	\begin{center}
		\includegraphics[width=0.85\textwidth]{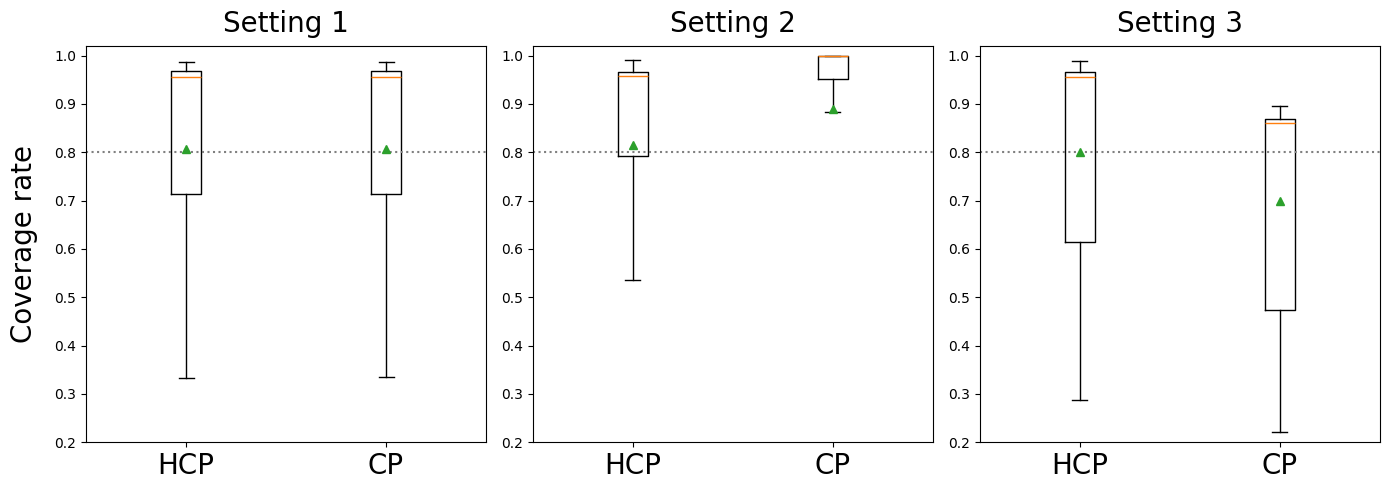}
		\caption{Coverage rates of HCP and split conformal prediction (denoted as CP) under three different group size settings.}
		\label{fig:HCP_CP}
	\end{center}
\end{figure}

\section{Extending HCP to full conformal and jackknife+}\label{Appendix:A}
In Sections~\ref{sec:HCP} and~\ref{sec:HCP2}, we introduced methods that extend split conformal prediction to the setting of hierarchical exchangeability, constructing prediction sets with the marginal coverage guarantee (via the HCP method) and the stronger second-moment coverage guarantee (via the HCP$^2$ method). Similar ideas can be applied to derive extensions of other methods---specifically, full conformal prediction \citep{vovk2005algorithmic} and jackknife+ \citep{barber2021predictive}---to the hierarchical data setting, which allows us to avoid
the statistical cost of data splitting.

\subsection{Extending full conformal prediction}

\subsubsection{Background: full conformal prediction}
In the setting of exchangeable data (without a hierarchical structure), the \emph{full conformal prediction} method \citep{vovk2005algorithmic}
allows us to use the entire training data set for constructing the score function, rather than splitting the data as in split conformal.
Specifically, the method begins with an algorithm $\mathcal{A}: \mathcal{Z}^{n+1}\rightarrow\{\textnormal{functions $\mathcal{Z}\rightarrow \R$}\}$, which
inputs a data set of size $n+1$ and returns a fitted score function $s$; $\mathcal{A}$ is assumed to be symmetric in its input arguments, i.e., permuting
the $n+1$ input data points does not affect $s$.\footnote{The framework also allows for a randomized algorithm $\mathcal{A}$, in which
case the symmetry condition is required to hold in a distributional sense.} Define 
\[s^y = \mathcal{A}\left(Z_1,\dots,Z_n,(X_{\test},y)\right),\]
the fitted score function obtained if $(X_{\test},y)$ were the test point. 
The prediction set is then given by
\[\Ch(X_{\test}) = \left\{y \in \mathcal{Y} : s^y((X_{\test},y)) \leq Q_{1-\alpha}\left(\sum_{i=1}^n \frac{1}{n}\delta_{s^y(Z_i)} + \frac{1}{n}\delta_{+\infty}\right)\right\}.\] 
For exchangeable training and test data, this method offers marginal coverage at level $1-\alpha$ \citep{vovk2005algorithmic}.

\subsubsection{Hierarchical full conformal prediction}
We now define a version of HCP that extends the full conformal method to the hierarchical framework.
 We now begin with an algorithm $\mathcal{A} : \tilde{\mathcal{Z}}^{K+1}\rightarrow \{\textnormal{functions $\mathcal{Z}\rightarrow \R$}\}$,
 again assumed to be symmetric---but now the symmetry follows a hierarchical structure: we assume that,
 for any $\tilde{z}_1,\dots,\tilde{z}_{K+1}$, 
first, it holds that
 \[\mathcal{A}(\tilde{z}_1,\dots,\tilde{z}_{K+1}) =\mathcal{A} (\tilde{z}_{\sigma(1)},\dots,\tilde{z}_{\sigma(K+1)})\]
 for all $\sigma\in\mathcal{S}_{K+1}$,
and, second, 
\[\mathcal{A}(\tilde{z}_1,\dots,\tilde{z}_k,\dots,\tilde{z}_{K+1}) =\mathcal{A}(\tilde{z}_1,\dots,(\tilde{z}_{k,\sigma(1)},\dots,\tilde{z}_{k,\sigma(m)}),\dots,\tilde{z}_{K+1})\]
 for all $k=1,\dots,K+1$ and
all $\sigma\in\mathcal{S}_{\textnormal{length}(\tilde{z}_k)}$.
We  refer to any such algorithm $\mathcal{A}$ as \emph{hierarchically symmetric}.
(For example, if $\mathcal{A}$ simply discards the group information and runs a symmetric procedure
on the data points $(z_{1,1},\dots,z_{1,N_1},\dots,z_{K+1,1},\dots,z_{K+1,N_{K+1}})$, then
the hierarchical symmetry property follows immediately---in this sense, then, requiring hierarchical symmetry
is strictly weaker than the usual symmetry condition.)

Next, define 
\[s^{\tilde{z}} = \mathcal{A}\left(\tilde{Z}_1,\dots,\tilde{Z}_K,\tilde{z}\right)\]
to be the fitted score function obtained if we were to observe an entire \emph{group} of test observations $\tilde{z}$, and let
\[\tilde{C} = \left\{\tilde{z} \in\tilde{\mathcal{Z}} : s^{\tilde{z}}(P_1\tilde{z}) \leq Q_{1-\alpha}\left(\sum_{k=1}^{K} \sum_{i=1}^{N_k} \frac{1}{(K+1)N_k}\cdot \delta_{s^{\tilde{z}}(Z_{k,i})} +\frac{1}{K+1} \cdot \delta_{+\infty} \right)\right\},\]
where $P_1$ denotes the projection to the first component (i.e., if $\tilde{z} = (z_1,\dots,z_m)$, then $P_1(\tilde{z}) = z_1$).
Let \begin{equation}\label{eqn:HCP_fullCP}\Ch(X_{\test}) = \{y \in \R : (X_{\test},y) \in P_1 \tilde{C}\},\end{equation} where $P_1\tilde{C} = \{P_1(\tilde{z}) : \tilde{z}\in\tilde{C}\}$.

Since $\ \mathcal{Z} \cup \mathcal{Z}^2 \cup \dots$ is a space of vectors of arbitrary lengths, this prediction set has heavier computational cost than the full conformal method in standard settings, which already is known to have a high computational cost. It is therefore unlikely to be useful in practical settings; this extension is therefore primarily of theoretical interest, and is intended only to demonstrate that the hierarchical exchangeability framework can be combined with full conformal as well as with split conformal.

Our first theoretical result shows that this method
 provides a marginal coverage guarantee in the hierarchical data setting.
\begin{theorem}\label{thm:HCP_full_conformal}
Suppose that $\tilde{Z}_1,\dots,\tilde{Z}_{K+1}$ satisfies the hierarchical exchangeability property (Definition~\ref{def:hier_exch}), and let $Z_{\test}=(X_{\test},Y_{\test})=\tilde{Z}_{K+1,1}$. Given a hierarchically symmetric algorithm $\mathcal{A}$, the prediction set $\Ch(X_{\textnormal{test}})$ defined above satisfies
\[\PP{Y_{\test} \in \Ch(X_{\test})} \geq 1-\alpha.\]
\end{theorem}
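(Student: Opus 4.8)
The plan is to follow the template of the standard full conformal coverage proof, but carried out with respect to the hierarchical permutation structure rather than ordinary exchangeability. The key structural point is that, since $\mathcal{A}$ is hierarchically symmetric, the fitted score $s := s^{\tilde{Z}_{K+1}} = \mathcal{A}(\tilde{Z}_1,\dots,\tilde{Z}_K,\tilde{Z}_{K+1})$ depends symmetrically on \emph{all} $K+1$ groups (and on all points within each group); so, unlike the split method, there is no training/calibration split to condition on, and the relevant symmetry is the full hierarchical permutation group on the $K+1$ groups. The first step is a routine reduction: it suffices to prove $\PP{\tilde{Z}_{K+1}\in\tilde{C}}\geq 1-\alpha$, because $\tilde{Z}_{K+1}\in\tilde{C}$ forces $P_1\tilde{Z}_{K+1}=Z_{K+1,1}=(X_{\test},Y_{\test})\in P_1\tilde{C}$, hence $Y_{\test}\in\Ch(X_{\test})$. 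Unwinding the definition of $\tilde{C}$, the event $\tilde{Z}_{K+1}\in\tilde{C}$ is exactly $s(Z_{K+1,1})\leq T$, where $T=Q_{1-\alpha}\big(\sum_{k=1}^{K}\sum_{i=1}^{N_k}\frac{1}{(K+1)N_k}\delta_{s(Z_{k,i})}+\frac{1}{K+1}\delta_{+\infty}\big)$.

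Next I would replace the point mass at $+\infty$ by a genuine empirical distribution. Since $+\infty$ dominates every finite score, moving the mass $\frac{1}{K+1}$ off of $+\infty$ and onto the within-group-$(K{+}1)$ empirical distribution $\sum_{i=1}^{N_{K+1}}\frac{1}{(K+1)N_{K+1}}\delta_{s(Z_{K+1,i})}$ can only decrease the $(1-\alpha)$-quantile. Hence $T\geq Q_{1-\alpha}(\widehat{F})$ with $\widehat{F}:=\sum_{k=1}^{K+1}\sum_{i=1}^{N_k}\frac{1}{(K+1)N_k}\delta_{s(Z_{k,i})}$, and it is enough to show $\PP{s(Z_{K+1,1})\leq Q_{1-\alpha}(\widehat{F})}\geq 1-\alpha$. (This quantile-monotonicity device is exactly the one behind Theorem~\ref{thm:HCP}.)

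The heart of the argument is a symmetrization/conditioning step. Because $\mathcal{A}$ is hierarchically symmetric, $s$, and therefore $\widehat{F}$, is a function of the \emph{unordered data}, i.e.\ of the multiset of groups with each group viewed as an unordered multiset of its points. Conditioning on this unordered data, hierarchical exchangeability (Definition~\ref{def:hier_exch}) forces the labelling to be uniform: $\tilde{Z}_{K+1}$ is then a uniformly random group among the $K+1$, and given it, $Z_{K+1,1}$ is a uniformly random point within that group. But $\widehat{F}$ is precisely the law of ``draw a group uniformly among the $K+1$, then draw a point uniformly within it'', so conditional on the unordered data, $s(Z_{K+1,1})\sim\widehat{F}$ while $Q_{1-\alpha}(\widehat{F})$ is constant. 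The definition of the quantile gives $\PPst{s(Z_{K+1,1})\leq Q_{1-\alpha}(\widehat{F})}{\text{unordered data}}=\widehat{F}\big((-\infty,Q_{1-\alpha}(\widehat{F})]\big)\geq 1-\alpha$, and taking expectations over the unordered data finishes the proof.

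I expect the main obstacle to be making the conditioning step fully rigorous. One must argue carefully that (i) hierarchical symmetry of $\mathcal{A}$ makes $s$ measurable with respect to the $\sigma$-algebra generated by the unordered data; (ii) conditional on that $\sigma$-algebra the ordered tuple $(\tilde{Z}_1,\dots,\tilde{Z}_{K+1})$ is uniform over its hierarchical-permutation orbit, with the bookkeeping of possible ties among groups or within a group handled correctly; and (iii) the induced conditional law of $s(Z_{K+1,1})$ is exactly $\widehat{F}$, including the group-size weights $1/N_k$, which appear precisely because the uniform-point-within-a-uniform-group sampling is not the same as uniform sampling over all points. The initial reduction to $\tilde{Z}_{K+1}\in\tilde{C}$ and the quantile inequality $T\geq Q_{1-\alpha}(\widehat{F})$ are routine by comparison.
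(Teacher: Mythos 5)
Your proposal is correct and takes essentially the same route as the paper: the paper omits a separate proof of this theorem, stating that it follows by the argument used for Theorem~\ref{thm:HCP}, which relies on exactly your ingredients---the group-size-weighted empirical quantile with weights $\frac{1}{(K+1)N_k}$, the monotonicity step replacing the $+\infty$ mass by the test group's own scores, and hierarchical exchangeability (together with the hierarchical symmetry of $\mathcal{A}$, so the fitted score is invariant) to show the test score falls below that quantile with probability at least $1-\alpha$. The only cosmetic difference is that the paper carries out the symmetrization by averaging distributional identities over group and within-group permutations combined with a deterministic weighted-quantile coverage bound, rather than by conditioning on the unordered data and identifying the conditional law of $s(Z_{K+1,1})$ as $\widehat{F}$; these are equivalent formulations of the same exchangeability argument.
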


Similarly, we can achieve a bound for the second-moment coverage via full-conformal framework. We extend HCP$^2$ to this setting. 
Define $K^{\geq 2} = \sum_{k=1}^K \one{N_k\geq 2}$, and let
\begin{multline*}
\tilde{C} = \left\{\rule{0cm}{1.2cm}\tilde{z} \in \tilde{\mathcal{Z}} \right. : s^{\tilde{z}}(P_1\tilde{z}) \leq  Q_{1-\alpha^2}\left(\rule{0cm}{1.1cm}\sum_{\substack{k=1,\dots,K\\ N_k\geq 2}}\sum_{1 \leq i < i' \leq N_k}\frac{1}{(K^{\geq 2} +1){N_k\choose 2}}\cdot\delta_{\min\{s^{\tilde{z}}(Z_{k,i}),s^{\tilde{z}}(Z_{k,i'})\}} \right.\\ \left.
+\frac{1}{K^{\geq 2} +1}\cdot\delta_{+\infty}\rule{0cm}{1.1cm}\right) \left. \rule{0cm}{1.2cm}\right\},
\end{multline*}
and then again define $\Ch(X_{\test})$ as in~\eqref{eqn:HCP_fullCP}.  Again, as above, this construction
is computationally extremely expensive and is therefore primarily of theoretical interest.

\begin{theorem}\label{thm:HCP2_full_conformal}
Assume the training data is drawn from the i.i.d.\ model with repeated measurements~\eqref{eqn:model_repeated},
independently for $k=1,\dots,K$, and that the test point $(X_{\test},Y_{\test})$ is drawn
independently from the model~\eqref{eqn:model_repeated_test_point}. Given a symmetric algorithm $\mathcal{A}$, the prediction set $\Ch(X_{\textnormal{test}})$ defined above satisfies
\[\Ep{P_X^{\geq 2}}{\alpha_{\mathcal{D}}(X_{\test})^2} \leq \alpha^2.\]
\end{theorem}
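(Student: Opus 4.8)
The plan is to adapt the argument behind the split-conformal result (Theorem~\ref{thm:HCP2}) to the full-conformal construction; the only genuinely new difficulty is that here both the fitted score function and the quantile threshold depend on the very test point being scored, and this will be neutralized using the symmetry of $\mathcal{A}$. Extend the data to $K+1$ i.i.d.\ groups drawn from~\eqref{eqn:model_repeated}, with $\tilde{Z}_1,\dots,\tilde{Z}_K=\mathcal{D}$ and $(X_{\test},Y_{\test})=(X_{K+1},Y_{K+1,1})$, and set $s=\mathcal{A}(\tilde{Z}_1,\dots,\tilde{Z}_{K+1})$; by (hierarchical) symmetry, $s$ is invariant under permuting the groups and under permuting observations within any fixed group. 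Let $T$ denote the threshold $Q_{1-\alpha^2}(\cdot)$ appearing in the definition of $\tilde{C}$, evaluated with this $s$. Since $\alpha_{\mathcal{D}}(x)=\PPst{(x,Y)\notin P_1\tilde{C}}{\mathcal{D},X_{\test}=x}$ for $Y\sim P_{Y|X=x}$, squaring (using conditional independence of two fresh draws given $\mathcal{D}$) and noting that drawing $X_{\test}\sim P_X^{\geq2}$ together with two i.i.d.\ responses is distributionally identical to conditioning a fresh group on $N_{K+1}\geq2$ and reading off its first two observations, I would first rewrite the target as $\Ep{P_X^{\geq2}}{\alpha_{\mathcal{D}}(X_{\test})^2}=\PPst{(X_{K+1},Y_{K+1,1})\notin P_1\tilde{C},\ (X_{K+1},Y_{K+1,2})\notin P_1\tilde{C}}{N_{K+1}\geq2}$.

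The crux is to bound this by $\PPst{\min\{s(Z_{K+1,1}),s(Z_{K+1,2})\}>T}{N_{K+1}\geq2}$. If $(X_{K+1},Y_{K+1,1})\notin P_1\tilde{C}$ then in particular $\tilde{Z}_{K+1}\notin\tilde{C}$ (since $P_1\tilde{Z}_{K+1}=(X_{K+1},Y_{K+1,1})$), and because $s^{\tilde{Z}_{K+1}}=\mathcal{A}(\tilde{Z}_1,\dots,\tilde{Z}_{K+1})=s$ this is exactly the statement $s(Z_{K+1,1})>T$. Likewise, letting $\tilde{Z}_{K+1}'$ be the group obtained from $\tilde{Z}_{K+1}$ by swapping its first two entries, hierarchical symmetry of $\mathcal{A}$ forces $s^{\tilde{Z}_{K+1}'}=s$, so the threshold governing the membership test for $\tilde{Z}_{K+1}'$ is again $T$; hence $(X_{K+1},Y_{K+1,2})\notin P_1\tilde{C}$ implies $\tilde{Z}_{K+1}'\notin\tilde{C}$, i.e.\ $s(Z_{K+1,2})>T$. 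Intersecting the two events shows both scores exceed $T$, which is the claimed bound.

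Next I would symmetrize over the test group: averaging over within-group permutations of group $K+1$ (under which $s$ and $T$ are unchanged) replaces the fixed pair $(1,2)$ by a uniformly chosen pair, so the bound equals $\EEst{\nu_{K+1}((T,\infty))}{N_{K+1}\geq2}$, where $\nu_k$ is the empirical distribution of the $\binom{N_k}{2}$ pairwise minima $\min\{s(Z_{k,i}),s(Z_{k,i'})\}$ in group $k$. Condition now on the set $S\subseteq\{1,\dots,K+1\}$ of indices with $N_\cdot\geq2$; on $\{N_{K+1}\geq2\}$ we have $K+1\in S$ and $|S|=K^{\geq2}+1=:m$, the groups indexed by $S$ are i.i.d., and — since group permutations leave $s$ fixed and merely relabel the $\nu_k$ — the family $(\nu_k)_{k\in S}$ is exchangeable, while $T=Q_{1-\alpha^2}\bigl(\tfrac{1}{m}\sum_{j\in S\setminus\{K+1\}}\nu_j+\tfrac{1}{m}\delta_{+\infty}\bigr)$. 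The theorem then reduces to the elementary lemma: if $\nu_1,\dots,\nu_m$ are exchangeable random probability measures on $\R$ and $T=Q_{1-\beta}\bigl(\tfrac{1}{m}\sum_{j<m}\nu_j+\tfrac{1}{m}\delta_{+\infty}\bigr)$, then $\EE{\nu_m((T,\infty))}\le\beta$; applying it with $\beta=\alpha^2$ and $\nu_m=\nu_{K+1}$ and then integrating out $S$ yields $\Ep{P_X^{\geq2}}{\alpha_{\mathcal{D}}(X_{\test})^2}\le\alpha^2$.

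The lemma is proved by replacing the placeholder $\delta_{+\infty}$ with $\nu_m$ itself, which only lowers the quantile: writing $T'=Q_{1-\beta}(\tfrac{1}{m}\sum_{j\le m}\nu_j)$ (a finite, symmetric function of the $\nu_j$) we get $T'\le T$ and hence $\nu_m((T,\infty))\le\nu_m((T',\infty))$, after which conditioning on the unordered multiset $\{\nu_1,\dots,\nu_m\}$ fixes $T'$ and makes the distinguished index uniform, so $\EEst{\nu_m((T',\infty))}{\text{multiset}}=\tfrac{1}{m}\sum_j\nu_j((T',\infty))\le\beta$ by definition of the $(1-\beta)$-quantile. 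I expect the main obstacle to be the crux step of the second paragraph: disentangling the dependence of $\tilde{C}$ on the test point whose miscoverage is being controlled, which forces a combined use of the across-group and within-group parts of hierarchical symmetry of $\mathcal{A}$, together with the verification that one and the same threshold $T$ simultaneously governs coverage of $(X_{K+1},Y_{K+1,1})$ and of $(X_{K+1},Y_{K+1,2})$. Once that is in place, the passage to the pairwise-minimum measures and the quantile lemma proceed exactly as in the split-conformal case.
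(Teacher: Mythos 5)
Your proposal is correct and follows essentially the route the paper intends: the paper omits this proof, stating only that it follows the same arguments as Theorems~\ref{thm:HCP} and~\ref{thm:HCP2}, and your argument is exactly that adaptation---condition on the set of groups with $N_k\geq 2$, pass to the pairwise-minimum scores, and combine exchangeability with the weighted-quantile inequality, your quantile lemma playing the role of the paper's $q'_{1-\alpha^2}$ construction and the threshold comparison $T'\leq T$ mirroring the comparison with the $\delta_{+\infty}$-augmented quantile. The genuinely full-conformal point, namely that hierarchical symmetry of $\mathcal{A}$ makes $s^{\tilde{Z}_{K+1}}$ (and hence the threshold) invariant to within-group permutations of the test group, so that one and the same $T$ governs the membership tests for both $Y_{K+1,1}$ and $Y_{K+1,2}$, is identified and handled correctly.
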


We omit the proofs of these two theorems,
 as they follow from the same arguments as the proofs of Theorem~\ref{thm:HCP} and Theorem~\ref{thm:HCP2} for the split conformal 
 setting.
\subsection{Extending jackknife+}
\subsubsection{Background: jackknife+}
The jackknife+ \citep{barber2021predictive} (closely
related to cross-conformal prediction \citep{vovk2015cross,vovk2018cross}) offers a leave-one-out approach to the problem of distribution-free predictive inference. Like full conformal, the jackknife+ method provides a prediction set without
the loss of accuracy incurred by data splitting. The computational cost of jackknife+ is higher than for split conformal (requiring fitting $K$ many leave-one-group-out models),
but substantially lower than full conformal. 

Rather than working with an arbitrary score function, jackknife+ works specifically with the residual score (i.e., 
$s(x,y) = |y - \widehat\mu(x)|$ for some fitted model $\widehat\mu$), in the setting $\mathcal{Y}\subseteq\R$.
Let $\mathcal{A}: \mathcal{Z}^{n+1}\rightarrow\{\textnormal{functions $\mathcal{X}\rightarrow \R$}\}$, which
inputs a data set of size $n+1$ and returns a fitted regression function $\widehat\mu$; as for full conformal,
$\mathcal{A}$ is assumed to be symmetric in its input arguments. Let 
\[\widehat\mu_{-i} = \mathcal{A}\left(Z_1,\dots,Z_{i-1},Z_{i+1},\dots,Z_n\right)\]
be the $i$th leave-one-out model, and compute the $i$th leave-one-out residual as
\[R_i = |Y_i - \widehat\mu_{-i}(X_i)|.\]
Then the jackknife+ prediction interval is given by
\[\Ch(X_{\test}) = \left[ Q_\alpha'\left(\sum_{i=1}^n \frac{1}{n}\cdot\delta_{\widehat\mu_{-i}(X_{\test}) - R_i} + \frac{1}{n}\cdot\delta_{-\infty}\right), \ 
Q_{1-\alpha}\left(\sum_{i=1}^n \frac{1}{n}\cdot\delta_{\widehat\mu_{-i}(X_{\test}) + R_i} + \frac{1}{n}\cdot\delta_{+\infty}\right)\right],\] 
where for a distribution $P$ on $\R$, $Q_\alpha'(P) = \sup\{t\in\R: \Pp{T\sim P}{T\leq t} \leq \alpha\}$ (compare to the usual definition
of quantile, where we take the infimum rather than the supremum). 
For exchangeable training and test data, the jackknife+ offers marginal coverage at level $1-2\alpha$ (here the factor of 2 is unavoidable,
unless we make additional assumptions) \citep{barber2021predictive}.

\subsubsection{Hierarchical jackknife+}

Next, we extend jackknife+ to the hierarchical setting.
(HCP can also be extended to cross-conformal prediction \citep{vovk2015cross,vovk2018cross} in a similar way, but we omit the details here.)
While computationally more expensive than split conformal based methods, the hierarchical jackknife+ can nonetheless
be viewed as a practical alternative to the computationally infeasible full conformal based method.

To define the hierarchical jackknife+ method, we begin with an algorithm $\mathcal{A}: \tilde{\mathcal{Z}}^{K-1}\rightarrow \{\textnormal{functions $\mathcal{X}\rightarrow\R$}\}$,
which we again assume to be hierarchically symmetric, as for the case of full conformal.
For each $k=1,\dots,K$, define the leave-one-group-out model,
\[\widehat{\mu}_{-k} = \mathcal{A}\left(\tilde{Z}_1,\dots,\tilde{Z}_{k-1},\tilde{Z}_{k+1},\dots,\tilde{Z}_K\right),\]
and define the corresponding residuals for the $k$th group,
\[R_{k,i} = |Y_{k,i} - \widehat\mu_{-k}(X_{k,i})|,\]
for each $i=1,\dots,N_k$. 
The hierarchical jackknife+ prediction interval is then given by
\begin{multline}\label{eqn:HCP_jackknife+}
\Ch(X_{\test}) = \left[\rule{0cm}{0.8cm}Q_\alpha' \left(\sum_{k=1}^K \sum_{i=1}^{N_k} \frac{1}{(K+1)N_k}\cdot\delta_{\muhat_{-k}(X_{\test})-R_{k,i}}+\frac{1}{K+1}\cdot\delta_{-\infty}\right),\right. \\ \left.
Q_{1-\alpha}\left(\sum_{k=1}^K \sum_{i=1}^{N_k} \frac{1}{(K+1)N_k}\cdot \delta_{\muhat_{-k}(X_{\test})+R_{k,i}}+\frac{1}{K+1}\cdot\delta_{+\infty}\right)\rule{0cm}{0.8cm}\right].
\end{multline}

Our first theoretical result shows that this method
 provides a marginal coverage guarantee in the hierarchical data setting.
\begin{theorem}\label{thm:jackknife_marginal}
Suppose that $\tilde{Z}_1,\dots,\tilde{Z}_{K+1}$ satisfies the hierarchical exchangeability property (Definition~\ref{def:hier_exch}), and let  $Z_{\test}=(X_{\test},Y_{\test})=\tilde{Z}_{K+1,1}$. Given a hierarchically symmetric algorithm $\mathcal{A}$, the prediction set $\Ch(X_{\textnormal{test}})$ defined in~\eqref{eqn:HCP_jackknife+} satisfies
\[\PP{Y_{\test} \in \Ch(X_{\test})} \geq 1-2\alpha.\]
\end{theorem}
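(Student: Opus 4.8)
The plan is to reduce the hierarchical jackknife+ guarantee to the analysis of the standard jackknife+ by exploiting hierarchical exchangeability in the same way that the proof of Theorem~\ref{thm:HCP} does for the split conformal case. The key observation is that, for the residual score, membership $Y_{\test}\in\Ch(X_{\test})$ can be characterized through pairwise comparisons: writing $R_{K+1} = |Y_{\test} - \widehat\mu_{-(K+1)}(X_{\test})|$ (where $\widehat\mu_{-(K+1)} = \mathcal{A}(\tilde{Z}_1,\dots,\tilde{Z}_K)$ is the model fit on all $K$ training groups), one shows by the usual jackknife+ algebra that $Y_{\test}\notin\Ch(X_{\test})$ implies that the ``comparison event'' $R_{K+1} > |\,\widehat\mu_{-k}(X_{K+1}) - \widehat\mu_{-k}(X_{k,i})\,|$ — or more precisely the strict-inequality event appearing in the jackknife+ proof of \citet{barber2021predictive} — holds for more than a $(1-\alpha)$ fraction (with the appropriate group-size weights $\frac{1}{(K+1)N_k}$) of the calibration points $(k,i)$. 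The point is that the leave-one-group-out structure means $\widehat\mu_{-k}$ treats groups $k$ and $K+1$ symmetrically, since by hierarchical symmetry of $\mathcal{A}$ it depends only on the multiset $\{\tilde Z_j : j\neq k, j\le K+1\}$.

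First I would make this reduction precise: define, for each group $k\in\{1,\dots,K+1\}$, the leave-group-$k$-out model $\widehat\mu_{-k}$ fit on the remaining $K$ groups, and for each ordered pair of distinct groups $(k,\ell)$ and each $i\le N_\ell$ define a comparison indicator $A_{k,\ell,i} = \One{|Y_{\ell,i}-\widehat\mu_{-k}(X_{\ell,i})| > |Y_{k,1}-\widehat\mu_{-k}(X_{k,1})|}$ plus the analogous events with $\widehat\mu_{-k}$ replaced on the test side — i.e.\ exactly the events that drive the standard jackknife+ bound. Then I would show the deterministic implication: if $Y_{K+1,1}\notin\Ch(X_{\test})$ then the weighted count $\sum_{k=1}^K\sum_{i=1}^{N_k}\frac{1}{(K+1)N_k}A_{K+1,k,i}$ (summing the test-point-versus-training-point comparisons) exceeds $\alpha$, and symmetrically a lower-endpoint version; this is just the one-sided jackknife+ set-membership characterization applied with the group-size weights, and follows from the same elementary argument as in \citet{barber2021predictive} once one checks that $\sum_i \frac{1}{(K+1)N_k} = \frac{1}{K+1}$ distributes the mass of each group uniformly.

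Next I would bound the probability. Consider the symmetric double-indexed array of events over all pairs of the $K+1$ groups. By hierarchical exchangeability, the joint law of $(\tilde Z_1,\dots,\tilde Z_{K+1})$ is invariant under (i) permuting the groups and (ii) permuting observations within each group; since $\mathcal{A}$ is hierarchically symmetric, the relevant comparison events transform equivariantly under these permutations. In particular, for the ``bad'' event — more than a $(1-\alpha)$-fraction of test-vs-training comparisons going the wrong way — the standard counting argument (as in the proof that at most a $2\alpha$ fraction of rows of a skew-symmetric comparison matrix can be ``strange'') shows that the probability the designated test group $K+1$ is strange is at most $2\alpha$; exchangeability of the groups lets us average over which group plays the role of the test group. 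One subtlety to record: the within-group exchangeability is used to argue that singling out the first observation $\tilde Z_{k,1}$ of group $k$ as its ``representative'' is harmless, so that the comparison really is between a uniformly-random observation of group $k$ and a uniformly-random observation of group $K+1$, matching the weights in~\eqref{eqn:HCP_jackknife+}.

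The main obstacle I anticipate is the bookkeeping around the weights and the asymmetry between ``a whole calibration group contributes $N_k$ comparison points'' versus ``the test group contributes only its first observation.'' In the flat jackknife+ proof every data point plays a symmetric role; here I must verify that replacing the single comparison $A_{k,\ell}$ by the averaged comparison $\frac{1}{N_\ell}\sum_i A_{k,\ell,i}$ still yields a skew-symmetric (in expectation, after the group permutation) array to which the $\le 2\alpha$ fraction-of-strange-rows lemma applies — and that the $\frac{1}{K+1}\delta_{\pm\infty}$ terms in~\eqref{eqn:HCP_jackknife+} correctly account for the self-comparison of the test group. Once the array is set up so that row $k$'s weighted strangeness count is what appears in the quantile, the conclusion $\PP{Y_{\test}\in\Ch(X_{\test})}\ge 1-2\alpha$ follows by the same argument as in \citet{barber2021predictive}, now conditionally on nothing extra, using only Definition~\ref{def:hier_exch} and hierarchical symmetry of $\mathcal{A}$.
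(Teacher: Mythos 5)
Your overall strategy---adapting the jackknife+ comparison-array argument with group-size weights and invoking both levels of hierarchical exchangeability---is the same as the paper's, but as written there is a genuine gap at the heart of the argument: you never introduce the leave-\emph{two}-group-out models. The paper's proof defines $\tilde\mu_{-(k,k')} = \mathcal{A}\big((\tilde Z_\ell)_{\ell\in[K+1]\setminus\{k,k'\}}\big)$ on the augmented data and compares residuals of observations in groups $k$ and $k'$ under this \emph{common} model; this is what makes the observation-level array $A_{(k,i),(k',i')}=\One{R_{(k,i),k'}>R_{(k',i'),k}}$ deterministically skew-symmetric ($A_{(k,i),(k',i')}+A_{(k',i'),(k,i)}\le 1$), so the counting argument bounds the weighted mass of ``strange'' observations by $2\alpha$ pathwise, and it is also what makes the array an equivariant function of the unordered collection of groups, so hierarchical exchangeability transfers that deterministic bound to $\PP{(K+1,1)\in S(A)}\le 2\alpha$. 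Crucially, $\tilde\mu_{-(k,K+1)}$ coincides with the method's $\widehat\mu_{-k}$, which is what connects the array to the event $Y_{\test}\notin\Ch(X_{\test})$. Your indicators $A_{k,\ell,i}=\One{|Y_{\ell,i}-\widehat\mu_{-k}(X_{\ell,i})|>|Y_{k,1}-\widehat\mu_{-k}(X_{k,1})|}$ instead use a single leave-one-group-out model indexed by the row group (and, as you define it, fit on the remaining $K$ groups of the augmented collection, hence including the test group when $k\le K$---not a model the method ever fits); with different models on the two sides of each unordered pair there is no pointwise bound on $A_{k,\ell,\cdot}+A_{\ell,k,\cdot}$, and the fraction-of-strange-rows lemma does not apply.

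A second, related problem is the group-level bookkeeping with a distinguished ``representative'' observation. Even with the correct leave-two-out models, an array whose rows are groups and whose $(k,\ell)$ entry compares the residual of $(k,1)$ against all residuals of group $\ell$ is not skew-symmetric in the needed sense: if both representatives have large residuals while the remaining within-group residuals are small, the two directed entries can sum to nearly $2$, so the deterministic ``at most $2\alpha$ strange rows'' count fails; replacing it by skew-symmetry ``in expectation'' does not rescue this, since the counting step must hold deterministically \emph{before} exchangeability is invoked. The paper instead indexes the array by observations $(k,i)$ with weights $\tfrac{1}{(K+1)N_k}$, proves $s(A)\le 2\alpha$ pathwise at that level, and only afterwards uses within-group exchangeability (to equate $\PP{(K+1,1)\in S(A)}$ with the average over $i$ of $\PP{(K+1,i)\in S(A)}$, conditionally on $N_{K+1}$) and across-group exchangeability (to equate group $K+1$'s average with any group's), yielding $\PP{(K+1,1)\in S(A)}=\EE{s(A)}\le 2\alpha$; finally, non-coverage of $Y_{K+1,1}$ is shown to imply $(K+1,1)\in S(A)$ via the quantile definitions. (Also, your displayed test comparison event $R_{K+1}>|\widehat\mu_{-k}(X_{K+1})-\widehat\mu_{-k}(X_{k,i})|$, with $R_{K+1}$ computed from $\widehat\mu_{-(K+1)}$, is not the event that drives the bound: the relevant comparison is between $|Y_{K+1,1}-\widehat\mu_{-k}(X_{K+1,1})|$ and $R_{k,i}$, both under $\widehat\mu_{-k}$.) So the proposal needs the leave-two-group-out construction and the observation-level array to go through.
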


We can also obtain a second-moment coverage bound by extending HCP$^2$ to the jackknife+.
Define
{\small\begin{multline}\label{eqn:HCP2_jackknife+}
\Ch(X_{\test}) = \left[\rule{0cm}{0.8cm}Q_{\alpha^2}' \left(\sum_{\substack{k=1,\dots,K\\ N_k\geq 2}} \sum_{1 \leq i < i' \leq N_k} \frac{1}{(K^{\geq 2}+1){N_k\choose 2}}\cdot\delta_{\muhat_{-k}(X_{\test})-\min\{R_{k,i},R_{k,i'}\}}+\frac{1}{K^{\geq 2}+1}\cdot\delta_{-\infty}\right),\right. \\ \left.
Q_{1-\alpha^2}\left(\sum_{\substack{k=1,\dots,K\\ N_k\geq 2}} \sum_{1 \leq i < i' \leq N_k} \frac{1}{(K^{\geq 2}+1){N_k\choose 2}}\cdot \delta_{\muhat_{-k}(X_{\test})+\min\{R_{k,i},R_{k,i'}\}}+\frac{1}{K^{\geq 2}+1}\cdot\delta_{+\infty}\right)\rule{0cm}{0.8cm}\right].
\end{multline} 
}

\begin{theorem}\label{thm:jackknife_stronger}
Assume the training data is drawn from the i.i.d.\ model with repeated measurements~\eqref{eqn:model_repeated},
independently for $k=1,\dots,K$, and that the test point $(X_{\test},Y_{\test})$ is drawn
independently from the model~\eqref{eqn:model_repeated_test_point}. Given a symmetric algorithm $\mathcal{A}$, the prediction set $\Ch(X_{\textnormal{test}})$ defined in~\eqref{eqn:HCP2_jackknife+} satisfies
\[\Ep{P_X^{\geq 2}}{\alpha_{\mathcal{D}}(X_{\test})^2} \leq 4\alpha^2.\]
\end{theorem}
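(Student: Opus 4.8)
The plan is to combine the phantom-copy reduction behind Theorem~\ref{thm:HCP2} with the leave-two-out comparison argument behind Theorem~\ref{thm:jackknife_marginal}. First I would enlarge the test model by drawing $N_\test\mid X_\test\sim P_{N\mid X}$ and $Y_{\test,1},\dots,Y_{\test,N_\test}\mid(X_\test,N_\test)\iidsim P_{Y\mid X}$ with $Y_\test=Y_{\test,1}$; then $\Ep{P_X^{\geq 2}}{\alpha_{\mathcal D}(X_\test)^2}=\EEst{\alpha_{\mathcal D}(X_\test)^2}{N_\test\geq 2}$, and since $\Ch(X_\test)$ in~\eqref{eqn:HCP2_jackknife+} depends only on the training groups and on $X_\test$, while $Y_{\test,1},Y_{\test,2}$ are conditionally i.i.d.\ given $(X_\test,N_\test)$, the claim reduces to showing
\[
\PPst{Y_{\test,1}\notin\Ch(X_\test)\ \text{and}\ Y_{\test,2}\notin\Ch(X_\test)}{N_\test\geq 2}\leq 4\alpha^2 .
\]

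\textbf{Step 1 (reduce both-miscoverage to one comparison event at level $2\alpha^2$).} Write $\Ch(X_\test)=[L,R]$ for the two quantiles in~\eqref{eqn:HCP2_jackknife+} and split the event that $Y_{\test,1},Y_{\test,2}$ both lie outside $[L,R]$ into the four cases indexed by whether each response lies below $L$ or above $R$. Treating the test point as group $K+1$, the key identity is $\widehat\mu_{-k}=\widehat\mu_{-(k,K+1)}$ for every training group $k$ (deleting group $k$ from the training data is the same as deleting groups $k$ and $K+1$ from all $K+1$ groups), so that, writing $R_{a,i}^{(b)}:=|Y_{a,i}-\widehat\mu_{-(a,b)}(X_{a,i})|$, both $R_{k,i}=R_{k,i}^{(K+1)}$ and $R_{K+1,j}^{(k)}=|Y_{\test,j}-\widehat\mu_{-k}(X_\test)|$ are residuals of the single model $\widehat\mu_{-(k,K+1)}$. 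Unfolding each quantile (keeping track of the $Q'$ versus $Q$ conventions, and of the degenerate case $L>R$) and using $\min\{R_{k,i},R_{k,i'}\}\leq|Y_{\test,j}-\widehat\mu_{-k}(X_\test)|$ on the relevant events, I would show that in every one of the four cases a weighted fraction at least $1-2\alpha^2$ of the triples $(k,i,i')$---over training groups $k$ with $N_k\geq 2$ and pairs $i<i'$, each carrying weight $\tfrac{1}{(K^{\geq 2}+1)\binom{N_k}{2}}$---satisfies
\[
\min\{R_{k,i}^{(K+1)},R_{k,i'}^{(K+1)}\}<\min\{R_{K+1,1}^{(k)},R_{K+1,2}^{(k)}\}.
\]
The level $2\alpha^2$ (rather than $\alpha^2$) is exactly the price of intersecting the two one-sided conditions, one for $Y_{\test,1}$ and one for $Y_{\test,2}$, each of which holds on a weighted fraction at least $1-\alpha^2$.

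\textbf{Step 2 (symmetrize and apply the jackknife+ combinatorial lemma).} By exchangeability of $Y_{\test,1},\dots,Y_{\test,N_\test}$ given $(X_\test,N_\test)$ I would replace ``observations $1,2$ of the test group'' by a uniformly random pair, turning the displayed inequality into an antisymmetric-up-to-ties comparison between the two size-$\geq 2$ groups $k$ and $K+1$ carried out with the shared model $\widehat\mu_{-(k,K+1)}$, the within-group averages reproducing the weights $\tfrac{1}{\binom{N_k}{2}}$ of~\eqref{eqn:HCP2_jackknife+}. Under the i.i.d.\ repeated-measurements model~\eqref{eqn:model_repeated}--\eqref{eqn:model_repeated_test_point}, the $K+1$ groups are i.i.d.; conditionally on the group sizes and on $N_\test\geq 2$ the size-$\geq 2$ groups (training plus test) remain exchangeable and $\mathcal A$ is symmetric, so group $K+1$ is exchangeable with the others. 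The weighted (``soft'') version of the jackknife+ comparison-matrix lemma used in the proof of Theorem~\ref{thm:jackknife_marginal} then bounds, for any $\beta\in(0,1)$, the probability that group $K+1$ is ``$\beta$-strange''---a weighted fraction at least $1-\beta$ of the comparisons going against it---by $2\beta$. Applying this with $\beta=2\alpha^2$ and combining with Step~1 yields $2\cdot 2\alpha^2=4\alpha^2$, as claimed.

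\textbf{Main obstacle.} The delicate point is Step~2: one must verify that the group-level comparison is genuinely exchangeable under permutations of the groups---which hinges on the two compared groups sharing the \emph{same} leave-two-out model, the structural feature of jackknife+---and that the combinatorial lemma underlying Theorem~\ref{thm:jackknife_marginal} survives verbatim in its weighted, variable-group-size form at the shifted level $2\alpha^2$, so that the jackknife+ factor of $2$ and the ``two one-sided conditions'' factor of $2$ multiply to $4$. Step~1 is mechanical but must be carried out carefully for the two opposite-sides cases and for empty intervals ($L>R$), where the naive decomposition $\alpha_{\mathcal D}\leq\alpha^L+\alpha^R$ into below-$L$ and above-$R$ miscoverage still holds but is no longer an equality.
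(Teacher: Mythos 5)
Your proposal is correct and follows essentially the same route as the paper's proof: augment the test point into a group with at least two responses, show that joint miscoverage of $Y_{\test,1},Y_{\test,2}$ forces the test pair to be ``$2\alpha^2$-strange'' in the weighted leave-two-groups-out comparison array (the intersection of two one-sided $1-\alpha^2$ conditions), and then apply the weighted jackknife+ strangeness bound $s(A)\leq 2\beta$ with $\beta=2\alpha^2$ together with exchangeability of the size-$\geq 2$ groups, before converting the both-miscovered probability into $\Ep{P_X^{\geq 2}}{\alpha_{\mathcal{D}}(X_{\test})^2}$. The ingredients you flag as delicate (the shared model $\widehat\mu_{-(k,K+1)}$, the $2\times 2$ factorization into $4\alpha^2$) are exactly the ones the paper's argument relies on.
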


We note that, as for the jackknife+ in the non-hierarchical setting,
the theoretical guarantees hold with $2\alpha$ in place of $\alpha$ (and, similarly, $4\alpha^2 = (2\alpha)^2$ in place of $\alpha^2$).
The proofs of these two theorems are given in Appendix~\ref{app:proof_jack_1} and~\ref{app:proof_jack_2}.

\section{Comparing HCP\texorpdfstring{$^2$}{} versus a trivial solution}\label{app:HCP_with_alpha2}

In Section~\ref{sec:trivial}, we discussed a possible trivial solution to the problem of second-moment coverage: simply 
run a method that guarantees marginal coverage, at level $1-\alpha^2$ in place of $1-\alpha$.
Here we  illustrate that this trivial solution is, as expected, highly impractical because it  inevitably provides
extremely wide prediction intervals; we  compare to HCP$^2$ which is able to keep intervals at roughly the same
width as a marginal-coverage method, in settings where the score function is chosen well.

To illustrate this, we  rerun the simulations of Section~\ref{section:simulation}, adding a third method:
HCP run with $\alpha^2$ in place of $\alpha$. All other details of the data and implementation are exactly the same
as in Section~\ref{section:simulation}.\footnote{Code to reproduce this simulation is available at \url{https://github.com/rebeccawillett/Distribution-free-inference-with-hierarchical-data}.} The results are shown in Figure~\ref{fig:sim_hist_residual_app},~\ref{fig:sim_hist_standardized_app}, and Table~\ref{table:marginal_coverage_app}.

\afterpage{\begin{figure}[H]
\begin{center}
\includegraphics[width=0.6\textwidth]{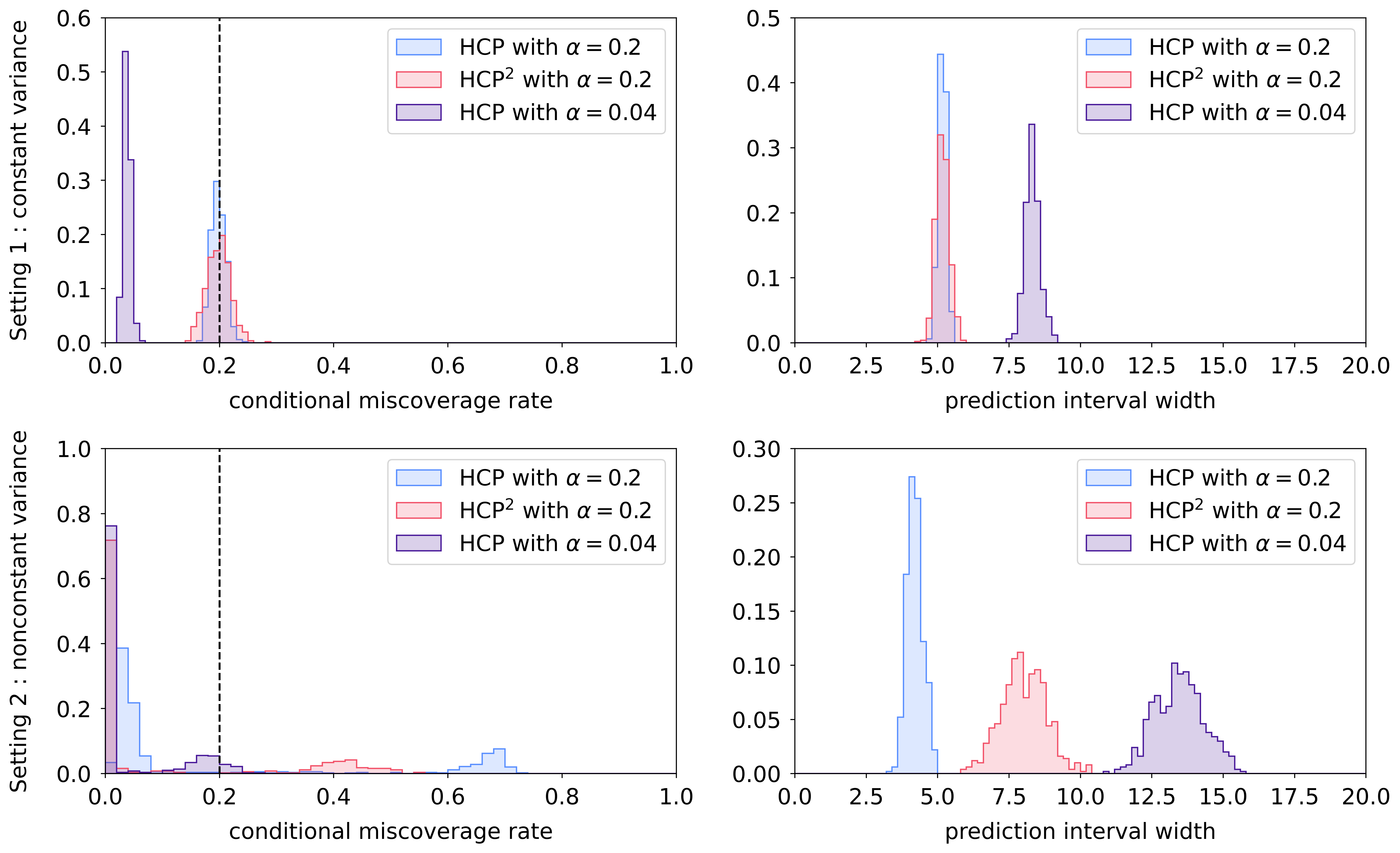}
\caption{Conditional miscoverage rates and widths of HCP and HCP$^2$ with $\alpha=0.2$ and HCP with $\alpha=0.04$, constructed via score $s(x,y) = |y - \widehat{\mu}(x)|$ .}
\label{fig:sim_hist_residual_app}
\end{center}
\end{figure}

\begin{figure}[H]
\begin{center}
\includegraphics[width=0.6\textwidth]{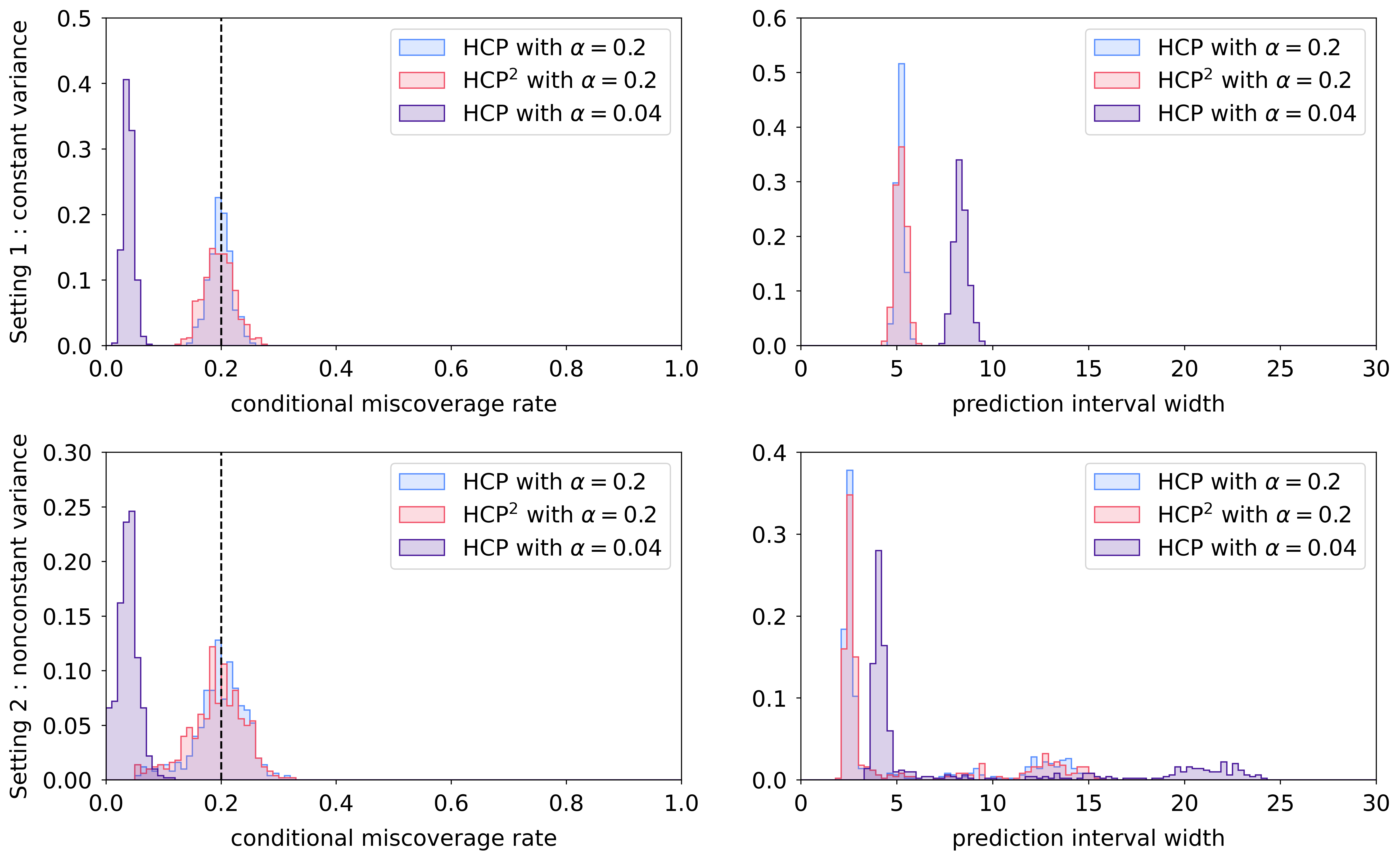}
\caption{Conditional miscoverage rates and widths of HCP and HCP$^2$ with $\alpha=0.2$ and HCP with $\alpha=0.04$, constructed via score $s(x,y) = |y - \widehat{\mu}(x)|/\widehat{\sigma}(x)$ .}
\label{fig:sim_hist_standardized_app}
\end{center}
\end{figure}

\begin{table}[h]
\begin{center} {\footnotesize
\begin{tabular}{cc|cc|cc}
\hline
&&\multicolumn{2}{c|}{$s(x,y) = |y - \widehat{\mu}(x)|$}&\multicolumn{2}{c}{$s(x,y) = |y - \widehat{\mu}(x)|/\widehat\sigma(x)$}\\
&&$\EE{\alpha_{\mathcal{D}}(X_{\test})}$ & $\EE{\alpha_{\mathcal{D}}(X_{\test})^2}$ &$\EE{\alpha_{\mathcal{D}}(X_{\test})}$ & $\EE{\alpha_{\mathcal{D}}(X_{\test})^2}$\\
\hline
\multirow{3}{*}{Setting 1} & HCP with $\alpha = 0.2$ & 0.1979 (0.0006)  & 0.0393 (0.0002)  &  0.1985 (0.0009)  & 0.0398 (0.0003) \\
& HCP$^2$ with $\alpha = 0.2$ & 0.1983 (0.0009)  & 0.0398 (0.0004)  & 0.1965 (0.0012)  & 0.0393 (0.0005) \\
& HCP with $\alpha = 0.04$ & 0.0385 (0.0003)  &  0.0015 (0.0000)  & 0.0391 (0.0004)  & 0.0016 (0.0000) \\
\hline
\multirow{3}{*}{Setting 2} & HCP with $\alpha = 0.2$ & 0.2017 (0.0118)  & 0.1108 (0.0084)  & 0.1982 (0.0020)  & 0.0414 (0.0008) \\
& HCP$^2$ with $\alpha = 0.2$ & 0.1011 (0.0079)  &  0.0413 (0.0034)  & 0.1917 (0.0022)  & 0.0391 (0.0008) \\
& HCP with $\alpha = 0.04$ & 0.0405 (0.0034)  &  0.0073 (0.0007)  & 0.0384 (0.0008)  & 0.0018 (0.0001) \\
\hline
\end{tabular} }
\end{center}
\caption{Mean miscoverage rate (target: $0.2$) and mean second-moment miscoverage rate (target: $0.2^2 = 0.04$), for HCP and HCP$^2$, with standard errors in parentheses. Results are averaged over 500 independent trials.}\label{table:marginal_coverage_app}
\end{table}}

As expected, we see that, for the three cases where the score function is a good match to the data distribution
(namely, the residual score $s(x,y) = |y-\widehat\mu(x)|$ for Setting 1, or the rescaled residual score
$s(x,y) = |y-\widehat\mu(x)|/\widehat\sigma(x)$ for Setting 1 and for Setting 2), HCP$^2$ provides intervals that are similar
in width to HCP, while running the trivial solution (HCP with $\alpha = 0.04$ in place of $\alpha=0.2$) leads to extremely
wide intervals. On the other hand, for the fourth case (the  residual score $s(x,y) = |y-\widehat\mu(x)|$ for Setting 2),
HCP$^2$ shows poor performance, but is still noticeably better than
simply running the trivial solution (HCP with $\alpha = 0.04$ in place of $\alpha=0.2$); the trivial solution is 
still overly conservative, although less so here than in the other three cases.

\section{Proofs}

\subsection{Proof of Theorem~\ref{thm:HCP}}
As described in Section~\ref{sec:intro_setting}, we can define $(X_{\test},Y_{\test}) = (X_{K+1,1},Y_{K+1,1})$,
so that the combined data set $ \tilde{Z}_1,\dots,\tilde{Z}_{K+1}$ satisfies hierarchical exchangeability (Definition~\ref{def:hier_exch}), 
where $\tilde{Z}_k = (Z_{k,1},\dots,Z_{k,N_k})$ for each $k=1,\dots,K+1$ and $Z_{k,i} = (X_{k,i},Y_{k,i})$ for each $i=1,\dots,N_k$.

From this point on, we  perform all calculations
conditional on the training data  $\tilde{Z}_{[K_0]} = (\tilde{Z}_1,\dots,\tilde{Z}_{K_0})$, so that we can treat the score function $s$ as fixed.
Note that, conditional on the training data $\tilde{Z}_{[K_0]}$,
the remaining data $\tilde{Z}_{K_0+1},\dots,\tilde{Z}_K,\tilde{Z}_{K+1}$ (i.e., the calibration and test groups) again 
satisfy hierarchical exchangeability (Definition~\ref{def:hier_exch}).

\paragraph{A new quantile function.} First, define a function
$q:\tilde{\mathcal{Z}}^{K_1+1}\rightarrow\R$ as
\[q_{1-\alpha}(\tilde{z}_1,\dots,\tilde{z}_{K_1+1}) = Q_{1-\alpha}\left(\sum_{k=1}^{K_1+1}\sum_{i=1}^{\textnormal{length}(\tilde{z}_k)}
\frac{1}{(K_1+1)\cdot \textnormal{length}(\tilde{z}_k)}\delta_{s(z_{k,i})}\right),\]
the empirical quantile of a given data set reweighted according to group size (to give equal weight to each group regardless
of length).
By definition of $q_{1-\alpha}(\tilde{Z}_{K_0+1},\dots,\tilde{Z}_{K+1})$ as a weighted quantile, it holds deterministically that
\begin{equation}\label{eqn:q_cover_lowerbd}\frac{1}{K_1+1} \sum_{k=K_0+1}^{K+1}\frac{1}{N_k}\sum_{i=1}^{N_k}\One{s(Z_{k,i})\leq q_{1-\alpha}(\tilde{Z}_{K_0+1},\dots,\tilde{Z}_{K+1})}
\geq 1-\alpha\end{equation}
(see, e.g., \citet[Lemma A1]{harrison2012conservative}), and furthermore, if $s(Z_{k,i})\neq s(Z_{k',i'})$ for any $k\neq k'$,
then
\begin{equation}\label{eqn:q_cover_upperbd}\frac{1}{K_1+1} \sum_{k=K_0+1}^{K+1}\frac{1}{N_k}\sum_{i=1}^{N_k}\One{s(Z_{k,i})\leq q_{1-\alpha}(\tilde{Z}_{K_0+1},\dots,\tilde{Z}_{K+1})}
\leq 1-\alpha + \frac{1}{K_1+1},\end{equation}
where this last step holds since the condition on the scores ensures that the weighted distribution places mass at most $\frac{1}{K_1+1}$ on any value.

Next, we examine the invariance properties of this function $q_{1-\alpha}$.
First, for any $\sigma\in\mathcal{S}_{K_1+1}$, we can verify that
\begin{equation}\label{eqn:q_perm_1}q_{1-\alpha}(\tilde{z}_1,\dots,\tilde{z}_{K_1+1}) = q_{1-\alpha}(\tilde{z}_{\sigma(1)},\dots,\tilde{z}_{\sigma(K_1+1)}) .\end{equation}
Next, for any $k$ consider a permutation $\sigma\in\mathcal{S}_{\textnormal{length}(\tilde{z}_k)}$,
and let $\tilde{z}_k^\sigma = (z_{k,\sigma(1)},\dots,z_{k,\sigma(\textnormal{length}(\tilde{z}_k))})$
be the corresponding permutation of $\tilde{z}_k$. We can see from the definition
that 
\begin{equation}\label{eqn:q_perm_2}q_{1-\alpha}(\tilde{z}_1,\dots,\tilde{z}_{K_1+1}) = q_{1-\alpha}(\tilde{z}_1,\dots,
\tilde{z}_{k-1},\tilde{z}_k^\sigma,\tilde{z}_{k+1},\dots,\tilde{z}_{K_1+1}).\end{equation}

\paragraph{Applying hierarchical exchangeability.}
Recall the definition of hierarchical exchangeability (Definition~\ref{def:hier_exch}): 
by the second part of the definition, for any permutation $\sigma\in\mathcal{S}_{N_{K+1}}$, we have
\[(\tilde{Z}_{K_0+1},\dots,\tilde{Z}_{K+1})\eqd
(\tilde{Z}_{K_0+1},\dots,\tilde{Z}_K,\tilde{Z}_{K+1}^\sigma)\]
conditional on both $N_{K+1}$ and on $\tilde{Z}_{[K_0]}$. Then, conditional on $N_{K+1}$ and $\tilde{Z}_{[K_0]}$, for any $i$,
\begin{multline*}\One{s(Z_{K+1,1})\leq q_{1-\alpha}(\tilde{Z}_{K_0+1},\dots,\tilde{Z}_{K+1})} 
\eqd \One{s(Z_{K+1,\sigma(1)})\leq q_{1-\alpha}(\tilde{Z}_{K_0+1},\dots,\tilde{Z}_K,\tilde{Z}_{K+1}^\sigma)}\\
=\One{s(Z_{K+1,\sigma(1)})\leq q_{1-\alpha}(\tilde{Z}_{K_0+1},\dots,\tilde{Z}_{K+1})} ,\end{multline*}
where the second step holds by~\eqref{eqn:q_perm_2}. In particular, by taking any $\sigma$ with $\sigma(1)=i$, 
\begin{multline*}\PPst{s(Z_{K+1,1})\leq q_{1-\alpha}(\tilde{Z}_{K_0+1},\dots,\tilde{Z}_{K+1})}{\tilde{Z}_{[K_0]},N_{K+1}}\\
=\PPst{s(Z_{K+1,i})\leq q_{1-\alpha}(\tilde{Z}_{K_0+1},\dots,\tilde{Z}_{K+1})}{\tilde{Z}_{[K_0]},N_{K+1}},\end{multline*}
for each $i=1,\dots,N_{K+1}$. After taking an average over all $i$, and then marginalizing over $N_{K+1}$,
\begin{multline*}\PPst{s(Z_{K+1,1})\leq q_{1-\alpha}(\tilde{Z}_{K_0+1},\dots,\tilde{Z}_{K+1})}{\tilde{Z}_{[K_0]}}\\
=\EEst{\frac{1}{N_{K+1}}\sum_{i=1}^{N_{K+1}}\One{s(Z_{K+1,i})\leq q_{1-\alpha}(\tilde{Z}_{K_0+1},\dots,\tilde{Z}_{K+1})}}{\tilde{Z}_{[K_0]}}.\end{multline*}

Furthermore, by the first part of Definition~\ref{def:hier_exch}, we see that
\[(\tilde{Z}_{K_0+1},\dots,\tilde{Z}_{K+1})\eqd (\tilde{Z}_{\sigma(K_0+1)},\dots,\tilde{Z}_{\sigma(K+1)})\]
holds conditional on $\tilde{Z}_{[K_0]}$, for any $\sigma$ that permutes $\{K_0+1,\dots,K+1\}$.
Thus, conditional on $\tilde{Z}_{[K_0]}$, it holds that
\begin{multline*}
\frac{1}{N_{K+1}}\sum_{i=1}^{N_{K+1}}\One{s(Z_{K+1,i})\leq q_{1-\alpha}(\tilde{Z}_{K_0+1},\dots,\tilde{Z}_{K+1})}\\
\eqd 
\frac{1}{N_{\sigma(K+1)}}\sum_{i=1}^{N_{\sigma(K+1)}}\One{s(Z_{\sigma(K+1),i})\leq q_{1-\alpha}(\tilde{Z}_{\sigma(K_0+1)},\dots,\tilde{Z}_{\sigma(K+1)})}\\
=\frac{1}{N_{\sigma(K+1)}}\sum_{i=1}^{N_{\sigma(K+1)}}\One{s(Z_{\sigma(K+1),i})\leq q_{1-\alpha}(\tilde{Z}_{K_0+1},\dots,\tilde{Z}_{K+1})}
\end{multline*}
where the second step applies~\eqref{eqn:q_perm_1}. Then for any $k\in\{K_0+1,\dots,K+1\}$, taking any $\sigma$ with $\sigma(K+1)=k$, we have
\begin{multline*}
\EEst{\frac{1}{N_{K+1}}\sum_{i=1}^{N_{K+1}}\One{s(Z_{K+1,i})\leq q_{1-\alpha}(\tilde{Z}_{K_0+1},\dots,\tilde{Z}_{K+1})}}{\tilde{Z}_{[K_0]}}\\
= \EEst{\frac{1}{N_k}\sum_{i=1}^{N_k}\One{s(Z_{k,i})\leq q_{1-\alpha}(\tilde{Z}_{K_0+1},\dots,\tilde{Z}_{K+1})}}{\tilde{Z}_{[K_0]}}\end{multline*}
and so, taking an average over all $k$,
\begin{multline*}
\EEst{\frac{1}{N_{K+1}}\sum_{i=1}^{N_{K+1}}\One{s(Z_{K+1,i})\leq q_{1-\alpha}(\tilde{Z}_{K_0+1},\dots,\tilde{Z}_{K+1})}}{\tilde{Z}_{[K_0]}}\\
= \EEst{\frac{1}{K_1+1} \sum_{k=K_0+1}^{K+1}\frac{1}{N_k}\sum_{i=1}^{N_k}\One{s(Z_{k,i})\leq q_{1-\alpha}(\tilde{Z}_{K_0+1},\dots,\tilde{Z}_{K+1})}}{\tilde{Z}_{[K_0]}}.\end{multline*}

Combining everything so far, we have shown that
\begin{multline*}
\PPst{s(Z_{K+1,1})\leq q_{1-\alpha}(\tilde{Z}_{K_0+1},\dots,\tilde{Z}_{K+1})}{\tilde{Z}_{[K_0]}}
\\ =  \EEst{\frac{1}{K_1+1} \sum_{k=K_0+1}^{K+1}\frac{1}{N_k}\sum_{i=1}^{N_k}\One{s(Z_{k,i})\leq q_{1-\alpha}(\tilde{Z}_{K_0+1},\dots,\tilde{Z}_{K+1})}}{\tilde{Z}_{[K_0]}}.\end{multline*}
Therefore, applying~\eqref{eqn:q_cover_lowerbd}, we have
\[\PPst{s(Z_{K+1,1})\leq q_{1-\alpha}(\tilde{Z}_{K_0+1},\dots,\tilde{Z}_{K+1})}{\tilde{Z}_{[K_0]}} \geq 1-\alpha.\]
Conversely, if $s(Z_{k,i})\neq s(Z_{k',i'})$ for all $k\neq k'$, almost surely, then by~\eqref{eqn:q_cover_upperbd},
\[\PPst{s(Z_{K+1,1})\leq q_{1-\alpha}(\tilde{Z}_{K_0+1},\dots,\tilde{Z}_{K+1})}{\tilde{Z}_{[K_0]}} \leq 1-\alpha + \frac{1}{K_1+1}.\]

\paragraph{Coverage guarantee: lower bound.} Now we need to relate the function $q_{1-\alpha}$ to the coverage properties of the HCP method.
Recalling the definition~\eqref{eqn:define_HCP} of the HCP prediction interval, we can see that
\[Q_{1-\alpha}\left(\sum_{k=K_0+1}^K\sum_{i=1}^{N_k}\frac{1}{(K_1+1)N_k}\cdot\delta_{s(Z_{k,i})}+\frac{1}{K_1+1}\cdot\delta_{+\infty}\right)
\geq q_{1-\alpha}(\tilde{Z}_{K_0+1},\dots,\tilde{Z}_{K+1}),\]
and so
\[  s(Z_{K+1,1})\leq q_{1-\alpha}(\tilde{Z}_{K_0+1},\dots,\tilde{Z}_{K+1})\quad \Rightarrow \quad Y_{\test}\in \Ch(X_{\test}).\]
Therefore, combining with the work above,
\[\PPst{ Y_{\test}\in \Ch(X_{\test})}{\tilde{Z}_{[K_0]}}
 \geq \PPst{s(Z_{K+1,1}) \leq q_{1-\alpha}(\tilde{Z}_{K_0+1},\dots,\tilde{Z}_{K+1})}{\tilde{Z}_{[K_0]}}
\geq 1-\alpha.\]

\paragraph{Coverage guarantee: upper bound.} Finally, we derive the upper bound on coverage, in the case that
 $s(Z_{k,i})\neq s(Z_{k',i'})$ for all $k\neq k'$, almost surely.
 First, we can observe that
\[
Q_{1-\alpha}\left(\sum_{k=K_0+1}^K\sum_{i=1}^{N_k}\frac{1}{(K_1+1)N_k}\cdot\delta_{s(Z_{k,i})} +\frac{1}{K_1+1}\cdot\delta_{+\infty}\right)
 \leq q_{1-\alpha'}(\tilde{Z}_{K_0+1},\dots,\tilde{Z}_{K+1}),\]
where $\alpha' = \alpha - \frac{1}{K_1+1}$.
Therefore, from the calculations above (applied with $\alpha'$ in place of $\alpha$),
\begin{multline*}\PPst{Y_{\test}\in\Ch(X_{\test})}{\tilde{Z}_{[K_0]}} \leq \PPst{s(Z_{K+1,1}) \leq q_{1-\alpha'}(\tilde{Z}_{K_0+1},\dots,\tilde{Z}_{K+1})}{\tilde{Z}_{[K_0]}}\\
\leq 1-\alpha' + \frac{1}{K_1+1} = 1-\alpha + \frac{2}{K_1+1}.\end{multline*}

\subsection{Proof of Theorem~\ref{thm:HCP2}}
The proof  largely follows the same structure as the proof of Theorem~\ref{thm:HCP}.

First, draw $(X_{K+1},N_{K+1})$ from the distribution of $(X,N)$ conditional on the event $N\geq 2$, and draw
$Y_{K+1,1},\dots,Y_{K+1,N_{K+1}}$ i.i.d.\ from the distribution of $Y\mid X$ at $X=X_{K+1}$.
Condition also on $\mathcal{S} = \{k\in\{K_0+1,\dots,K\} : N_k\geq 2\}$. 
Then, conditional on $\mathcal{S}$ and also on the training data $\tilde{Z}_{[K_0]}$, we see that
\[(\tilde{Z}_k : k\in \mathcal{S}\cup \{K+1\})\]
satisfies hierarchical exchangeability (Definition~\ref{def:hier_exch}).

\paragraph{A new quantile function.} Next, define $\tilde{\mathcal{Z}}^{\geq 2} = \mathcal{Z}^2\cup\mathcal{Z}^3\cup\dots$, and define a function
$q':(\tilde{\mathcal{Z}}^{\geq 2})^{K_1^{\geq 2}+1}\rightarrow\R$ as
\begin{multline*}q'_{1-\alpha^2}(\tilde{z}_1,\dots,\tilde{z}_{K_1^{\geq 2}+1})\\ = Q_{1-\alpha^2}\left(\sum_{k=1}^{K_1^{\geq2}+1}\sum_{1\leq i<i'\leq \textnormal{length}(\tilde{z}_k)}
\frac{1}{(K_1^{\geq2}+1)\cdot {\textnormal{length}(\tilde{z}_k)\choose 2}}\delta_{\min\{s(z_{k,i}),s(z_{k,i'})\}}\right).\end{multline*}
By definition of this weighted quantile, it holds deterministically that
{\small \begin{equation}\label{eqn:q2_cover_lowerbd}\frac{1}{K_1^{\geq2}+1} \sum_{k\in\mathcal{S}\cup\{K+1\}}\frac{1}{{N_k\choose 2}}\sum_{1\leq i<i'\leq N_k}\One{\min\{s(Z_{k,i}),s(Z_{k,i'})\}\leq q'_{1-\alpha^2}(\tilde{Z}_k : k\in \mathcal{S}\cup \{K+1\})}
\geq 1-\alpha\end{equation}}
(see, e.g., \citet[Lemma A1]{harrison2012conservative}).

Next, we examine invariance properties of this function $q'_{1-\alpha^2}$. Similarly to the proof of Theorem~\ref{thm:HCP}, 
 for any $\sigma\in\mathcal{S}_{K_1+1}$, we see that the value of $q'_{1-\alpha^2}(\tilde{Z}_k : k\in \mathcal{S}\cup \{K+1\})$
 is unchanged if we permute groups, or if we permute data points within any group.

\paragraph{Applying hierarchical exchangeability.}
Following a similar argument to the proof of Theorem~\ref{thm:HCP}, we can verify that, since
the data satisfies hierarchical exchangeability (Definition~\ref{def:hier_exch}), and the function $q'_{1-\alpha^2}$ satisfies the invariance properties
described above, 
we have
\begin{multline*}
\PPst{\min\{s(Z_{K+1,1}),s(Z_{K+1,2})\}\leq q'_{1-\alpha^2}(\tilde{Z}_k : k\in \mathcal{S}\cup \{K+1\})}{\tilde{Z}_{[K_0]},\mathcal{S}}
\\ =  \text{\small$\EEst{\frac{1}{K_1^{\geq 2}+1} \sum_{k\in\mathcal{S}\cup\{K+1\}}\frac{1}{{N_k\choose 2}}\sum_{1\leq i<i'\leq N_k}\!\!\!\One{\min\{s(Z_{k,i}),s(Z_{k,i'})\}\leq q'_{1-\alpha^2}(\tilde{Z}_k : k\in \mathcal{S}\cup \{K+1\})}}{\tilde{Z}_{[K_0]},\mathcal{S}}$}.\end{multline*}
Therefore, applying~\eqref{eqn:q2_cover_lowerbd}, we have
\[\PPst{\min\{s(Z_{K+1,1}),s(Z_{K+1,2})\}\leq q'_{1-\alpha^2}(\tilde{Z}_k : k\in \mathcal{S}\cup \{K+1\})}{\tilde{Z}_{[K_0]},\mathcal{S}} \geq 1-\alpha^2.\]

\paragraph{Coverage guarantee.} Now we need to relate the function $q'_{1-\alpha^2}$ to the coverage properties of the HCP$^2$ method.
For each $i=1,2$, we have
$Y_{K+1,i}\in \Ch(X_{K+1})$ if and only if \[s(Z_{K+1,i}) \leq 
Q_{1-\alpha^2}\left(\sum_{\substack{k=K_0+1,\dots,K\\ N_k\geq 2}}\sum_{1 \leq i < i' \leq N_k}\frac{1}{(K_1^{\geq 2} +1){N_k\choose 2}}\cdot\delta_{\min\{s(Z_{k,i}),s(Z_{k,i'})\}}+\frac{1}{K_1^{\geq 2}+1}\cdot\delta_{+\infty}\right),\]
by definition of the method. Deterministically, it holds that this quantile on the right hand side is $\geq q'_{1-\alpha^2}(\tilde{Z}_k : k\in \mathcal{S}\cup \{K+1\})$, and so,
\[s(Z_{K+1,i})\leq q'_{1-\alpha^2}(\tilde{Z}_k : k\in \mathcal{S}\cup \{K+1\}) \quad \Rightarrow \quad Y_{K+1,i}\in \Ch(X_{K+1}).\]
In particular,
\begin{multline*}\PPst{\{Y_{K+1,1}\in \Ch(X_{K+1})\}\cup \{Y_{K+1,2}\in \Ch(X_{K+1})\}}{\tilde{Z}_{[K_0]},\mathcal{S}}\\\geq
\PPst{\min\{s(Z_{K+1,1}),s(Z_{K+1,2})\} \leq q'_{1-\alpha^2}(\tilde{Z}_k : k\in \mathcal{S}\cup \{K+1\})}{\tilde{Z}_{[K_0]},\mathcal{S}}\geq 1-\alpha^2,\end{multline*}
where the last step holds by the work above. Equivalently,
\[\PPst{Y_{K+1,1},Y_{K+1,2}\not\in \Ch(X_{K+1})}{\tilde{Z}_{[K_0]},\mathcal{S}} \leq \alpha^2.\]
Next, since $Y_{K+1,1},Y_{K+1,2}$ are i.i.d.\ draws from the distribution $P_{Y|X}$ at $X=X_{K+1}$ (and are independent of the training and calibration data, by 
the hierarchical i.i.d.\ model), 
\begin{multline*}\PPst{Y_{K+1,1},Y_{K+1,2}\not\in \Ch(X_{K+1})}{\tilde{Z}_{[K_0]},\mathcal{S}}\\
= \EEst{\PPst{Y_{K+1,1},Y_{K+1,2}\not\in \Ch(X_{K+1})}{\tilde{Z}_{[K]},X_{K+1}}}{\tilde{Z}_{[K_0]},\mathcal{S}}\\
= \EEst{\PPst{Y_{K+1,1}\not\in \Ch(X_{K+1})}{\tilde{Z}_{[K]},X_{K+1}}^2}{\tilde{Z}_{[K_0]},\mathcal{S}}.\end{multline*}
Combining with the above, and marginalizing, we have
\[\alpha^2 \geq \EE{\PPst{Y_{K+1,1}\not\in \Ch(X_{K+1})}{\tilde{Z}_{[K]},X_{K+1}}^2} = 
\EE{\alpha_{\mathcal{D}}(X_{K+1})^2},\]
where the last step holds by definition of $\alpha_{\mathcal{D}}(x)$. Recalling that $X_{\test} = X_{K+1}$ by construction, this completes the proof of 
the upper bound on $\EE{\alpha_{\mathcal{D}}(X_{\test})^2}$.

\paragraph{The upper bound.}
In the case that $s(Z_{k,i})\neq s(Z_{k',i'})$ for all $k\neq k'$, almost surely,
proving that 
\[\EEst{\alpha_{\mathcal{D}}(X_{K+1})^2}{K_1^{\geq 2}} \geq \alpha^2 - \frac{2}{K_1^{\geq 2} + 1}\]
follows from an argument that is very similar to the proof of the analogous result in Theorem~\ref{thm:HCP}, so we omit the details.
Finally, we can verify that
\[\EE{ \frac{1}{K_1^{\geq 2} + 1}}  \leq \frac{1}{(K_1 + 1)\cdot p_{\geq 2}}\]
by using the fact that $K_1^{\geq 2}\sim\textnormal{Binomial}(K_1,p_{\geq 2})$.

\subsection{Proof of Theorem~\ref{thm:jackknife_marginal}}\label{app:proof_jack_1}

The proof extends the idea of the proof for the original jackknife+ \citep{barber2021predictive}. Similarly to the proof for the split conformal-based methods we look into the coverage for $Y_{K+1,1}$. For each $1 \leq k \neq k' \leq K+1$, let $\tilde{\mu}_{-(k,k')} =\tilde{\mu}_{-(k',k)} = \mathcal{A}((\tilde{Z}_\ell)_{\ell \in [K+1]\backslash\{k,k'\}})$ be the estimator from the expanded data set $\tilde{Z}_1,\dots, \tilde{Z}_K, \tilde{Z}_{K+1}$ after excluding the $k$-th and $k'$-th groups of observations, and define $R_{(k,i),k'}$ by
\begin{align*}
R_{(k,i),k'} = 
\begin{cases}
|Y_{k,i} - \mutil_{-(k,k')}(X_{k,i})|, &k\neq k'\\
+\infty, &k=k'
\end{cases}
\end{align*}
for $1 \leq i \leq N_k$. Next, define
\[A_{(k,i),(k',i')} = \One{R_{(k,i),k'} > R_{(k',i),k}},\]
and note that for any $(k,i),(k',i')$, we must have $A_{(k,i),(k',i')} +A_{(k',i'),(k,i)} \leq 1$.
Define also
\[A_{(k,i),\bullet} = \sum_{k'=1}^{K+1} \sum_{i'=1}^{N_{k'}} \frac{1}{(K+1)N_{k'}} \cdot A_{(k,i),(k',i')}.\]
Write $A$ to denote the array of all $A_{(k,i),(k',i')}$'s:
\[A = \{A_{(k,i),(k',i')} : 1 \leq k,k' \leq K+1, 1 \leq i \leq N_k, 1 \leq i' \leq N_{k'}\}.\]
Finally, define the set $S(A)$ by
\[S(A) = \{(k,i) : A_{(k,i),\bullet} \geq 1-\alpha\},\]
and let
\[s(A) = \sum_{(k,i) \in S(A)} \frac{1}{(K+1)N_k}.\]
To bound $s(A)$, we have
\begin{align*}
&(1-\alpha)s(A) = \sum_{(k,i)\in S(A)} \frac{1-\alpha}{(K+1)N_k}  \\
&\leq \sum_{(k,i)\in S(A)} \frac{1}{(K+1)N_k}A_{(k,i),\bullet}\\
&= \sum_{(k,i)\in S(A)} \frac{1}{(K+1)N_k} \sum_{k'=1}^{K+1}\sum_{i'=1}^{N_{k'}} \frac{1}{(K+1)N_{k'}} A_{(k,i),(k',i')}\\
&= \sum_{(k,i)\in S(A)}\sum_{(k',i')\in S(A)} \frac{1}{(K+1)N_k} \frac{1}{(K+1)N_{k'}} A_{(k,i),(k',i')}\\
&\hspace{1in}+ \sum_{(k,i)\in S(A)}\sum_{(k',i')\not\in S(A)} \frac{1}{(K+1)N_k} \frac{1}{(K+1)N_{k'}} A_{(k,i),(k',i')}\\
&= \frac{1}{2} \sum_{(k,i)\in S(A)}\sum_{(k',i')\in S(A)} \frac{1}{(K+1)N_k} \frac{1}{(K+1)N_{k'}} \left(A_{(k,i),(k',i')} + A_{(k',i'),(k,i)}\right)\\
&\hspace{1in}+ \sum_{(k,i)\in S(A)}\sum_{(k',i')\not\in S(A)} \frac{1}{(K+1)N_k} \frac{1}{(K+1)N_{k'}} A_{(k,i),(k',i')}\\
&\leq \frac{1}{2} \sum_{(k,i)\in S(A)}\sum_{(k',i')\in S(A)} \frac{1}{(K+1)N_k} \frac{1}{(K+1)N_{k'}} + \sum_{(k,i)\in S(A)}\sum_{(k',i')\not\in S(A)} \frac{1}{(K+1)N_k} \frac{1}{(K+1)N_{k'}} \\
&= - \frac{1}{2} \sum_{(k,i)\in S(A)}\sum_{(k',i')\in S(A)} \frac{1}{(K+1)N_k} \frac{1}{(K+1)N_{k'}} + \sum_{(k,i)\in S(A)}\sum_{k'=1}^{K+1}\sum_{i'=1}^{N_{k'}} \frac{1}{(K+1)N_k} \frac{1}{(K+1)N_{k'}} \\
&= - \frac{1}{2}\left(\sum_{(k,i)\in S(A)}\frac{1}{(K+1)N_k}\right)^2 + \sum_{(k,i)\in S(A)}\frac{1}{(K+1)N_k}\\
&=  - \frac{1}{2} s(A)^2 + s(A),
\end{align*}
which proves $s(A)\leq 2\alpha$.

Next, since $(\tilde{Z}_1,\dots,\tilde{Z}_{K+1})$  satisfies hierarchical exchangeability (Definition~\ref{def:hier_exch}), by a similar argument
as in the proof of Theorem~\ref{thm:HCP}, we have
\[\PPst{(K+1 ,1) \in S(A)}{N_{K+1} = m} = \PPst{(K+1 ,i) \in S(A)}{N_{K+1} = m}\]
for all $i=1,2,\dots, m$, where $m$ is any positive integer with $\PP{N_{K+1} = m}>0$, and
\[\EE{\frac{1}{N_{K+1}}\cdot \sum_{i=1}^{N_{K+1}} \One{(K+1,i) \in S(A)}} = \EE{\frac{1}{N_k}\cdot \sum_{i=1}^{N_k} \One{(k,i) \in S(A)}}\]
for all $k=1,2,\dots,K$. It follows that
\begin{align*}
\PP{(K+1,1) \in S(A)} &= \EE{\PPst{(K+1,1) \in S(A)}{N_{K+1}}}\\
&= \EE{\frac{1}{N_{K+1}}\cdot\sum_{i=1}^{N_{K+1}}\PPst{(K+1,i) \in S(A)}{N_{K+1}}}\\
&= \EE{\frac{1}{N_{K+1}}\cdot \sum_{i=1}^{N_{K+1}} \One{(K+1,i) \in S(A)}}\\
&= \EE{\frac{1}{K+1}\sum_{k=1}^{K+1} \frac{1}{N_k}\cdot \sum_{i=1}^{N_k} \One{(k,i) \in S(A)}}\\
&= \EE{\sum_{(k,i) \in S(A)}  \frac{1}{(K+1)N_k}}
= \EE{s(A)}
\leq 2\alpha,
\end{align*}
where the last inequality holds since we have shown that $s(A) \leq 2\alpha$ holds deterministically. 

Now suppose $Y_{K+1,1} \notin \Ch(X_{K+1,1})$. This implies
that either
\[Y_{K+1,1} < Q_\alpha' \left(\sum_{k=1}^K \sum_{i=1}^{N_k} \frac{1}{(K+1)N_k}\cdot\delta_{\muhat_{-k}(X_{K+1,1})-R_{k,i}}+\frac{1}{K+1}\cdot\delta_{-\infty}\right),\]
or
\[Y_{K+1,1} >  Q_{1-\alpha}\left(\sum_{k=1}^K \sum_{i=1}^{N_k} \frac{1}{(K+1)N_k}\cdot \delta_{\muhat_{-k}(X_{K+1,1})+R_{k,i}}+\frac{1}{K+1}\cdot\delta_{+\infty}\right).\]
From the definition of $Q_\alpha'$ and $Q_{1-\alpha}$, we then have
\[\sum_{k=1}^K \sum_{i=1}^{N_k} \frac{1}{(K+1)N_k}\cdot\One{\muhat_{-k}(X_{K+1,1})-R_{k,i} > Y_{K+1,1}}  \geq1-\alpha,\]
or
\[\sum_{k=1}^K \sum_{i=1}^{N_k} \frac{1}{(K+1)N_k}\cdot\One{\muhat_{-k}(X_{K+1,1})+R_{k,i} < Y_{K+1,1}}  \geq1- \alpha.\]
In either case, we have
\begin{align*}
1-\alpha &\leq  \sum_{k=1}^K \sum_{i=1}^{N_k} \frac{1}{(K+1)N_k}\cdot\One{ |Y_{K+1,1}-\muhat_{-k}(X_{K+1,1})| >R_{k,i}}\\
&= \sum_{k=1}^K \sum_{i=1}^{N_k} \frac{1}{(K+1)N_k}\cdot\One{R_{(K+1,1),k} > R_{(k,i),K+1}}\\
&= \sum_{k=1}^K \sum_{i=1}^{N_k}  \frac{1}{(K+1)N_k}\cdot A_{(K+1,1),(k,i)}
= \sum_{k=1}^{K+1} \sum_{i=1}^{N_k}  \frac{1}{(K+1)N_k}\cdot A_{(K+1,1),(k,i)}
= A_{(K+1,1),\bullet}.
\end{align*}
Hence, we have shown that $Y_{K+1,1} \notin \Ch(X_{K+1,1})$ implies $(K+1,1) \in S(A)$, and so
\[\PP{Y_{K+1,1} \notin \Ch(X_{K+1,1})} \leq \PP{(K+1,1) \in S(A)} \leq 2\alpha.\]

\subsection{Proof of Theorem~\ref{thm:jackknife_stronger}}\label{app:proof_jack_2}
For $1 \leq k\neq k' \leq K+1$ with $N_k, N_{k'} \geq 2$, $1 \leq i_1 < i_2 \leq N_k$, and $1 \leq i_1' < i_2' \leq N_{k'}$, define $\tilde{\mu}_{-(k,k')}$ as in the proof of Theorem~\ref{thm:jackknife_marginal}, and then define
\begin{align*}
R_{(k,i_1,i_2),k'} = 
\begin{cases}
\min\{|Y_{k,i_1} - \mutil_{-(k,k')}(X_i)|,|Y_{k,i_2} - \mutil_{-(k,k')}(X_i)|\}, & k \neq k'\\
+\infty, &k=k'
\end{cases}
\end{align*}
and
\[A_{(k,i_1,i_2),(k',i_1',i_2')} = \One{R_{(k,i_1,i_2),k'} > R_{(k',i_1',i_2'),k}},\]
and note that for any $(k,i_1,i_2),(k',i'_1,i'_2)$, we must have $A_{(k,i_1,i_2),(k',i_1',i_2')}  + A_{(k',i_1',i_2'),(k,i_1,i_2)} \leq 1$.
Next, let
\[S(A) = \{(k,i_1,i_2) : A_{(k,i_1,i_2),\bullet} \geq 1-2\alpha^2,1 \leq k \leq K+1, N_k \geq 2, 1 \leq i_1 < i_2 \leq N_k\}\]
where
\[A_{(k,i_1,i_2),\bullet} = \sum_{\substack{1 \leq k'\leq K+1 \\ N_{k'} \geq 2}} \sum_{1 \leq i_1' < i_2' \leq N_{k'}} \frac{1}{{\tilde{K}^{\geq 2}}{N_{k'} \choose 2}}\cdot A_{(k,i_1,i_2),(k',i_1',i_2')}\]
and 
\[\tilde{K}^{\geq 2} = \sum_{k=1}^{K+1} \One{N_k \geq 2},\]
which we assume to be nonzero.
Let
\[s(A) = \sum_{(k,i_1,i_2) \in S(A)} \frac{1}{{\tilde{K}^{\geq 2}}{N_k \choose 2}}.\]
We then calculate
\begin{align*}
&(1-2\alpha^2)s(A) = \sum_{(k,i_1,i_2)\in S(A)} \frac{1-2\alpha^2}{{\tilde{K}^{\geq 2}}{N_k \choose 2}}  \\
&\leq \sum_{(k,i_1,i_2)\in S(A)} \frac{1}{{\tilde{K}^{\geq 2}}{N_k \choose 2}} A_{(k,i_1,i_2),\bullet}\\
&= \sum_{(k,i_1,i_2)\in S(A)} \frac{1}{{\tilde{K}^{\geq 2}}{N_k \choose 2}}  \sum_{\substack{1 \leq k'\leq K+1 \\ N_{k'} \geq 2}} \sum_{1 \leq i_1' < i_2' \leq N_{k'}} \frac{1}{{\tilde{K}^{\geq 2}}{N_{k'} \choose 2}}\cdot A_{(k,i_1,i_2),(k',i_1',i_2')}\\
&= \sum_{(k,i_1,i_2)\in S(A)} \sum_{(k',i'_1,i'_2)\in S(A)} \frac{1}{{\tilde{K}^{\geq 2}}{N_k \choose 2}}  \frac{1}{{\tilde{K}^{\geq 2}}{N_{k'} \choose 2}} A_{(k,i_1,i_2),(k',i_1',i_2')} \\ 
&\hspace{1in}
+  \sum_{(k,i_1,i_2)\in S(A)} \sum_{\substack{(k',i'_1,i'_2)\not\in S(A)\\N_{k'}\geq 2}} \frac{1}{{\tilde{K}^{\geq 2}}{N_k \choose 2}}  \frac{1}{{\tilde{K}^{\geq 2}}{N_{k'} \choose 2}} A_{(k,i_1,i_2),(k',i_1',i_2')}\\
&= \frac{1}{2}\sum_{(k,i_1,i_2)\in S(A)} \sum_{(k',i'_1,i'_2)\in S(A)} \frac{1}{{\tilde{K}^{\geq 2}}{N_k \choose 2}}  \frac{1}{{\tilde{K}^{\geq 2}}{N_{k'} \choose 2}} \left(A_{(k,i_1,i_2),(k',i_1',i_2')}  + A_{(k',i_1',i_2'),(k,i_1,i_2)} \right)\\ 
&\hspace{1in}
+  \sum_{(k,i_1,i_2)\in S(A)} \sum_{\substack{(k',i'_1,i'_2)\not\in S(A)\\N_{k'}\geq 2}} \frac{1}{{\tilde{K}^{\geq 2}}{N_k \choose 2}}  \frac{1}{{\tilde{K}^{\geq 2}}{N_{k'} \choose 2}} A_{(k,i_1,i_2),(k',i_1',i_2')}\\
&\leq \frac{1}{2}\sum_{(k,i_1,i_2)\in S(A)} \sum_{(k',i'_1,i'_2)\in S(A)} \frac{1}{{\tilde{K}^{\geq 2}}{N_k \choose 2}}  \frac{1}{{\tilde{K}^{\geq 2}}{N_{k'} \choose 2}} 
+  \sum_{(k,i_1,i_2)\in S(A)} \sum_{\substack{(k',i'_1,i'_2)\not\in S(A)\\N_{k'}\geq 2}} \frac{1}{{\tilde{K}^{\geq 2}}{N_k \choose 2}}  \frac{1}{{\tilde{K}^{\geq 2}}{N_{k'} \choose 2}} \\
&=- \frac{1}{2}\sum_{(k,i_1,i_2)\in S(A)} \sum_{(k',i'_1,i'_2)\in S(A)} \frac{1}{{\tilde{K}^{\geq 2}}{N_k \choose 2}}  \frac{1}{{\tilde{K}^{\geq 2}}{N_{k'} \choose 2}} 
+  \sum_{(k,i_1,i_2)\in S(A)} \sum_{\substack{1 \leq k'\leq K+1 \\ N_{k'} \geq 2}} \sum_{1 \leq i_1' < i_2' \leq N_{k'}}\frac{1}{{\tilde{K}^{\geq 2}}{N_k \choose 2}}  \frac{1}{{\tilde{K}^{\geq 2}}{N_{k'} \choose 2}} \\
&=- \frac{1}{2}\left(\sum_{(k,i_1,i_2)\in S(A)}  \frac{1}{{\tilde{K}^{\geq 2}}{N_k \choose 2}}  \right)^2
+  \sum_{(k,i_1,i_2)\in S(A)} \frac{1}{{\tilde{K}^{\geq 2}}{N_k \choose 2}} \\
&=  - \frac{1}{2} s(A)^2 + s(A),
\end{align*}
which proves $s(A)\leq 4\alpha^2$.

Next, by an analogous argument as in the proof of Theorem~\ref{thm:jackknife_marginal},
the hierarchical exchangeability of the data implies that
\begin{multline*}
\PPst{(K+1,1,2) \in S(A)}{N_{K+1} \geq 2}\\
= \EEst{\frac{1}{\tilde{K}^{\geq 2}}\sum_{\substack{1 \leq k\leq K+1 \\ N_k \geq 2}} \frac{1}{{N_k\choose 2}}\cdot \sum_{1 \leq i_1 < i_2 \leq N_k} \One{(k,i_1,i_2) \in S(A)}}{N_{K+1} \geq 2} \leq 4\alpha^2,\end{multline*}
where the last step holds by our deterministic bound on $s(A)$ calculated above.
Our next step is to verify that
\begin{equation}\label{eqn:step_both_Ys}Y_{K+1,1},Y_{K+1,2} \notin \Ch(X_{K+1}) \quad \Rightarrow \quad (K+1,1,2)\in S(A).\end{equation}
To see this, by definition of $\Ch(X_{K+1})$ we see that, for each $i=1,2$, $Y_{K+1,i}\not\in \Ch(X_{K+1})$ implies that
\[1-\alpha^2 \leq \sum_{\substack{1\leq k\leq K\\ N_k\geq 2}} \sum_{1\leq i_1<i_2\leq N_k} \frac{1}{\tilde{K}^{\geq 2}{N_k\choose 2}}\cdot\One{ |Y_{K+1,i} -  \muhat_{-k}(X_{K+1})|> \min\{R_{k,i_1},R_{k,i_2}\}}.\]
Summing over $i=1,2$, then, we have
\begin{align*}
&2(1-\alpha^2)\\
&\leq \sum_{\substack{1\leq k\leq K\\ N_k\geq 2}} \sum_{1\leq i_1<i_2\leq N_k} \frac{1}{\tilde{K}^{\geq 2}{N_k\choose 2}}\cdot \sum_{i=1,2}\One{ |Y_{K+1,i} -  \muhat_{-k}(X_{K+1})|> \min\{R_{k,i_1},R_{k,i_2}\}}\\
&\leq \sum_{\substack{1\leq k\leq K\\ N_k\geq 2}} \sum_{1\leq i_1<i_2\leq N_k} \frac{1}{\tilde{K}^{\geq 2}{N_k\choose 2}}\cdot \left( 1 + \One{ \min_{i=1,2}|Y_{K+1,i} -  \muhat_{-k}(X_{K+1})|> \min\{R_{k,i_1},R_{k,i_2}\}}\right)\\
&\leq 1 + \sum_{\substack{1\leq k\leq K\\ N_k\geq 2}} \sum_{1\leq i_1<i_2\leq N_k} \frac{1}{\tilde{K}^{\geq 2}{N_k\choose 2}}\cdot  \One{ \min_{i=1,2}|Y_{K+1,i} -  \muhat_{-k}(X_{K+1})|> \min\{R_{k,i_1},R_{k,i_2}\}}\\
&= 1 + \sum_{\substack{1\leq k\leq K\\ N_k\geq 2}} \sum_{1\leq i_1<i_2\leq N_k} \frac{1}{\tilde{K}^{\geq 2}{N_k\choose 2}}\cdot A_{(K+1,1,2),(k,i_1,i_2)}\\
&= 1 + A_{(K+1,1,2),\bullet}.
\end{align*}
This verifies~\eqref{eqn:step_both_Ys}, and so we have
\[\PPst{Y_{K+1,1},Y_{K+1,2} \notin \Ch(X_{K+1})}{N_{K+1} \geq 2} \leq \PPst{(K+1,1,2) \in S(A)}{N_{K+1} \geq 2} \leq 4 \alpha^2.\]

To  complete the proof, we calculate
\begin{align*}
&\PPst{Y_{K+1,1},Y_{K+1,2} \notin \Ch(X_{K+1})}{N_{K+1} \geq 2}\\
= &\EEst{\PPst{Y_{K+1,1},Y_{K+1,2} \notin \Ch(X_{K+1})}{\tilde{Z}_{[K]}, X_{K+1}, N_{K+1} \geq 2}}{N_{K+1} \geq 2}\\
= &\EEst{\PPst{Y_{K+1,1},Y_{K+1,2} \notin \Ch(X_{K+1})}{\tilde{Z}_{[K]}, X_{K+1}}}{N_{K+1} \geq 2}\\
= &\EEst{\PPst{Y_{K+1,1} \notin \Ch(X_{K+1})}{\tilde{Z}_{[K]}, X_{K+1}}^2}{N_{K+1} \geq 2}\\
= &\EEst{\alpha_{\mathcal{D}}(X_{K+1})^2}{N_{K+1} \geq 2}\\
= &\Ep{P_X^{\geq 2}}{\alpha_{\mathcal{D}}(X_{K+1})^2},
\end{align*}
where the last two steps hold by definition of $\alpha_{\mathcal{D}}(X_{K+1})$ and of $P_X^{\geq 2}$. Therefore,
\[\Ep{P_X^{\geq 2}}{\alpha_{\mathcal{D}}(X_{\test})^2} = \Ep{P_X^{\geq 2}}{\alpha_{\mathcal{D}}(X_{K+1})^2} \leq 4\alpha^2.\]

\end{document}